\newtheorem{Thm}{Theorem}
\newtheorem{Def}{Definition}
\newtheorem{Prop}{Proposition}
\newtheorem{Lem}{Lemma}
\newtheorem{Coro}{Corollary}
\newtheorem{Rem}{Remark}
\newenvironment{proof}[1][Proof]{\textbf{#1.} }{\hfill $\square$}
\def \eps{\varepsilon}
\newcommand*{\cE}{\mathcal{E}}
\newcommand*{\cF}{\mathcal{F}}
\newcommand*{\cN}{\mathcal{N}}
\newcommand{\bD}{\mathbb{D}}
\newcommand{\bE}{\mathbb{E}}
\newcommand{\bF}{\mathbb{F}}
\newcommand{\bH}{\mathbb{H}}
\newcommand{\bL}{\mathbb{L}}
\newcommand{\bP}{\mathbb{P}}
\newcommand{\bQ}{\mathbb{Q}}
\newcommand{\bR}{\mathbb{R}}
\newcommand{\bS}{\mathbb{S}}
\newcommand{\mfd}{\mathfrak{d}}
\newcommand{\etamax}{\eta^\star}
\newcommand{\etamin}{\eta_\star}
\newcommand{\lambdamax}{\|\lambda\|}
\newcommand{\psius}{ \psi^\star}
\newcommand{\phius}{ \phi^\star}
\newcommand{\derivX}{\overset{ \text{{\Huge .}}}{X}}
\begin{document}

\title{Asymptotic approach for backward stochastic differential equation with singular terminal condition\thanks{This work began during the visit of P. Graewe to Le Mans. His stay was financially supported by the F\'ed\'eration de Recherche Math\'ematique des Pays de Loire FR CNRS 2962.}}

\author[1]{Paulwin Graewe\thanks{pgraewe@deloitte.de}}
\author[2]{Alexandre Popier\thanks{alexandre.popier@univ-lemans.fr}}
\affil[1]{\small Deloitte Consulting GmbH, Kurf\"urstendamm 23, 10719 Berlin, Germany}
\affil[2]{\small Laboratoire Manceau de Math\'ematiques, Le Mans Universit\'e, Avenue O.~Messiaen, 72085~Le~Mans cedex 9, France}

\date{\today}

\maketitle

\begin{abstract}
In this paper, we provide a one-to-one correspondence between the solution $Y$ of a BSDE with singular terminal condition and the solution $H$ of a BSDE with singular generator. This result provides the precise asymptotic behavior of $Y$ close to the final time and enlarges the uniqueness result to a wider class of generators. 
\end{abstract}

\vspace{0.5cm}
\noindent \textbf{2010 Mathematics Subject Classification.} 34E05, 60G99, 60H10, 60H99.

\smallskip
\noindent \textbf{Keywords.} Backward stochastic differential equation, singular terminal condition, asymptotic approach, singular generator.

\section{Introduction}

This paper is devoted to the study of the {\it asymptotic behavior} of the solution of backward stochastic differential equations (BSDEs) with {\it singular} terminal condition. We adopt from  \cite{krus:popi:16b} and \cite{popi:06} the notion of a weak (super) solution $(Y,Z)$ to a BSDE of the following form
\begin{equation} \label{eq:sing_BSDE}
-dY_t = \frac{1}{\eta_t} f(Y_t) dt + \lambda_t dt - Z_t dW_t
\end{equation}
where $W$ is a $d$-dimensional Brownian motion on a probability space $(\Omega,\cF,\bP)$ with a filtration $\bF = (\cF_t)_{t\geq 0}$. The filtration $\bF$ is the natural filtration generated by $W$ and is supposed to be complete and right continuous. 
The particularity here is that we allow the {\it terminal condition} $\xi$ to be {\it singular}, in the sense that $\xi = +\infty$ a.s. 

Since the seminal paper by Pardoux and Peng \cite{pard:peng:90} BSDEs have proved to be a powerful tool to solve stochastic optimal control problems (see e.g.\ the survey article \cite{el1997backward} or the books \cite{pham2009continuous,yong:zhou:99}). BSDEs with singular terminal condition provide a purely probabilistic solution of a stochastic control problem with a terminal constraint on the controlled process. The analysis of optimal control problems with state constraints on the terminal value is motivated by models of optimal portfolio liquidation under stochastic price impact. The traditional assumption that all trades can be settled without impact on market dynamics is not always appropriate when investors need to close large positions over short time periods. In recent years models of optimal portfolio liquidation have been widely developed, see, e.g. \cite{almg:12,almg:chri:01,fors:kenn:12,gath:schi:11,hors:nauj:14,krat:scho:13}, among many others. In \cite{anki:jean:krus:13}, the following problem is considered: minimizing the cost functional 
\begin{equation}\label{eq:control_pb_intro}
J(X) = \bE \left[  \int_0^T \left( [(p-1)\eta_s]^{p-1} |\derivX_s|^p + \lambda_s |X_s|^p \right) ds  \right]
\end{equation}
over all progressively measurable processes $X$ that satisfy the dynamics
\begin{equation*}
X_s =x +\int_0^s \derivX_u du 
\end{equation*}
with the terminal constraint that $X_T = 0$ a.s. Here $p>1$ and the processes $\eta$ and $\lambda$ are non-negative and progressively measurable. In this framework the state process $X$ denotes the agent's position in the financial market. At each point in time $t$ she can trade in the primary venue at a rate $\derivX_t$ which generates costs $[(p-1)\eta_t]^{p-1} |\derivX_t|^p$ incurred by the stochastic price impact parameter $\eta$. The term $\lambda_t |X_t|^p$ can be understood as a measure of risk associated to the open position. $J(X)$ thus represents the overall expected costs for closing an initial position $x$ over the time period $[0,T]$ using strategy $X$. In \cite{anki:jean:krus:13}, optimal strategies and the value function of this control problem \eqref{eq:control_pb_intro} are characterized with the BSDE
\begin{equation} \label{eq:bsde_contr_prob}
-dY_t  = - \frac{Y_t|Y_t|^{q}}{\eta_t} dt  + \lambda_t dt - Z_t d W_t
\end{equation}
with $\displaystyle \lim_{t\to T} Y_t =+\infty$. Here $q+1$ is the H\"older conjugate of $p$. The function $f:y\mapsto -y|y|^{q}$ is here a polynomial function. Variants of the position targeting problem \eqref{eq:control_pb_intro} have been studied in \cite{anki:krus:13b,grae:hors:qiu:13,grae:hors:sere:18,schie:13}. Note that these problems are particular cases of the stochastic calculus of variations (see \cite{anki:from:krus:18}). 

\bigskip
Let us explain the methodology to obtain a solution for the BSDE \eqref{eq:bsde_contr_prob}. The most common approach in the literature is the so-called \textit{penalization approach}, see, e.g., \cite{popi:06}, \cite{popi:07}, \cite{anki:jean:krus:13}, \cite{grae:hors:qiu:13}, \cite{krus:popi:16b}, and the references therein.  The idea of the penalization approach is to relax the binding liquidation constraint by penalizing open position in the underlying liquidation problem. In \cite{anki:jean:krus:13}, as in \cite{krus:popi:16b} for more general driver, the authors use the penalization approach, replacing the singular terminal by a constant $n$ and letting $n$ go to $+\infty$. The convergence is obtained by a comparison principle for solution of BSDEs (see \cite{krus:popi:16} or \cite{pard:rasc:14}). In \cite{grae:hors:sere:18}, the approach consists in the study of the precise asymptotic behavior at time $T$ of the solution $Y$ of~\eqref{eq:bsde_contr_prob}. Roughly speaking, the major singular term of $Y$ is then removed to obtain a non-singular problem. The key of this \textit{asymptotic approach} is to establish
sharp a priori estimates of the singular solution at the terminal time. In \cite{grae:hors:sere:18}, the authors consider a time-homogeneous Markov setting and obtain the a priori estimates and uniqueness by establishing a general comparison principle for singular viscosity solution to \eqref{eq:bsde_contr_prob}. These results are based on time-shifting arguments, applied similar before in \cite{popi:06}, which in general do not apply in a non-time-homogeneous setting. However, it is outlined in \cite{grae:hors:sere:18} how the shifting argument may be applied in non-Markov settings to obtain sharp a priori estimates of the singular solution to \eqref{eq:bsde_contr_prob}. One major result of \cite{grae:hors:sere:18} is the uniqueness of the solution of \eqref{eq:bsde_contr_prob}, under boundedness assumptions on the coefficients $\mu$ and $\lambda$ (see also \cite{grae:hors:qiu:13}). This approach is also used in the recent paper \cite[Section 4]{horst:xia:18} to obtain uniqueness without the boundedness condition.

\bigskip

Let us outline in which directions our findings generalize some results from these papers. In \cite{krus:popi:16b} the generator may depend also on $Z$ in a non trivial way; here our generator has a special form. However in the previously mentioned papers (see \cite{krus:popi:16b}) $f$ is assumed to be a polynomial function or of polynomial growth w.r.t. $y$, that is $f(y) \leq -y|y|^q$. In the rest of the paper we assume that 
\begin{enumerate}[label=\textbf{(A\arabic*)}]
\item \label{A1} {\it There exist three constants $0 < \etamin < \etamax$ and $\lambdamax \geq 0$ such that a.s. for any $t\in [0,T]$ 
$$\etamin \leq \eta_t \leq \etamax,\qquad 0 \leq \lambda_t \leq \lambdamax.$$}
\item \label{A2} {\it The function $f$ is continuous and non increasing, with $f(0)=0$ and with continuous derivative.}
\item \label{A3} {\it For any $x > 0$, the function
\begin{equation} \label{eq:def_G}
G(x):=\int_x^\infty \frac{1}{-f(t)}\,dt
\end{equation}
is well-defined on $(0,\infty)$.}
\end{enumerate}
Note that if $\eta$ and $\lambda$ are deterministic, the BSDE \eqref{eq:sing_BSDE} becomes an ODE and this condition \ref{A3} is necessary and sufficient to ensure that the solution can be equal to $+\infty$ at time $T$, but finite at any time $t < T$. Under these assumptions, we prove {\it existence of a minimal solution} $(Y,Z^Y)$ of the BSDE \eqref{eq:sing_BSDE} (Proposition \ref{prop:exist_min_sol}). The function $f(y)=-(y+1)|\log(y+1)|^q$ is an example satisfying \ref{A3} but not covered by the preceding papers. 

\medskip
Our main result concerns the {\it asymptotic behaviour and uniqueness of this minimal solution} (Theorem \ref{thm:uniq_sing_BSDE}). Under some (sufficient) conditions \ref{C1_concave} (concavity and regularity), \ref{C2_tech} to \ref{C4_tech} (behaviour near $+\infty$) on $f$ and under the assumption \ref{H_diff} on $\eta$ (It\^o's process), we prove that $Y$ is equal to: a.s. 
\begin{equation} \label{eq:asymp_beha_Y}
Y_t = \phi \left(A_t \right) - \phi'\left( A_t \right) H_t,\quad \forall t \in [0,T], 
\end{equation}
where  
\begin{itemize}
\item for any $t\in [0,T]$
\begin{equation} \label{eq:def_A}
A_t = \bE \left[  \int_t^T \frac{1}{\eta_s} ds \bigg| \cF_t \right],
\end{equation}
\item $\phi=G^{-1}$ solves the ODE: $\phi' = f\circ \phi$ with initial condition $\phi(0)=+\infty$.
\item The process $H$ is the {\it unique solution} of a BSDE with terminal condition 0 and with a {\it singular generator} $F^H$ in the sense of \cite{jean:reve:14}:
\begin{equation}\label{eq:BSDE_H}
 H_t = \int_t^T F^H(s,H_s,Z^H_s) ds + \int_t^T  Z^H_s dW_s.
\end{equation}
\end{itemize}
Singular generator means that $\int_t^T |F^H(s,h,z)| ds = +\infty$. One singularity comes from the explosion at time $T$ and creates trouble only close to time $T$. But in $F^H$ there are linear terms including the martingale part of the process $A$. These terms have to be controlled on the whole interval $[0,T]$, and not only on a neighborhood of $T$. This is the reason of the additional assumptions \ref{C1_concave} to \ref{C4_tech} and \ref{H_diff}. Let us emphasize that there were only two results about uniqueness, namely \cite[Theorem 6.3]{grae:hors:sere:18} and \cite[Theorem 4.3]{horst:xia:18} for the power case $f(y)=-y|y|^q$, $q>0$. Uniqueness was proved by showing that any solution $(Y,Z^Y)$ is the value function of the control problem \eqref{eq:control_pb_intro}. Here the proof is based on the comparison principle for BSDEs.

The {\it asymptotic behavior} follows from the boundedness of the process $-\phi'(A) H / \phi(A)$ on some deterministic neighborhood of $T$: a.s.
$$\phi(A_t) = \phi\left( \bE \left[  \int_t^T \frac{1}{\eta_s} ds \bigg| \cF_t \right]\right) \leq Y_t \leq (1+\kappa) \phi\left( \bE \left[  \int_t^T \frac{1}{\eta_s} ds \bigg| \cF_t \right]\right),$$
where the constant $\kappa$ depends on the coefficients $\eta$, $\lambda$ and $f$. This inequality justifies the decomposition \eqref{eq:asymp_beha_Y}: the dominating term at time $T$ is $\phi(A_t)$. Moreover if $\eta$ is deterministic, the solution of the BSDE \eqref{eq:sing_BSDE} with $\lambda\equiv 0$ is exactly $\phi(A)$. Somehow the randomness and the presence of $\lambda$ are considered as a perturbation of $\phi(A)$ and the decomposition \eqref{eq:asymp_beha_Y}  is a kind of Taylor's expansion of $Y$. Note that different decompositions could be considered (see the end of Section \ref{sect:asymp_beha} or the extensions developed in Section \ref{sect:general_gene}); they all give the same asymptotic behavior for $Y$ at time $T$.

\medskip
As a consequence, we provide a new {\it one-to-one correspondence} between the BSDE \eqref{eq:sing_BSDE} with singular terminal condition and the BSDE \eqref{eq:BSDE_H} with singular generator. The main drawback for BSDE with singular terminal condition is the lack of approximation scheme with some rate of convergence. Indeed most of numerical schemes for BSDE are based on backward induction starting at the terminal value. The correspondence \eqref{eq:asymp_beha_Y} between $Y$ and $H$ could be a promising solution for numerical scheme, since the terminal value of $H$ is zero. The singularity of the generator of $H$ is a serious obstacle. However the works made on irregular terminal condition (see \cite{geis:geis:gobe:12} or \cite{gobe:makh:10}) could be inspiring to construct numerical schemes on the BSDE \eqref{eq:BSDE_H}. This point is left for further research.

\bigskip
We also study two different cases. The first one is the power case $f(y) = -y|y|^q$. If $\eta$ is an It\^o process, we follow the arguments of the paper \cite{grae:hors:sere:18} for a PDE and show that the relation \eqref{eq:asymp_beha_Y} can be formulated as follows: a.s.
\begin{equation} \label{eq:asymp_beha_power_case}
\forall t \in [0,T],\quad Y_t= \left( \eta_t \right)^{1/q} \phi(T-t)-\phi'(T-t) H^{\sharp}_t ,
\end{equation}
where $H^\sharp$ solves the BSDE with a singular generator $F^\sharp$:
$$H^{\sharp}_t= \int_t^T  F^\sharp(s,H^\sharp_s)\,ds-\int_t^T Z^{H^\sharp}_sdW_s,$$
see Theorem \ref{thm:power_case}. The interesting point here is that the unique solution $H^\sharp$ is {\it constructed by the Picard iteration} in the space  
\begin{equation} \label{eq:def_cH_delta} 
\mathcal H^\delta:=\{H\in L^\infty(\Omega;C([T-\delta, T];\mathbb R)):\|H\|_{\mathcal H^\delta}<+\infty\}
\end{equation}
 endowed with the weighted norm 
\begin{equation} \label{eq:def_norm_cH_delta}
\|H\|_{\mathcal H^\delta}=\left\|\sup_{t\in{[T-\delta,T)}}(T-t)^{-2}|H_t|\right\|_\infty.
\end{equation}
The constant $\delta$ depends on $q$ and the norm of the coefficients $\eta$ and $\lambda$. This construction has two main advantages: first we have a more accurate behavior of $H^\sharp$ at time $T$, secondly this construction is more tractable for numerical approximation. Since $H^\sharp$ is obtained by a fixed point argument in a weighted space, we strongly believe that it could be a way to compute $H^\sharp$, and thus $Y$.

\medskip
The second case is an attempt to relax the assumptions on $f$ (in particular concavity) and on $\eta$ (diffusion process). Instead of \eqref{eq:asymp_beha_Y}, we look at: a.s.
\begin{equation} \label{eq:asymp_beha_general_case}
\forall t \in [0,T],\quad Y_t = \phi(A_t) - \phi' \left(\frac{T-t}{\etamax} \right)\widehat H_t =\phi(A_t) - \psius(t) \widehat H_t.
\end{equation}
It is important to note that there is some asymmetry in \eqref{eq:asymp_beha_general_case} since the first term with $\phi$ is random, whereas the second with $\psius$ is deterministic. This method avoids assuming extra assumptions on $f$ but we loose uniqueness of the solution (see Theorem \ref{thm:one_one_correspondance}).

\bigskip

The paper is decomposed as follows.  In Section \ref{sect:sol_sing_BSDE}, we recall and extend several results concerning the {\it existence of the solution $(Y,Z^Y)$ of the BSDE \eqref{eq:sing_BSDE} with singular terminal condition} $+\infty$ and provide some a priori estimates on this solution $Y$ and on $Z^Y$. 

Section \ref{sect:asymp_beha} is dedicated to the decomposition \eqref{eq:asymp_beha_Y} and to the statement of Theorem \ref{thm:uniq_sing_BSDE}. We explain our additional assumptions on the coefficients $\eta$ and $f$ of the BSDE \eqref{eq:sing_BSDE}. In the subsection \ref{ssect:examples}, the reader finds here several examples of functions $f$, for which these hypotheses hold. In the part \ref{ssect:generator_f_H} we study the properties of the coefficients of the generator $F^H$, and in the next subsection \ref{ssect:proof_main_thm} we prove the {\it existence and uniqueness of the solution of the BSDE  with a singular generator and with terminal condition 0} (Equation \eqref{eq:BSDE_H})  and the {\it one-to-one correspondence} between the solutions $Y$ and $H$. The rest of the section (part \ref{ssect:asymp_behaviour}) concerns the asymptotic behaviour of $Y$. 

In Section \ref{sect:general_gene}, we study the power case and we prove that $H^\sharp$ can be constructed by a Picard approximation scheme. And we briefly explain how to handle \eqref{eq:asymp_beha_general_case} without extra conditions on $f$. 

Finally in the appendix, the non-negativity assumption of $\lambda$ is removed and we give a {\it self-contained construction of the solution} $(H,Z^H)$ (without any reference to $Y$) which extends the existence result of \cite{jean:reve:14}. 

 %

In the continuation, unimportant constants will be denoted by $C$ and they could vary from line to line.

\section{BSDEs with singular terminal condition} \label{sect:sol_sing_BSDE}

Let us introduce the following spaces for $p\geq 1$.
\begin{itemize}
\item $\bD^p(0,T)$ is the space of all adapted c\`adl\`ag\footnote{French acronym for right-continuous with left-limit} processes $X$ such that
$$\bE \left(  \sup_{t\in [0,T]} |X_t|^p \right) < +\infty.$$
\item $\bH^p(0,T)$ is the subspace of all predictable processes $X$ such that
$$\bE \left[ \left( \int_0^T |X_t|^2 dt\right)^{\frac{p}{2}} \ \right] < +\infty.$$
\item $\bS^p(0,T) = \bD^p(0,T) \times \bH^p(0,T)$ and $\bS^\infty(0,T) = \bigcap_{p\geq 1} \bS^p(0,T)$. 
\end{itemize}
Whenever the notation $T-$ appears in the definition of a process space, we mean the set of all processes whose restrictions satisfy the respective property when $T-$ is replaced by any $T-\eps$, $\eps > 0$. 
Let us define the notion of solution for a singular BSDE. 

\begin{Def}[BSDE with singular terminal condition] \label{def:sing_cond_sol}
The process $(Y,Z^Y)$ is a solution of the BSDE \eqref{eq:sing_BSDE} with terminal condition $+\infty$ if:
\begin{itemize}
\item There exists $p>1$ such that $(Y,Z^Y)\in \bS^p(0,T-)$.
\item For any $0\leq s \leq t < T$, 
$$Y_s = Y_t + \int_s^t \left[ \frac{1}{\eta_u} f(Y_u) + \lambda_u \right] du - \int_s^t Z^Y_u dW_u.$$
\item $Y_t \geq 0$ for any $t \in [0,T]$,a.s.  .
\item A.s. $\displaystyle \lim_{t\to T} Y_t =+\infty$.
\end{itemize}
\end{Def}
Minimality of the solution $(Y,Z^Y)$ means that for any other process $(\widetilde Y,\widetilde Z)$ satisfying the previous four items, a.s. $\widetilde Y_t \geq Y_t$ for any $t$.

In the BSDE \eqref{eq:sing_BSDE}, the generator is of the form:
$$(\omega,t,y) \mapsto \frac{1}{\eta_t(\omega)} f(y) + \lambda_t(\omega).$$
Recall that here and in the rest of the paper, the conditions \ref{A1} and \ref{A2} hold. Supposing that $f$ is continuous and non increasing\footnote{For monotone BSDEs, the classical assumption is: for some $\mu \in \bR$ and any $(y,y')\in \bR^2$, $(f(y)-f(y'))(y-y') \leq \mu (y-y')^2$. By a very standard transformation (see \cite[Proof of Corollary 5.26]{pard:rasc:14}) we may assume w.l.o.g. that $\mu = 0$, thus $f$ is non increasing.}  is coherent with the existence and uniqueness results concerning monotone BSDEs (see \cite[Chapter 5.3.4]{pard:rasc:14}). Note that if $f(0)\neq 0$, then
$$\frac{f(y)}{\eta_t} + \lambda_t = \frac{f(y)-f(0)}{\eta_t} + \lambda_t + \frac{f(0)}{\eta_t} = \frac{\widetilde f(y)}{\eta_t} + \widetilde \lambda_t,$$
provided that $\widetilde \lambda_t \geq 0$. The non-negativity of $\lambda$ is natural for the control problem \eqref{eq:control_pb_intro} and leads to a more accurate expansion of $Y$. However it is not necessary (see Section \ref{ssect:sign_lambda} in the appendix for a short discussion on this point). Somehow and to summarize, the only stronger condition on our type of generator is the regularity: $f \in C^1(\bR)$.

Let us summarize the known results on the singular BSDE \eqref{eq:sing_BSDE}, assuming that the assumptions \ref{A1} and \ref{A2} hold.
\begin{itemize}
\item From \cite[Theorem 1]{krus:popi:16b}, if $f(y)\leq -y|y|^q$ for some $q>0$, the BSDE \eqref{eq:sing_BSDE} has a minimal solution $(Y,Z^Y)$.
\item From \cite[Theorem 6.3]{grae:hors:sere:18}, if $f(y) = - y|y|^q$, the solution is unique. 
\end{itemize}
To obtain the existence of a solution, a key step is the existence of an a priori estimate on $Y$ (see \cite{anki:jean:krus:13}, \cite{grae:hors:sere:18} or \cite{krus:popi:16b}). Using the arguments of \cite[Proposition 6.1]{grae:hors:sere:18}, we obtain that if $f(y)\leq -y|y|^q$, any solution of the BSDE \eqref{eq:sing_BSDE} satisfies:
\begin{equation} \label{eq:power_case_a_priori_estim}
Y_t  \leq  \frac{1}{(T-t)^{p}} \bE \left[ \int_t^T\left(  \left( \frac{\eta_s}{q} \right)^{\frac{1}{q}} + (T-s)^{p} \lambda_s \right) ds \bigg| \cF_t\right] 
\end{equation}
where $p$ is the H\"older conjugate of $q+1$. 


The goal of this section is to extend the existence result to a larger class of functions $f$. Let us consider the ordinary differential equation (ODE): $y'=-f(y)$ with the terminal condition $y(T) = +\infty$. There exists a solution if and only if the function $G$ given by \eqref{eq:def_G} is well-defined at least on some interval $(\kappa,+\infty)$, with $\kappa=\sup \{y \geq 0, \ f(y) = 0\}$, meaning that $f\equiv 0$ on $[0,\kappa]$. Note that the function $G$ is positive, strictly decreasing and convex, such that $G(\infty)=0$ and the smoothness of $f$ implies that $G(\kappa) = + \infty$. Then the solution $y$ is given by: $y(t) = G^{-1}(T-t)$ on $[0,T]$. Defining $f^\kappa(x)=f(x+\kappa)$, $G^\kappa(x) = G(x+\kappa)$ yields that $G^\kappa$ is defined on $(0,+\infty)$. Moreover the solution $y$ is given by: $y(t) =G^{-1}(T-t)= (G^\kappa)^{-1}(T-t) - \kappa$ and solves $y' = -f^\kappa(y)$ together with $y(T) = +\infty$. 

Hence w.l.o.g. we assume from now on that \ref{A3} holds. Recall that $\phi=G^{-1}$ is decreasing and $C^2$ on $(0,\infty)$ and solves $\phi' = f\circ \phi$. 
\begin{Rem}
In the inequality \eqref{eq:power_case_a_priori_estim}, the boundedness assumption {\rm \ref{A1}} can be replaced by some integrability condition on $\lambda$ and $\eta$. Thereby weaker conditions are considered in \cite{anki:jean:krus:13} or \cite{krus:popi:16b} to obtain existence and in \cite{horst:xia:18} for the uniqueness. However under {\rm \ref{A3}}, the required integrability condition is not easy to determine. For small value of $q$, \eqref{eq:power_case_a_priori_estim} is finite if $\eta$ is in $\bL^{1/q}(\Omega \times [0,T])$, that is not so far from boundedness. 
\end{Rem}

Let us now give an upper bound on $Y$, similar to \cite[Proposition 6.1]{grae:hors:sere:18} but again for a general function $f$. Let us consider the function 
$$\widetilde G(x) = \int_x^\infty \frac{-1}{ \lambdamax + \frac{f(y)}{\etamax}}  dy = \etamax  \int_x^\infty \frac{1}{- \lambdamax \etamax -f(y)} dy$$
defined on the interval $I=(f^{-1}(- \lambdamax \etamax),+\infty)$. Since $f$ is a function with continuous derivative, $\widetilde G(f^{-1}(- \lambdamax \etamax))=+\infty$. If we define $\vartheta = \widetilde G^{-1}$, this function is well-defined on $(0,+\infty)$, with $\vartheta(0)=+\infty$ and satisfies:
\begin{equation}\label{eq:upper_ODE}
\vartheta' = \lambdamax + \frac{f(\vartheta)}{\etamax}.
\end{equation}
Note that the function $\vartheta$ strongly depends on $\etamax$, $\lambdamax$ and $f$. 
\begin{Lem} \label{lem:gene_a_priori_estim}
Assume that the process $(U,Z^U)$ satisfies the dynamics: for any $\eps > 0$ and $0\leq t \leq T-\eps$
\begin{equation} \label{eq:a_priori_upper_bound_dyn}
U_t + \zeta_t = U_{T-\eps}+\int_t^{T-\eps} \left[ \lambda_s + \frac{1}{\eta_s} f(U_s) \right]ds - \int_t^{T-\eps} \Theta_s ds  -  \int_t^{T-\eps}  Z^U_s dW_s,
\end{equation}
where $\zeta$ and $\Theta$ are two non-negative processes. Then a.s. for all $t \in [0,T)$,
\begin{equation} \label{eq:a_priori_estimate_U}
0 \leq U_t \leq \vartheta(T-t).
\end{equation}
\end{Lem}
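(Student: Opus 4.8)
The plan is to prove the upper bound $U_t \le \vartheta(T-t)$ via a comparison argument, comparing the process $U$ against the deterministic solution $\vartheta(T-\cdot)$ of the ODE \eqref{eq:upper_ODE}. The key observation is that $\vartheta(T-t)$ is precisely the solution to the BSDE/ODE obtained by replacing $\eta_s$ by its upper bound $\etamax$, $\lambda_s$ by its upper bound $\lambdamax$, and dropping the nonnegative correction terms $\zeta$ and $\Theta$. Since the generator $y \mapsto \lambda_s + f(y)/\eta_s$ is (for fixed $s$) a nonincreasing function of $y$ — here I use \ref{A2} — and since increasing $\eta$ and $\lambda$ to their suprema can only increase the driver (because $f \le 0$ by \ref{A2} together with $f(0)=0$, so $f(U_s)/\eta_s \le f(U_s)/\etamax$ and $\lambda_s \le \lambdamax$), the process $\vartheta(T-\cdot)$ should dominate $U$.

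\medskip

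The nonnegativity $U_t \ge 0$ follows directly from the hypothesis that $(U,Z^U)$ satisfies the stated dynamics together with the minimal-solution framework; more precisely, one notes that on $[0,T-\eps]$ the process is an explicit (super)solution with nonnegative running terms, and the argument of the existence part of this section already furnishes the lower bound. I would first dispose of this lower bound and then concentrate on the upper bound, which is the substantive part.

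\medskip

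For the upper bound, first I would fix $\eps > 0$ and work on $[0,T-\eps]$. Set $v(t) = \vartheta(T-t)$, which by \eqref{eq:upper_ODE} satisfies $-v'(t) = \lambdamax + f(v(t))/\etamax$ on $[0,T)$, and note $v$ is finite on $[0,T)$ with $v(t) \to +\infty$ as $t \to T$. Consider the difference $D_t = U_t - v(t)$. Using \eqref{eq:a_priori_upper_bound_dyn} and the ODE for $v$, one writes the dynamics of $D$ on $[0,T-\eps]$ as a BSDE driven by $Z^U$ with a terminal value $D_{T-\eps} = U_{T-\eps} - v(T-\eps)$ and a generator built from $\lambda_s + f(U_s)/\eta_s - \lambdamax - f(v(t))/\etamax$ minus the nonnegative terms. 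I would linearize the $f$-difference by the mean value theorem, writing $f(U_s) - f(v(s)) = a_s (U_s - v(s))$ with $a_s = f'(\xi_s) \le 0$ (by \ref{A2}, $f' \le 0$), so the $f$-contribution produces a bounded nonpositive linear coefficient $a_s/\eta_s$ on $D_s$. The remaining terms $(\lambda_s - \lambdamax) + f(v(s))(1/\eta_s - 1/\etamax) - \zeta_s' - \Theta_s$ (appropriately interpreted) are all nonpositive, since $\lambda_s \le \lambdamax$, $f(v(s)) \le 0$, $1/\eta_s \ge 1/\etamax$, and $\zeta,\Theta \ge 0$. Then a standard linear-BSDE comparison (or Girsanov/It\^o with the exponential weight $\exp(\int_0^s a_u/\eta_u \, du)$, bounded because $a/\eta$ is bounded) gives $D_t \le \bE[\,\Gamma_{t}^{T-\eps} D_{T-\eps} \mid \cF_t\,]$ for a nonnegative weight $\Gamma$, where the driver contributes only nonpositive terms.

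\medskip

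The main obstacle is controlling the terminal contribution $D_{T-\eps} = U_{T-\eps} - v(T-\eps)$ as $\eps \to 0$. Unlike a classical terminal condition, here both $U_{T-\eps}$ and $v(T-\eps) = \vartheta(\eps)$ blow up, so I cannot simply bound $D_{T-\eps}$ by a constant. The resolution is to show that the weighted terminal term $\bE[\Gamma_t^{T-\eps}(U_{T-\eps} - \vartheta(\eps)) \mid \cF_t]$ does not contribute a positive amount in the limit; concretely, since $U \in \bS^p(0,T-)$ has at most the a priori growth controlled in \eqref{eq:power_case_a_priori_estim}, while $\vartheta(\eps) \to +\infty$ at the singular rate dictated by the ODE \eqref{eq:upper_ODE}, one argues that $(U_{T-\eps} - \vartheta(\eps))^+$ stays controlled and, after multiplying by the bounded weight and taking the limit $\eps \to 0$, the positive part vanishes in expectation (or one uses Fatou together with the finiteness of $U_t$ for $t<T$). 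This passage to the limit, reconciling the two competing singularities at $T$, is the delicate point; everything else is a routine monotone linear-BSDE comparison. Concluding, $D_t \le 0$, i.e. $U_t \le \vartheta(T-t)$ for all $t \in [0,T)$, a.s.
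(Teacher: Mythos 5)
Your overall strategy (compare $U$ with the deterministic supersolution built from the ODE \eqref{eq:upper_ODE} via a linearized comparison) is the right family of ideas, and the monotonicity bookkeeping ($\lambda_s \le \lambdamax$, $f(U_s)/\eta_s \le f(U_s)/\etamax$ once $U_s \ge 0$, and the nonnegativity of $\zeta$ and $\Theta$) is correct. But the step you yourself flag as ``the delicate point'' is a genuine gap, not a routine limit. Comparing $U$ directly with $v(t)=\vartheta(T-t)$ on $[0,T-\eps]$ forces you to control the terminal difference $U_{T-\eps}-\vartheta(\eps)$, where \emph{both} terms explode as $\eps\to 0$. None of the tools you invoke closes this: the estimate \eqref{eq:power_case_a_priori_estim} is only available when $f(y)\le -y|y|^q$, whereas the lemma is precisely meant to cover general $f$ under \ref{A1}--\ref{A3} (and invoking any a priori upper bound on $U$ at this stage would be circular, since such a bound is exactly what the lemma is establishing); membership of $U$ in $\bS^p(0,T-)$ gives no information on the blow-up rate at $T$; and Fatou's lemma bounds a limit of expectations from below, which is the wrong direction for showing $\limsup_{\eps} \bE\bigl[\Gamma_t^{T-\eps}\,(U_{T-\eps}-\vartheta(\eps))^+\mid\cF_t\bigr]=0$.

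The paper resolves exactly this difficulty by \emph{shifting the singularity} rather than confronting it: on $[0,T-\eps]$ one compares $U$ not with $\vartheta(T-t)$ but with $\vartheta(T-\eps-t)$, which solves the same ODE $y'=-\lambdamax-f(y)/\etamax$ but blows up at $T-\eps$. The terminal comparison at $t=T-\eps$ is then trivial, because the supersolution equals $\vartheta(0)=+\infty$ there and dominates any finite $U_{T-\eps}$; the comparison principle yields $U_t\le\vartheta(T-\eps-t)$ on $[0,T-\eps]$, and since $U$ does not depend on $\eps$, letting $\eps\downarrow 0$ for fixed $t<T$ (using the continuity and monotonicity of $\vartheta$) gives $U_t\le\vartheta(T-t)$. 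If you replace your terminal-limit step by this shift, the rest of your linearized comparison goes through essentially as you wrote it.
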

\begin{proof}
We proceed as in the proof of \cite[Proposition 6.1]{grae:hors:sere:18}, namely we shift the singularity. 
Take any $0< \eps$ such that $0 \leq T-\eps < T$. 
The function $( \vartheta \left( T-\eps -t \right), \ t \in [0,T-\eps]) $ solves the ODE: $y(T-\theta)=+\infty$ and 
$$y' = -\lambdamax - \frac{f(y)}{\etamax}.$$
By the comparison principle again we have that $U_t \leq \vartheta \left( T-\eps -t \right)$ on $[0,T-\eps]$. Since $U$ does not depend on $\eps$, we obtain that a.s. 
$$\forall t \in [0,T), \quad U_t \leq \vartheta \left( T -t \right) .$$
This achieves the proof of the lemma.
\end{proof}

As a by-product, our proof implies that for any non-negative solution $(Y,Z^Y)$ of the BSDE \eqref{eq:sing_BSDE}, we have a.s. on $[0,T]$:
\begin{equation} \label{eq:a_priori_estim_Y}
Y_t \leq  \vartheta(T-t).
\end{equation}
 Compared to \eqref{eq:power_case_a_priori_estim}, in the power case $f(y)=-y|y|^q$, this estimate is less accurate. However it holds for functions without polynomial growth. 
A typical example is $f(y) = - (y+1)|\log(y+1)|^q$ for some $q > 1$. 
\begin{Prop} \label{prop:exist_min_sol}
Under Conditions {\rm \ref{A1}},  {\rm \ref{A2}} and {\rm \ref{A3}}, the BSDE \eqref{eq:sing_BSDE} has a minimal non-negative solution $(Y,Z^Y) \in \bS^\infty(0,T-)$ such that \eqref{eq:a_priori_estim_Y} holds. 
\end{Prop}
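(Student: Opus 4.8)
The plan is to obtain the minimal non-negative solution by a penalization (or truncation) procedure, approximating the singular terminal condition $+\infty$ by finite constants $n$ and passing to the limit $n \to \infty$. First I would fix $n \in \mathbb N$ and consider the BSDE on the full interval $[0,T]$ with the bounded terminal condition $Y^n_T = n$ and generator $(\omega,t,y) \mapsto \frac{1}{\eta_t(\omega)} f(y) + \lambda_t(\omega)$. Under \ref{A1} and \ref{A2} this is a standard monotone BSDE with a bounded terminal condition: the generator is continuous, non-increasing in $y$ (hence one-sided Lipschitz with constant $0$ after the transformation noted in the footnote), and has the integrability required by the theory of \cite{pard:rasc:14}. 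Therefore each $(Y^n,Z^n)$ exists, is unique, and lies in $\bS^p(0,T)$ for all $p$. Since $f(0)=0$ and $\lambda \geq 0$, the comparison principle gives $Y^n_t \geq 0$; comparing the terminal conditions $n \leq n+1$ gives monotonicity $Y^n_t \leq Y^{n+1}_t$. Thus $Y_t := \lim_{n\to\infty} Y^n_t$ is well-defined (possibly $+\infty$) and non-negative.

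The crucial ingredient controlling this limit is the a priori upper bound. Each $Y^n$ satisfies the dynamics \eqref{eq:a_priori_upper_bound_dyn} of Lemma \ref{lem:gene_a_priori_estim} with $\zeta \equiv 0$ and $\Theta \equiv 0$ (the terminal value $n$ can be absorbed as a non-negative boundary contribution, or one argues directly by comparison with $\vartheta(T-\eps-\cdot)$ shifted so that it dominates the constant $n$ near $T-\eps$), so Lemma \ref{lem:gene_a_priori_estim} yields $0 \leq Y^n_t \leq \vartheta(T-t)$ uniformly in $n$. Passing to the limit, the candidate solution inherits the bound \eqref{eq:a_priori_estim_Y}, namely $0 \leq Y_t \leq \vartheta(T-t)$ for $t \in [0,T)$. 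In particular $Y_t < \infty$ for every $t<T$, so $Y$ is a genuine real-valued process away from the terminal time.

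Next I would upgrade the pointwise monotone limit to convergence of $(Y^n, Z^n)$ in $\bS^p(0,T-\eps)$ for every $\eps>0$ and every $p \geq 1$. On each $[0,T-\eps]$ the bound $\vartheta(T-t) \leq \vartheta(\eps)<\infty$ gives a uniform $L^\infty$ bound on $Y^n$; feeding this into the generator and using standard BSDE a priori estimates (testing the difference $Y^n - Y^m$ with Itô's formula, using monotonicity of $f$ to discard the generator difference with the right sign, and controlling the stochastic integral via Burkholder–Davis–Gundy) produces a Cauchy estimate, yielding a limit $(Y,Z^Y) \in \bS^\infty(0,T-)$ solving the BSDE \eqref{eq:sing_BSDE} on every $[s,t]$ with $t<T$. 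The terminal behaviour $\lim_{t\to T} Y_t = +\infty$ is then obtained by a lower bound: comparing $Y$ from below against the solution with $\lambda \equiv 0$, or directly using that $Y_t \geq Y^n_t$ and the explicit lower estimate coming from the ODE $\phi = G^{-1}$ (so $Y_t \geq \phi(A_t) \to +\infty$ as $t\to T$, where $A_t$ is defined in \eqref{eq:def_A}).

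Finally, minimality follows from the comparison-and-limit structure of the construction. Given any other non-negative solution $(\widetilde Y, \widetilde Z)$ in the sense of Definition \ref{def:sing_cond_sol}, its terminal condition is $+\infty \geq n$, so on each $[0,T-\eps]$ comparison with the finite-terminal-condition solution $Y^n$ gives $\widetilde Y_t \geq Y^n_t$; letting $n\to\infty$ yields $\widetilde Y_t \geq Y_t$, which is exactly minimality. The main obstacle I anticipate is the justification at the terminal time: the comparison step must be performed on $[0,T-\eps]$ and then $\eps \to 0$, and one must ensure that the a priori bound and the convergence of the $Z^n$ are genuinely uniform on compact subsets of $[0,T)$ while degenerating controllably as $t \to T$. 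Verifying that the monotone limit $Y$ does blow up (rather than staying bounded) requires the sharp lower bound, and matching the upper bound \eqref{eq:a_priori_estim_Y} with the singular behaviour is the delicate point; the rest is a relatively routine application of the monotone BSDE machinery from \cite{pard:rasc:14} together with Lemma \ref{lem:gene_a_priori_estim}.
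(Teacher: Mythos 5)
Your proposal is correct and follows essentially the same route as the paper: the paper's own proof is a two-line appeal to the penalization scheme of \cite{anki:jean:krus:13} and \cite{krus:popi:16b}, using the a priori bound \eqref{eq:a_priori_estim_Y} from Lemma \ref{lem:gene_a_priori_estim} for convergence on each $[0,T-\eps]$ and citing \cite[Proposition 4]{krus:popi:16b} for minimality, which is exactly the construction you spell out. The only point to tighten is the minimality step: the comparison on $[0,T-\eps]$ pits $\widetilde Y_{T-\eps}$ against $Y^n_{T-\eps}$ (not against $n$), so one must first let $\eps\to 0$ with a Fatou-type argument using $\liminf_{t\to T}\widetilde Y_t=+\infty\geq n=\lim_{t\to T}Y^n_t$ before concluding $\widetilde Y\geq Y^n$, as in the cited reference.
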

\begin{proof}
The existence of a non-negative solution can be obtained by the same penalization arguments as in \cite{anki:jean:krus:13} or \cite{krus:popi:16b}. We use the a priori estimate \eqref{eq:a_priori_estim_Y} in order to obtain the convergence of the penalization scheme on any interval $[0,T-\eps]$. Minimality can be proved as in \cite[Proposition 4]{krus:popi:16b}. Thus we skip the details here.
\end{proof}

\begin{Rem}[Generator depending on $Z$] \label{rem:Z_gene}
Assume that the generator has the form:
$$(t,\omega,y,z) \mapsto \frac{f(y)}{\eta_t(\omega)} + \lambda_t (\omega)+ \zeta(t,\omega,z),$$
where there exists a constant $C$ such that for any $(t,\omega,z,z')$ 
$$0\leq \zeta(t,\omega,0)\leq C,\qquad |\zeta(t,\omega,z) - \zeta(t,\omega,z')| \leq C|z-z'|.$$
Using the Girsanov theorem, existence of a solution can be derived directly from Proposition \ref{prop:exist_min_sol}. Indeed if $(Y,Z^Y)$ is a solution of the BSDE \eqref{eq:sing_BSDE}, then 
\begin{align*}
Y_t & = Y_u +\int_t^u \left[ \frac{f(Y_s)}{\eta_s} + \lambda_s + \zeta(s,0) \right] ds - \int_t^u Z^Y_s \left[ dW_s - \zeta^{Z^Y}_s ds\right] \\
& = Y_u +\int_t^u \left[ \frac{f(Y_s)}{\eta_s} + \lambda_s + \zeta(s,0) \right] ds - \int_t^u Z^Y_s  d\widetilde W_s
\end{align*}
where the process  $\zeta^{Z^Y}$ is bounded by $C$ and $\widetilde W$ is a Brownian motion under a probabilty measure $\bQ$ equivalent to $\bP$. 
Moreover all results in this paper remain valid under this probability measure $\mathbb Q$ equivalent to $\mathbb P$. 
\end{Rem}

We define the process $A$ by \eqref{eq:def_A}. 
Note that from the boundedness of $\eta$
\begin{equation} \label{eq:bound_A}
 \frac{1}{\etamax }(T-t) \leq A_t \leq \frac{1}{\etamin} (T-t).
 \end{equation}
Let us add a lower bound on $Y$, similar to \cite[Estimate 3.7]{anki:jean:krus:13} (see also \cite[Proposition 4.1]{horst:xia:18}), but for a more general driver $f$. 
\begin{Lem}\label{lem:lower_bound}
The minimal solution $Y$ satisfies a.s. for any $t \in [0,T]$,
\begin{equation} \label{eq:lower_Y_estimate}
Y_t \geq  \phi\left(A_t  \right).
\end{equation}
\end{Lem}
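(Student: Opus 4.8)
The plan is to exhibit, for each $\delta>0$, the deterministically shifted process $V^\delta_t:=\phi(A_t+\delta)$ as a sub-solution of the BSDE \eqref{eq:sing_BSDE} and to compare it with the finite–terminal approximations that increase to the minimal solution $Y$. Recall from the construction underlying Proposition \ref{prop:exist_min_sol} that $Y=\lim_{n\to\infty}Y^n$ as a non-decreasing limit, where $(Y^n,Z^n)$ solves the monotone BSDE \eqref{eq:sing_BSDE} with the bounded terminal value $Y^n_T=n$. The whole point of working with $V^\delta$ instead of $\phi(A_\cdot)$ itself is that $V^\delta$ stays \emph{bounded} up to time $T$ (since $A_t\ge 0$ and $\phi$ is decreasing), so that both $V^\delta$ and $Y^n$ carry finite terminal data and the singularity at $T$ is avoided.

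First I would write $A$ as an It\^o process. As $\int_0^T \eta_s^{-1}\,ds\le T/\etamin$ is bounded, the martingale $M_t=\bE[\int_0^T\eta_s^{-1}\,ds\mid\cF_t]$ admits a representation $M_t=M_0+\int_0^t\beta_s\,dW_s$ with $\beta\in\bH^2(0,T)$, whence $dA_t=\beta_t\,dW_t-\eta_t^{-1}\,dt$. Applying It\^o's formula to $V^\delta_t=\phi(A_t+\delta)$ and using $\phi'=f\circ\phi$ gives
\[
-dV^\delta_t=\Big[\tfrac{1}{\eta_t}f(V^\delta_t)-\tfrac12\phi''(A_t+\delta)|\beta_t|^2\Big]dt-\phi'(A_t+\delta)\beta_t\,dW_t .
\]
The key sign is $\phi''\ge 0$: differentiating $\phi'=f\circ\phi$ yields $\phi''=(f'\circ\phi)(f\circ\phi)$, and since $f$ is non-increasing ($f'\le 0$) while $f(\phi)<0$ on $(0,\infty)$, this product is non-negative. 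Because moreover $\lambda\ge 0$ by \ref{A1}, the driver of $V^\delta$ is dominated by that of $Y^n$ at every value $y$:
\[
\tfrac{1}{\eta_t}f(y)-\tfrac12\phi''(A_t+\delta)|\beta_t|^2\ \le\ \tfrac{1}{\eta_t}f(y)\ \le\ \tfrac{1}{\eta_t}f(y)+\lambda_t .
\]
Since $A_t\in[0,T/\etamin]$ forces $V^\delta$, as well as $\phi'(A+\delta)$ and $\phi''(A+\delta)$, to be bounded, $V^\delta$ is a genuine $\bS^2(0,T)$ solution of a monotone BSDE on the whole interval $[0,T]$.

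It then remains to order the terminal data and pass to the limit. As $V^\delta_T=\phi(\delta)$, choosing $n\ge\phi(\delta)$ gives $Y^n_T=n\ge\phi(\delta)=V^\delta_T$. The comparison principle for monotone BSDEs (cf.\ \cite{pard:rasc:14,krus:popi:16}), applied on $[0,T]$ with the driver ordering above, yields $Y^n_t\ge V^\delta_t=\phi(A_t+\delta)$ for all $t$, a.s. Letting $n\to\infty$ gives $Y_t\ge\phi(A_t+\delta)$, and then $\delta\downarrow 0$ together with the continuity of $\phi$ on $(0,\infty)$ yields $Y_t\ge\phi(A_t)$; at $t=T$ both sides equal $+\infty$.

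The main obstacle is precisely the matching of the two singular terminal conditions: both $Y$ and $\phi(A_\cdot)$ blow up at $T$, so a naive comparison there is impossible. The device that resolves it is the simultaneous use of the truncation $Y^n$ (finite terminal $n$) and the shift $\delta$ (which keeps $\phi(A+\delta)$ bounded), after which the elementary ordering $n\ge\phi(\delta)$ makes the classical comparison theorem applicable on all of $[0,T]$. The only technical points requiring care are that $A$ is an It\^o process with square-integrable integrand and that the convexity $\phi''\ge 0$ holds, both of which follow from \ref{A1}--\ref{A3} and the $C^1$ regularity of $f$.
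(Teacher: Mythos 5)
Your proposal is correct and follows essentially the same route as the paper: the paper also shifts the argument of $\phi$ by a constant (writing $U^L=\phi(1/L+A_t)$, i.e.\ your $V^\delta$ with $\delta=1/L$), applies It\^o's formula, uses $\phi''=(f'\circ\phi)(f\circ\phi)\ge 0$ to get the right sign on the extra quadratic-variation term, and invokes the comparison principle against the approximating sequence defining the minimal solution before letting the shift go to zero. Your write-up merely makes explicit the ordering $n\ge\phi(\delta)$ of the terminal data, which the paper leaves implicit.
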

\begin{proof}
Indeed the process $A$ satisfies
\begin{equation} \label{eq:dynamics_A}
-dA_t = \frac{1}{\eta_t} dt + Z^A_t dW_t
\end{equation}
for some $Z^A \in \bH^2(0,T)$. For some $L > 0$, define $A^L_t = \frac{1}{L} + A_t$. 
Since $\phi$ is a smooth function, if $U^L = \phi(A^L)$, It\^o's formula leads to
\begin{eqnarray*}
-dU^L_t &= &\phi'(A^L_t) \left[ \frac{1}{\eta_t } dt + Z^A_t dW_t\right]  - \frac{1}{2} \phi''(A^L_t) (Z^A_t)^2 dt \\
&=& \frac{1}{\eta_t} f(U^L_t) dt- \frac{1}{2} \phi''(A^L_t) (Z^A_t)^2dt  + Z^{U^L} dW_t . 
\end{eqnarray*}
Note that $\phi''(x) = f'(\phi(x)) f(\phi(x)) \geq 0$, thus $\frac{1}{2} \phi''(A^L_t) (Z^A_t)^2 \geq 0$. 
Since $U^L_T = \phi(1/L)$, from the comparison principle for monotone BSDE (see \cite[Proposition 5.34]{pard:rasc:14}) and the construction of $Y$ by approximation, we obtain that $Y_t \geq U^L_t$. Passing through the limit on $L$ leads to the conclusion. 
\end{proof}

To finish this section, let us give an estimate of $Z^Y$. Let us also emphasize that this upper bound is valid for any solution of the BSDE \eqref{eq:sing_BSDE}, since the proof only uses the dynamics on $[0,T)$ and the a priori estimate \eqref{eq:a_priori_estim_Y} on $Y$, but not the construction by penalization of $Y$. In the power case ($f(y)=-y|y|^q$), it is known (see \cite{bank:voss:18,popi:06}) that 
$$\bE \left[ \left(  \int_0^T(T-s)^{2/q}(Z^Y_s)^2 ds \right) \right]   <+\infty.$$
%

\begin{Lem} \label{lem:estim_ZY}
Assume that $f$ is concave. Any solution $(Y,Z^Y)\in \bS^\infty(0,T-)$ of \eqref{eq:sing_BSDE} satisfies for all $p \in [1,+\infty)$
$$\bE \left[ \left(  \int_0^T \dfrac{-f'(Y_s)}{(f(Y_s))^2} (Z^Y_s)^2 ds \right)^p \right]   <+\infty.$$

\end{Lem}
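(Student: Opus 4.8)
The starting point is the observation that the weight in the statement is a second derivative of $G$. Since $f\le 0$ on $[0,\infty)$ with $f<0$ on $(0,\infty)$ (after the reduction to \ref{A3}), the function $G$ from \eqref{eq:def_G} satisfies $G'(x)=1/f(x)$ and $G''(x)=-f'(x)/f(x)^2$, so the integrand in the statement is exactly $G''(Y_s)(Z^Y_s)^2$. The plan is therefore to apply It\^o's formula to $G(Y)$ on $[0,T-\eps]$ — where everything is finite, since $Y$ is bounded there by \eqref{eq:a_priori_estim_Y} and the $C^2$-smoothness of $G$ makes $G''(Y)$ bounded — to isolate the term $\tfrac12\int G''(Y)(Z^Y)^2$, and to let $\eps\downarrow 0$.

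Using $G'(Y_s)f(Y_s)=1$, It\^o's formula gives, for $0\le t\le T-\eps$,
$$\tfrac12\int_t^{T-\eps} G''(Y_s)(Z^Y_s)^2\,ds = G(Y_{T-\eps})-G(Y_t)+\int_t^{T-\eps}\Big[\tfrac1{\eta_s}+G'(Y_s)\lambda_s\Big]ds-\int_t^{T-\eps}G'(Y_s)Z^Y_s\,dW_s.$$
Write $I_\eps$, $M_\eps$, $R_\eps$ for the three terms (taking $t=0$), so that $\tfrac12 I_\eps+M_\eps=R_\eps$. The first task is to bound $R_\eps$ by a deterministic constant, for which I need a uniform positive lower bound on $Y$: by \eqref{eq:lower_Y_estimate} — which holds for any solution, since the comparison argument of Lemma~\ref{lem:lower_bound} only needs the terminal value to eventually dominate $\phi(1/L)$ on $[0,T-\eps]$ and then lets $\eps\downarrow0$, $L\to\infty$ — together with \eqref{eq:bound_A}, one has $Y_s\ge\phi(A_s)\ge\phi(T/\etamin)=:c>0$. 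As $f$ is decreasing, $f(Y_s)\le f(c)<0$, so $|G'(Y_s)|=1/(-f(Y_s))\le 1/(-f(c))$ is bounded and $0\le G(Y_{T-\eps}),G(Y_0)\le G(c)<\infty$. Combined with \ref{A1}, this gives $|R_\eps|\le C$ with $C$ deterministic and independent of $\eps$.

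The heart of the argument is a self-referential $L^p$ estimate. The quadratic variation of $M$ can be controlled by $I$: since $G'(Y_s)^2/G''(Y_s)=1/(-f'(Y_s))$, we have $\langle M\rangle_\eps=\int_0^{T-\eps}\frac{1}{-f'(Y_s)}G''(Y_s)(Z^Y_s)^2\,ds$. Here concavity of $f$ enters decisively: $f$ concave makes $f'$ non-increasing, so $-f'(Y_s)\ge -f'(c)>0$ for $Y_s\ge c$ (the strict inequality $f'(c)<0$ holds because otherwise $f'\equiv 0$ hence $f\equiv 0$ on $[0,c]$, contradicting $f<0$ on $(0,c]$). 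Thus $\langle M\rangle_\eps\le (-f'(c))^{-1}I_\eps$. From $I_\eps\le 2|R_\eps|+2|M_\eps|$ and the Burkholder-Davis-Gundy inequality, for $p\ge 2$,
$$\bE[I_\eps^p]\le C_p\Big(C^p+\bE[\langle M\rangle_\eps^{p/2}]\Big)\le C_p\Big(C^p+(-f'(c))^{-p/2}\bE[I_\eps^{p/2}]\Big)\le C_p'\Big(1+\bE[I_\eps^p]^{1/2}\Big),$$
using $\bE[I_\eps^{p/2}]\le\bE[I_\eps^p]^{1/2}$. Since $\bE[I_\eps^p]<\infty$ a priori and the constants are independent of $\eps$, the quadratic inequality $x\le C_p'(1+x^{1/2})$ bounds $\bE[I_\eps^p]$ uniformly in $\eps$. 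Monotone convergence as $\eps\downarrow0$ yields the claim for $p\ge 2$, and the cases $p\in[1,2)$ follow by Jensen's inequality.

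The main obstacle is precisely the uniform bound on $1/(-f'(Y))$: this is where concavity of $f$ is used, together with the lower bound $Y\ge c$. Without it the quadratic variation of $M$ cannot be reabsorbed into $I$, and the self-referential estimate cannot be closed.
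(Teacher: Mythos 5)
Your proof is correct and follows essentially the same route as the paper's own: It\^o's formula applied to $G(Y)$, the lower bound $Y\ge\phi(A)\ge\phi(T/\etamin)$ to bound the non-martingale terms, BDG together with the concavity bound $1/(-f'(Y_s))\le 1/(-f'(c))$ to reabsorb the bracket of the stochastic integral into the target quantity, and the resulting self-referential inequality $x\le C(1+x^{1/2})$ closed by monotone convergence. The differences are cosmetic — you additionally justify that $-f'(c)>0$ and that \eqref{eq:lower_Y_estimate} extends to an arbitrary solution, two points the paper uses implicitly.
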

\begin{proof}
From \eqref{eq:lower_Y_estimate}, $Y$ remains bounded away from zero on $[0,T]$. Thus let us apply the function $G$ to $Y$:
\begin{eqnarray*}
G(Y_t) -G(Y_0) & = &  \int_0^t \frac{1}{f(Y_s)} \left(-\frac{1}{\eta_s} f(Y_s) - \lambda_s \right) ds + \int_0^t \frac{1}{f(Y_s)} Z^Y_s dW_s \\
& + &\frac{1}{2} \int_0^t \frac{-f'(Y_s)}{(f(Y_s))^2}(Z^Y_s)^2 ds. 
\end{eqnarray*}
Hence
$$
0\leq \frac{1}{2} \int_0^t \frac{-f'(Y_s)}{(f(Y_s))^2}(Z^Y_s)^2 ds \leq G(Y_t)  +  \int_0^t  \left(\frac{1}{\eta_s}  \right) ds - \int_0^t \frac{1}{f(Y_s)} Z^Y_s dW_s.$$
Now for $p\geq1$, there exists $C_p$ such that 
\begin{eqnarray*}
0\leq \left(  \int_0^t \frac{-f'(Y_s)}{(f(Y_s))^2}(Z^Y_s)^2 ds \right)^p \leq C_p \left( (G(Y_t))^p  + \frac{t^p}{\etamin^p}
 +\sup_{u\in [0,t]} \left| \int_0^u \frac{1}{f(Y_s)} Z^Y_s dW_s \right|^p \right).
\end{eqnarray*}
From Equations \eqref{eq:bound_A} and \eqref{eq:lower_Y_estimate}, since $G$ is non-increasing
$$0\leq G(Y_t) \leq G(\phi(A_t))\leq G(\phi(T/\etamin)).$$
Taking the expectation and using BDG's inequality we obtain for any $t<T$
\begin{eqnarray*}
\bE\left[ \left(  \int_0^t \frac{-f'(Y_s)}{(f(Y_s))^2}(Z^Y_s)^2 ds \right)^p \right] &\leq &C_p \left( (G(\phi(T/\etamin)))^p  + \frac{T^p}{\etamin^p} \right) \\
& + &C_p  \bE  \left[ \left( \int_0^t \frac{1}{(f(Y_s))^2} (Z^Y_s)^2 ds \right)^{p/2} \right].
\end{eqnarray*}
Since $-f'$ is non-decreasing (that is $f$ is concave), for any $s \in [0,T]$
$$0\leq  \frac{1}{-f'(Y_s)} \leq \frac{1}{-f'(\phi(A_s))}\leq \frac{1}{-f'(\phi(T/\etamin))}<+\infty. $$
Therefore for any $t<T$
\begin{eqnarray*}
\bE\left[ \left(  \int_0^t \frac{-f'(Y_s)}{(f(Y_s))^2}(Z^Y_s)^2 ds \right)^p \right] &\leq &C+ C  \bE  \left[ \left( \int_0^t \frac{-f'(Y_s)}{(f(Y_s))^2} (Z^Y_s)^2 ds \right)^{p/2} \right].
\end{eqnarray*}
Using Jensen's inequality if $\varpi = \bE\left[ \left(  \int_0^t \frac{-f'(Y_s)}{(f(Y_s))^2}(Z^Y_s)^2 ds \right)^p \right] $, $\varpi  \leq C+ C \varpi^{1/2}$, hence for any $t< T$
$$\bE \left[ \left(  \int_0^t \frac{-f'(Y_s)}{(f(Y_s))^2}(Z^Y_s)^2 ds \right)^p \right] \leq C. $$
By monotone convergence theorem, we obtain the desired estimate. 
\end{proof}

Using the monotonicity of $f$ and $f'$, using \eqref{eq:a_priori_estim_Y} and \eqref{eq:lower_Y_estimate}, we get
$$  \bE \left[ \left(  \int_0^T \frac{-f'(\phi((T-s)/\etamin))}{ f(\vartheta(T-s))^2}(Z^Y_s)^2 ds \right)^p \right]  <+\infty.$$
In the power case $f(y) = -y|y|^q$,  $\phi(x) =\left(  \dfrac{1}{qx} \right)^{\frac{1}{q}}$. Since $\theta(T-t)$ is equivalent to $(T-t)^{-1/q}$, we have
$$\bE \left[ \left(  \int_0^T(T-s)^{2/q+1}(Z^Y_s)^2 ds \right) \right]   <+\infty.$$
Hence this estimate is not optimal. Nevertheless it is sufficient for our purpose in the proof of Proposition \ref{prop:existence_sol_BSDE_sing_gen} in Section \ref{ssect:proof_main_thm} below.

\section{Asymptotic behaviour: the main result} \label{sect:asymp_beha}

Recall that $A$ is given by \eqref{eq:def_A} and satisfies  \eqref{eq:dynamics_A} and $\phi$ verifies: $\phi' = f\circ \phi$. 
For
\[
-dY_t=\frac{1}{\eta_t} f(Y_t)\,dt+\lambda_t\,dt-Z^Y_t\,dW_t
\]
we make the ansatz \eqref{eq:asymp_beha_Y}, that is $Y_t=\phi(A_t) - \phi'(A_t)H_t.$ Hence we obtain the heuristic dynamics of $H$, namely:
\begin{eqnarray} \nonumber
	-dH_t&=&\frac{1}{\eta_t (-\phi'(A_t))} \left[ f(\phi(A_t)-\phi'(A_t) H_t) - f(\phi(A_t)) +f'(\phi(A_t)) \phi'(A_t) H_t\right] dt \\ \nonumber
	& +& \left[ -\frac{\lambda_t}{\phi'(A_t)} - \frac{\phi^{(2)}(A_t)  A_t }{2\phi'(A_t)} \frac{(Z^A_t)^2}{A_t} \right] dt\\ \nonumber
	& +& \left[ \frac{\phi^{(3)}(A_t) (A_t)^2}{2\phi'(A_t)}  \left( \frac{Z^A_t}{A_t} \right)^2 H_t -\frac{A_t \phi^{(2)}(A_t) }{\phi'(A_t)} \frac{Z^A_t}{A_t} Z^H_t \right] dt -Z^H_t\,dW_t \\ \nonumber
	&=&\frac{1}{\eta_t (-\phi'(A_t))} \left[ f(\phi(A_t)- \phi'(A_t) H_t) - f(\phi(A_t)) + f'(\phi(A_t)) \phi'(A_t) H_t\right]  dt \\ \label{eq:dyn_H_sym_case}
	& + &\left[- \frac{\lambda_t}{\phi'(A_t)} + \frac{\kappa^1_t}{2} \frac{(Z^A_t)^2}{A_t}  +\frac{ \kappa_t^1  \kappa^2_t}{2}  \left( \frac{Z^A_t}{A_t} \right)^2 H_t +  \kappa_t^1 \frac{Z^A_t}{A_t} Z^H_t \right] dt -Z^H_t\,dW_t ,
\end{eqnarray}
where 
\begin{equation*}
\kappa^1_t = -A_t \frac{\phi^{(2)}(A_t)  }{\phi'(A_t)}, \qquad 
\kappa^2_t =   -A_t \frac{\phi^{(3)}(A_t)}{\phi^{(2)}(A_t)} .
\end{equation*}
Note that from \eqref{eq:lower_Y_estimate} and since $\phi' \leq 0$, $H_t \geq 0$ a.s. Hence the dynamics of $H$ is given by:
\begin{equation}\label{eq:dyn_BSDE_H}
- d H_t = F^H(t,H_t,Z^H_t) dt +  Z^H_t dW_t,
\end{equation}
where $F^H = F+ L$ with
\begin{eqnarray} \label{eq:gene_F}
F(t,h) &= &\frac{-1}{\eta_t \phi'(A_t)} \left[ f(\phi(A_t)-\phi'(A_t) h) - f(\phi(A_t)) +f'(\phi(A_t)) \phi'(A_t) h \right]  \mathbf 1_{h\geq 0}, \\ \label{eq:gene_L}
L(t,h,z) & = &- \frac{\lambda_t}{\phi'(A_t)} + \frac{\kappa^1_t}{2} \frac{(Z^A_t)^2}{A_t} + \frac{ \kappa^1_t \kappa^2_t}{2}  \left( \frac{Z^A_t}{A_t} \right)^2 h + \kappa^1_t \frac{Z^A_t}{A_t} z .
\end{eqnarray}
$L$ stands for the linear part of the generator of $H$, even if there is also a linear part in $F$:
$$h \geq 0 \mapsto \frac{-f'(\phi(A_t))}{\eta_t } h = \frac{\kappa^1_t }{\eta_t A_t}  h.$$ 
Note that the generator $F$ is {\it singular} in the sense of \cite{jean:reve:14}, since in general
$$\int_0^T \frac{\kappa^1_t }{\eta_t A_t}  dt =+\infty.$$
It is proved in \cite[Propositions 3.1 and 3.5]{jean:reve:14} that the terminal condition on $H$ should be zero. Hence we adopt their definition (\cite[Definition 2.1]{jean:reve:14}) of a solution. 
\begin{Def}[BSDE with singular generator] \label{def:sing_gene_sol}
We say that $(H,Z^H)$ solves the BSDE \eqref{eq:BSDE_H} if the relation \eqref{eq:dyn_BSDE_H} holds a.s. for any $t \in [0,T]$ and if 
$$\bE \left[  \int_0^T |F^H(s,H_s,Z^H_s)| ds +\left(  \int_0^T (Z^H_s)^2 ds \right)^{\frac{1}{2}} \right] < +\infty.$$
\end{Def}

We add some additional conditions on $f$:
\begin{enumerate}[label=\textbf{(C\arabic*)}]
\item \label{C1_concave} {\it $f$ is concave and of class $C^2$ on $(0,+\infty)$. }
\item \label{C2_tech} {\it There exist two constants $\delta > 0$ and $R > 0$ such that $x \mapsto \displaystyle G(x)^{-\delta} =  \left( \int_x^\infty \dfrac{1}{-f(y)} dy \right)^{-\delta}$ is convex on $[R,+\infty)$.}

\item \label{C3_tech} {\it If $\psi^\delta$ is the increasing and concave function $\psi^\delta : x \mapsto G^{-1} (x^{-1/\delta})$ for $x>0$, then $(-f)\circ \psi^\delta$ is also increasing and concave on a neighborhood of $+\infty$.}
\end{enumerate}
Let us emphasize that the conditions \ref{C2_tech} and \ref{C3_tech} only involve the function $f$ on some interval $[R,+\infty)$ and the value of $R$ may be large. Under \ref{C2_tech}, we know that $\psi^\delta$ is increasing. From \ref{C1_concave}, $-f'$ is a non-decreasing function and there exists a rank such that for any $x$ greater than this rank, $-f' > 0$. In other words $(-f)\circ\psi^\delta$ is an increasing function, at least on a neighborhood of $\infty$. Hence the main assumption in \ref{C3_tech} is the concavity of $-f \circ\psi^\delta$.

Finally our last condition on $f$ is the following. Let us define for some $\rho \in(0,1)$, the non-negative function $h(y)=-f(y) G(y)^{1-\rho}$.
\begin{enumerate}[label=\textbf{(C\arabic*)}]
\setcounter{enumi}{3}
\item \label{C4_tech} {\it There exists $\rho \in (0,1)$ such that the function $y \mapsto \dfrac{y}{h(y)}$ remains bounded on a neighborhood of $+\infty$.}
\end{enumerate}
Again this property only depends on the behavior of $f$ near $+\infty$. Note that 
$$\int_y^{+\infty} \frac{1}{h(t)}dt = \frac{1}{\rho} G(y)^\rho,$$
thus $1/h$ is integrable on $[1,+\infty)$. It is known (see \cite[Section 178]{hardy:08}) that if $h$ is non-decreasing, then we have
$$\lim_{y\to +\infty} \frac{y}{h(y)} = 0.$$
Thus \ref{C4_tech} holds. But since 
$$h'(y) = G(y)^{-\rho} \left[-f'(y) G(y) - (1-\rho)\right] = G(y)^{-\rho} \left[ \frac{G''(y) G(y) +(\rho-1) (G'(y))^2}{(G'(y))^2} \right],$$
the non-negativity of $h'$ is equivalent to the non-negativity of the second derivative of $G^\rho$. In other words $h$ is non decreasing if and only if $G^\rho$ is convex. 

To state our main result, we assume some regularity on the process $\gamma:=\dfrac{1}{\eta}$. Let us suppose that $\gamma$ is an It\^o process of the form:
\begin{equation} \label{eq:gamma_ito_proc}
d\gamma_t=b^\gamma_t\,dt+\sigma^\gamma_t\,dW_t.
\end{equation}
Note that with Condition \ref{A1}, it is equivalent to assume that $\eta$ is an It\^o process. We also consider the stronger case:
\begin{equation} \label{eq:gamma_SDE}
d\gamma_t=b(\gamma_t) \,dt+\sigma(\gamma_t) \,dW_t.
\end{equation}
In the rest of the paper we call \ref{H_diff} the next condition:
\begin{enumerate}[label=\textbf{(H)}]
\item \label{H_diff} {\it The process $1/\eta$ satisfies 
\begin{itemize}
\item either the equation \eqref{eq:gamma_ito_proc} with bounded coefficients $b^\gamma$ and $\sigma^\gamma$;
\item or the SDE \eqref{eq:gamma_SDE} where the functions $b$ and $\sigma$ are Lipschitz continuous and $\sigma$ is bounded.
\end{itemize}
 }
\end{enumerate}
Let us now state our main result.
\begin{Thm} \label{thm:uniq_sing_BSDE}
Under the hypotheses {\rm \ref{A1}} to {\rm \ref{A3}}, {\rm \ref{C1_concave}} to {\rm \ref{C4_tech}} and {\rm \ref{H_diff}}, the BSDE \eqref{eq:sing_BSDE} has a unique solution $(Y,Z^Y)$ (in the sense of Definition \ref{def:sing_cond_sol}). This solution is given by \eqref{eq:asymp_beha_Y}: a.s. $Y_t = \phi(A_t) - \phi'(A_t) H_t$ for any $t\in [0,T]$, where $H$ is the unique solution of the BSDE \eqref{eq:BSDE_H}  (in the sense of Definition \ref{def:sing_gene_sol}).
\end{Thm}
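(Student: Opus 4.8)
The plan is to turn the singular \emph{terminal value} problem \eqref{eq:sing_BSDE} into the singular \emph{generator} problem \eqref{eq:BSDE_H} by means of the reversible change of variables \eqref{eq:asymp_beha_Y}, and then to prove existence and uniqueness for the latter. Recall that under \ref{A3} the map $\phi=G^{-1}$ is $C^2$, strictly decreasing, with $\phi'<0$ and $\phi(0^+)=+\infty$; hence the relation $Y_t=\phi(A_t)-\phi'(A_t)H_t$ is equivalent to
$$H_t=\frac{Y_t-\phi(A_t)}{-\phi'(A_t)},$$
so that solutions of the two equations are in one-to-one correspondence as soon as both substitutions are admissible.

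I would first treat the direction $Y\mapsto H$. Starting from the minimal solution $(Y,Z^Y)$ of Proposition \ref{prop:exist_min_sol}, define $H$ by the formula above; Lemma \ref{lem:lower_bound} gives $Y\geq\phi(A)$, and since $\phi'<0$ this yields $H\geq 0$. Using that $A$ solves \eqref{eq:dynamics_A} and applying It\^o's formula to $\phi(A_t)$ and to the product $\phi'(A_t)H_t$, one recovers exactly the dynamics \eqref{eq:dyn_BSDE_H} with generator $F^H=F+L$ given by \eqref{eq:gene_F}--\eqref{eq:gene_L}, which also identifies $Z^H$ in terms of $Z^Y$ and $Z^A$. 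The delicate point is the integrability required in Definition \ref{def:sing_gene_sol}: the singular part of $F$ is integrable thanks to the a priori bounds \eqref{eq:lower_Y_estimate} and \eqref{eq:a_priori_estim_Y} on $Y$, while the terms of $L$ carrying $Z^A$ and $Z^H$ are controlled through Lemma \ref{lem:estim_ZY} together with the boundedness of $\kappa^1,\kappa^2$ and of $Z^A/A$. That $H$ has terminal value $0$ is precisely the statement, following \cite[Propositions 3.1 and 3.5]{jean:reve:14}, that a singular generator forces a vanishing terminal condition. This already provides a solution of \eqref{eq:BSDE_H}, i.e.\ existence.

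The core of the argument is the uniqueness of the solution $H$ of \eqref{eq:BSDE_H}. Given two solutions $H^1,H^2$, I would write the linear BSDE satisfied by $\Delta H=H^1-H^2$, which has terminal condition $0$, zero-order coefficient $a_t=\partial_h F(t,\bar H_t)+\tfrac12\kappa^1_t\kappa^2_t(Z^A_t/A_t)^2$ (for an intermediate value $\bar H_t$) and coefficient $b_t=\kappa^1_t Z^A_t/A_t$ in front of $\Delta Z$. Concavity of $f$ (Condition \ref{C1_concave}) gives $\partial_h F\leq 0$, so the singularity of $F$ at $T$ --- responsible for $\int_0^T\kappa^1_t/(\eta_t A_t)\,dt=+\infty$ --- carries the favourable monotone sign and only helps; what must be controlled are the contributions of $L$, namely $\tfrac12\kappa^1_t\kappa^2_t(Z^A_t/A_t)^2$ and $\kappa^1_t Z^A_t/A_t$. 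This is where Conditions \ref{C2_tech}--\ref{C4_tech} enter, furnishing the boundedness of $\kappa^1$ and $\kappa^2$ from the behaviour of $\phi$ near $+\infty$, and where Condition \ref{H_diff} enters, yielding $Z^A=O(T-t)$ and hence, by \eqref{eq:bound_A}, the boundedness of $Z^A/A$. With $a$ bounded above and $b$ bounded, a Girsanov change of measure removing $b$ combined with the bound $a\leq C$ forces $\Delta H\equiv 0$.

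Finally I would close the loop for $Y$. Any solution $(\widetilde Y,\widetilde Z)$ of \eqref{eq:sing_BSDE} satisfies $\widetilde Y\geq Y$ by minimality, hence $\widetilde Y\geq\phi(A)$, so $\widetilde H:=(\widetilde Y-\phi(A))/(-\phi'(A))\geq 0$ is well defined; since the estimate \eqref{eq:a_priori_estim_Y} and Lemma \ref{lem:estim_ZY} hold for \emph{every} solution, $\widetilde H$ again solves \eqref{eq:BSDE_H}. Uniqueness of $H$ then gives $\widetilde H=H$, whence $\widetilde Y=\phi(A)-\phi'(A)H=Y$, which simultaneously proves uniqueness of $Y$ and the representation \eqref{eq:asymp_beha_Y}. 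I expect the main obstacle to be the uniqueness step, and more precisely the control of the linear terms inherited from the martingale part of $A$: unlike the explosion of $F$ at $T$, which is tamed by the favourable monotone sign and the forced zero terminal value, these terms are not localised near $T$ and their boundedness requires the full strength of \ref{C2_tech}--\ref{C4_tech} and \ref{H_diff}.
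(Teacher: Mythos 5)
Your overall architecture is the same as the paper's: define $H=(Y-\phi(A))/(-\phi'(A))$ from the minimal solution, verify that $(H,Z^H)$ solves \eqref{eq:BSDE_H} in the sense of Definition \ref{def:sing_gene_sol}, prove uniqueness for \eqref{eq:BSDE_H}, and close the loop by observing that the a priori estimate \eqref{eq:a_priori_estim_Y} and Lemma \ref{lem:estim_ZY} hold for \emph{any} solution of \eqref{eq:sing_BSDE}. However, there is a genuine gap in the way you invoke \ref{H_diff}. You claim that \ref{H_diff} yields $Z^A=O(T-t)$ and hence boundedness of $Z^A/A$, and you build both the integrability check and the whole uniqueness argument on this boundedness. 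This is false in general: in the first case of \ref{H_diff} (an It\^o process with bounded $b^\gamma,\sigma^\gamma$), one only gets $Z^A_t=\sigma^\gamma_t(T-t)+Z^{\widetilde A}_t$, where the martingale $\int Z^{\widetilde A}_s/(T-s)\,dW_s$ has small BMO norm near $T$; its integrand need not be bounded. The pointwise bound $|Z^{\widetilde A}_t|\leq C(T-t)^2|\sigma^\gamma_t|$ is derived in the paper only in the second (Markovian SDE) case, via Malliavin calculus and Clark--Ocone. What is actually available under \ref{H_diff} is Lemma \ref{lem:expo_moment_1}: an \emph{exponential moment} bound \eqref{eq:exponen_moment_Z^A} for $\int_{\widehat T}^T(Z^A_s/A_s)^2ds$, valid only on a deterministic interval $[\widehat T,T]$ chosen close enough to $T$ (so that the BMO norm is small and the John--Nirenberg/Nirenberg inequality applies). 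Consequently your Girsanov step must go through Novikov's criterion with these exponential moments, not through bounded coefficients, and it only delivers uniqueness on $[\widehat T,T]$. (For the existence/integrability step the paper does not need \ref{H_diff} at all: the terms in $L$ are controlled by the estimate \eqref{eq:estim_ZA} on $\int (Z^A)^2/A^{1+\rho}$, Condition \ref{C4_tech} to bound $H/A^{1-\rho}$, and Lemma \ref{lem:estim_ZY} through the identity \eqref{eq:link_ZH_ZY}.)

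The second, related omission is the extension of uniqueness from $[\widehat T,T]$ to $[0,\widehat T]$. This is not automatic: the generator $F^H$ remains singular on the whole of $[0,T]$ through its linear part in $z$ and $h$ (the coefficients $\kappa^1 Z^A/A$ and $\kappa^1\kappa^2(Z^A/A)^2$ are not localized at $T$, and away from $T$ no Novikov-type bound is claimed), so one cannot simply rerun the linear-BSDE comparison there. The paper resolves this with a specific trick: given another solution $\widehat H$ agreeing with $H$ at $\widehat T$, set $\widehat Y=\phi(A)-\phi'(A)\widehat H$ on $[0,\widehat T]$; then $\widehat Y$ solves the BSDE \eqref{eq:sing_BSDE} with the \emph{bounded} terminal condition $\phi(A_{\widehat T})-\phi'(A_{\widehat T})\widehat H_{\widehat T}$, for which uniqueness is classical (monotone generator), whence $\widehat H=H$ on $[0,\widehat T]$ as well. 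Without replacing your bounded-coefficient Girsanov argument by the Novikov-near-$T$ argument plus this back-transformation step, the proof does not go through under the stated hypothesis \ref{H_diff}.
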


\subsection{Examples} \label{ssect:examples}

Let us study several functions $f$, ordered by their ``non linearity'', and verify the conditions of the previous theorem. 

 If $f(y)=-(y+1)|\log(y+1)|^q$ for some $q>1$ and $y\geq 0$, all conditions {\rm \ref{A1}}, {\rm \ref{A2}} and {\rm \ref{A3}} are verified and if $p$ is the H\"older conjugate of $q$ then 
$$\forall x > 0, \quad G(x) =\dfrac{1}{q-1} \log(x+1)^{1-q},\quad \phi(x) = \exp\left( \left((q-1)x\right)^{1-p}\right)-1.$$
Direct computations show that $ - \dfrac{\phi''(x)}{\phi'(x)} x =  \left[ (p-1)^p  x^{-p} + p\right]$
is not a bounded function near zero and for any $\delta > 0$, $G^{-\delta}$ is not convex. 
Somehow this function $f$ is ``not enough non linear''. In other words, for this class of functions, we only have existence of a minimal solution (Proposition \ref{prop:exist_min_sol}). 

All of the next functions verify {\rm \ref{C1_concave}} to {\rm \ref{C3_tech}}. 
\begin{itemize}
\item {\it Power case: } $f(y)=-y|y|^q$ for some $q > 0$. Then $G(x) = \dfrac{1}{qx^q}$ and $\phi(x) = \left(\dfrac{1}{qx}\right)^{\frac{1}{q}}$. The assumption {\rm \ref{C2_tech}} holds for any $\delta > 0$ and $$ - \frac{\phi''(x)}{\phi'(x)} x = \frac{q+1}{q},\quad - \dfrac{\phi^{(3)}(x)}{\phi^{(2)}(x)} x = \dfrac{1+2q}{q}.$$

\item {\it Exponential case:}  $f(y) = -(\exp(ay)-1)$ for some $a> 0$. Then 
$$G(x) = - \frac{1}{a} \log \left(1-e^{-ax}\right) = G^{-1}(x)=\phi(x).$$
And
$$ - \frac{\phi^{(2)}(x)}{\phi'(x)} x = \frac{a x e^{ax}}{e^{ax}-1}\underset{x\to 0}{\sim} 1, \quad -\frac{\phi^{(3)}(x)}{\phi^{(2)}(x)}x = \frac{a x (1+e^{ax})}{e^{ax}-1} \underset{x\to 0}{\sim} 2,$$
are bounded near zero. From the Lemmata \ref{lem:bounded_cond} and \ref{lem:bound_coef_kappa_2}, \ref{C2_tech} and \ref{C3_tech} hold. 

. 

\item {\it Normal case:} $f(y)= -\exp(a y^2)$ for some $a> 0$ and $y\geq 0$ (note that $f(0)=-1$ to simplify the computations). Then 
$$G(x) = \sqrt{\frac{\pi}{ a}} \left[1-\cN (x\sqrt{2a})\right],\quad \phi(x)=\dfrac{1}{\sqrt{2a}}\cN^{-1}\left( 1-x \sqrt{\dfrac{a}{\pi }}\right),$$
where $\cN(\cdot)$ is the cumulative distribution function of the normal law. 
Thereby 
$$\phi'(x) = -\exp(a\phi(x)^2), \quad \phi^{(2)}(x) = 2a\phi(x) (\phi'(x))^2,$$
and with $\sqrt{a} \phi(x)=z/\sqrt{2}$ 
\begin{eqnarray*}
- \frac{\phi''(x)}{\phi'(x)} x &=& 
z\sqrt{2\pi} \exp(z^2/2) \left[1-\cN (z)\right] \underset{z\to +\infty}{\sim} 1.
\end{eqnarray*}
using the classical tail estimate of the normal law. Hence $x\phi''(x)/\phi'(x)$ is bounded near zero and {\rm \ref{C2_tech}} holds (again by Lemma \ref{lem:bounded_cond}).

Since
$$ \phi^{(3)}(x)=2a (\phi'(x)^3) + 4a \phi(x) \phi'(x) \phi^{(2)}(x) ,$$
Thus
$$ -\frac{\phi^{(3)}(x)}{\phi^{(2)}(x)}x = -x\frac{\phi'(x)}{\phi(x)} - 2x \frac{\phi^{(2)}(x)}{\phi'(x)}.$$
From \ref{C2_tech} and Lemma \ref{lem:bounded_cond}, the second term is bounded. Arguing as above yields to:
$$-x\frac{\phi'(x)}{\phi(x)}  = \dfrac{\sqrt{2a}}{z} G  \left( \dfrac{z}{\sqrt{2a}} \right) e^{-z^2/2} \underset{z\to +\infty}{\longrightarrow} 0.$$
From Lemma \ref{lem:bound_coef_kappa_2}, \ref{C3_tech} is verified.  
\end{itemize}
For all function direct computations show that $\lim_{y\to +\infty} \dfrac{y}{h(y)} =0$, for any $\rho \in (0,1)$. Hence \ref{C4_tech} holds.

\subsection{Properties of the generator $F^H$} \label{ssect:generator_f_H}

Let us describe the properties of the generator $F^H$ given by \eqref{eq:gene_F} and  \eqref{eq:gene_L}. 
\begin{itemize}
\item $F^H(t,h,z)=L(t,h,z)$, for any $h\leq 0$.
\item $F$ is continuous and monotone w.r.t. $h$: for any $h$ and $h'$,
$$(h-h')(F(t,h)-F(t,h')) \leq \frac{\kappa^1_t }{\eta_t A_t} (h-h')^2,$$
since $f$ is itself monotone.
\item For any $|h| \leq r$, 
$$|F(t,h)-F(t,0)| \leq \frac{\kappa^1_t }{\eta_t A_t} r - \frac{f(\phi(A_t)+\phi'(A_t) r)}{\phi'(A_t) \eta_t}.$$ 
\item $L$ is linear w.r.t. $h$ and $z$ with
$$|L(t,h,z)-L(t,h',z')| \leq \frac{ \kappa^1_t \kappa^2_t}{2}  \left( \frac{Z^A_t}{A_t} \right)^2 |h-h'| +  \left| \kappa^1_t \frac{Z^A_t}{A_t} \right| |z-z'|.$$
\item The process $F^H(\cdot,0,0)$ is equal to
$$F^H(t,0,0)= L(t,0,0)=\frac{\kappa^1_t}{2} \frac{(Z^A_t)^2}{A_t} - \frac{\lambda_t}{\phi'(A_t)}.$$
\end{itemize}
Therefore, to deduce existence or uniqueness of the solution of \eqref{eq:BSDE_H}, we cannot directly use some results in \cite{jean:reve:14} or in \cite{pard:rasc:14}. Indeed if we consider this BSDE \eqref{eq:BSDE_H} with $f\equiv 0$, we get a linear BSDE. From our best knowledge, existence of a solution is proved only under some exponential moment condition on the coefficients (see \cite[Proposition 5.31]{pard:rasc:14}). Even if we avoid the final time $T$, then $1/A$ is bounded on $[0,T-\eps]$ (Inequality \eqref{eq:bound_A}), but $Z^A$ is only BMO. Hence the stochastic exponential of the martingale $M=\int_0^\cdot Z^A_s dW_s$ is uniformly integrable. But controlling the exponential of the bracket of $M$ is more difficult. If $\xi = \int_0^T 1/\eta_s ds$ is Malliavin differentiable, then we require that its Malliavin derivative has exponential moments.

Let us emphasize if we do not control the quantity $(Z^A)^2/A^2$, then from \cite[Proposition 3.1]{jean:reve:14}, we may have infinitely many solutions.

\subsubsection{On the coefficients $\kappa^1$ and $\kappa^2$}

To obtain some estimates on $\kappa^i$, $i=1,2$, which depend essentially on $\phi$ and its derivatives, we need additional conditions on $f$. Recall that $\phi$ is a positive function and it solves, $\phi' = f\circ \phi$. Thus $\phi' \leq 0$ and $\phi^{(2)} = (f'\circ \phi) \phi' \geq 0$ from Assumption \ref{A2}. Note that the functions $\phi$ and $\phi'$ are continuous and bounded on $[\epsilon,+\infty)$ for any $\epsilon> 0$ and $\phi'$ never reaches zero on compact subset on $(0,\infty)$. Hence the processes $\kappa^1$ and $F^H(\cdot,0,0)$ are bounded on any time interval $[0,T-\epsilon]$ for $0< \epsilon <T$ and are nonnegative. Moreover since $\phi(0) = +\infty$, the condition \ref{A3} implies that $\phi'(0) = f(+\infty)=-\infty$. Using \eqref{eq:bound_A} and \ref{A1}, we deduce that $\lambda/\phi'(A)$ is bounded on $[0,T]$ by $ \|\lambda\|(-\phi'(T/\etamin))^{-1}$.

The next lemma provides a necessary and sufficient condition for the boundedness of the process $\kappa^1$.  
\begin{Lem} \label{lem:bounded_cond}
The next two assertions are equivalent.
\begin{enumerate}
\item There exists a constant $\delta > 0$ and $R > 0$ such that $x \mapsto G(x)^{-\delta}$ is convex on $[R,+\infty)$ (assumption {\rm \ref{C2_tech}}). 
\item The function $\phi$ verifies the next property: there exists $K > 1$ and $\varrho > 0$ such that for all $x \in (0,\varrho ]$: $\left| x \dfrac{\phi''(x)}{\phi'(x)}  \right| \leq K.$
\end{enumerate}
The constants are related by: $\varrho = 1/R$ and $ \delta = K-1$.
\end{Lem}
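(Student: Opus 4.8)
The plan is to show that both assertions are, after one change of variables, literally the same statement: the boundedness near $+\infty$ of the single scalar quantity $-G(x)\,f'(x)$. Everything reduces to elementary calculus together with the two identities $\phi=G^{-1}$ and $\phi'=f\circ\phi$; the only points demanding attention are the bookkeeping of signs and the translation of the region $[R,+\infty)$ (argument of $G$ large) into the region $(0,\varrho]$ (argument of $\phi$ small).

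First I would treat the $\phi$-side of assertion (2). Differentiating $\phi'=f\circ\phi$ gives $\phi''=(f'\circ\phi)\,\phi'$, whence $\phi''(x)/\phi'(x)=f'(\phi(x))$. Since $\phi=G^{-1}$ we have $x=G(\phi(x))$, so
\[
x\,\frac{\phi''(x)}{\phi'(x)} = G(\phi(x))\, f'(\phi(x)).
\]
Because $f<0$ on $(0,\infty)$ (recall $f(0)=0$, $f$ non-increasing, and \ref{A3}, which forces $f$ not to vanish on $(0,\infty)$), one has $\phi'=f\circ\phi\le 0$ and, since $f'\le 0$ by \ref{A2}, also $\phi''=(f'\circ\phi)\phi'\ge 0$; hence the quantity above is non-positive and $\left|x\,\phi''(x)/\phi'(x)\right|=-G(y)\,f'(y)$ with $y:=\phi(x)$. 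As $\phi$ is decreasing with $\phi(0^+)=+\infty$, the substitution $y=\phi(x)$ maps $(0,\varrho]$ bijectively onto $[\phi(\varrho),+\infty)$, so assertion (2) says exactly that $-G(y)\,f'(y)\le K$ for all $y\ge\phi(\varrho)$.

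Next I would treat assertion (1). Writing $g=G^{-\delta}$ and differentiating twice yields $g''=\delta\,G^{-\delta-2}\big[(\delta+1)(G')^2-G\,G''\big]$, so, using $\delta>0$ and $G>0$, convexity of $g$ on $[R,+\infty)$ is equivalent to $(\delta+1)(G')^2\ge G\,G''$ there. Substituting $G'=1/f$ and $G''=-f'/f^2$ and multiplying by $f^2>0$, this reduces to $\delta+1\ge -G(x)\,f'(x)$ for all $x\ge R$. Thus assertion (1) says exactly that $-G(x)\,f'(x)\le \delta+1$ for all $x\ge R$.

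Comparing the two reformulations closes the argument: both state that $-G\,f'$ is bounded by a constant on a neighbourhood of $+\infty$, and matching them gives $K=\delta+1$ (that is $\delta=K-1$) together with the correspondence between the thresholds $R$ and $\varrho$ obtained from the monotone change of variables $x=G(y)$, which is the relation asserted in the lemma. I do not foresee a genuine obstacle here: the proof is a direct computation. The only place really demanding care is verifying that $f$ does not vanish on $(0,\infty)$ so that $G'$, $G''$ and all the divisions are legitimate, and keeping the two regions (argument of $G$ large versus argument of $\phi$ small) correctly aligned through the substitution.
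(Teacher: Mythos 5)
Your proof is correct, and it takes a genuinely different and more elementary route than the paper. You reduce both assertions, by direct differentiation, to the single scalar condition that $-G(y)f'(y)$ is bounded near $+\infty$: on the $\phi$-side via $\phi''/\phi'=f'\circ\phi$ and $x=G(\phi(x))$, and on the $G$-side via $g''=\delta G^{-\delta-2}\bigl[(\delta+1)(G')^2-GG''\bigr]$ with $G'=1/f$, $G''=-f'/f^2$. The paper instead substitutes $\varphi(x)=\phi(1/x)$, computes $-x\phi''(x)/\phi'(x)=2+\tfrac{1}{x}\varphi''(1/x)/\varphi'(1/x)$, and invokes Pratt's comparison theorem for Arrow--Pratt risk-aversion coefficients ($\alpha_\varphi\geq\alpha_{K-2}$ iff $\varphi=\psi\circ u_K$ with $\psi$ increasing concave, $u_K(t)=t^{K-1}$) to land on the convexity of $G^{1-K}$. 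Both yield $\delta=K-1$; your computation buys transparency and avoids an external theorem, and the intermediate characterization ``$-Gf'$ bounded near $+\infty$'' is a clean invariant formulation, while the paper's route explains where the exponent $u_K(t)=t^{K-1}$ comes from conceptually. One trivial caveat: your change of variables $y=\phi(x)$ actually aligns the thresholds as $R=\phi(\varrho)$, i.e.\ $\varrho=G(R)$, not $\varrho=1/R$ as you assert in the last sentence (the paper's own $\varrho=1/R$ is equally loose, since its Pratt step places the convexity on $[\phi(1/R),+\infty)$ rather than $[R,+\infty)$); since both assertions are existential over a neighborhood of $+\infty$ (resp.\ of $0$), this has no bearing on the equivalence or on $\delta=K-1$.
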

\begin{proof}
Remark that 
$ x \dfrac{\phi''(x)}{\phi'(x)}  \leq 0$. Hence it is enough to show that there exists $K > 0$ such that $- x \dfrac{\phi''(x)}{\phi'(x)} \leq K$. W.l.o.g. we can assume that $K > 1$. 
Now let us define $\varphi$ by $\varphi(x)=\phi(1/x)$ for any $x>0$. Then 
$$ \phi'(x) =-\frac{1}{x^2}\varphi'(1/x) , \qquad \phi''(x) = \frac{2}{x^3} \varphi'(1/x) + \frac{1}{x^4} \varphi''(1/x).$$
Thus
$$-x \dfrac{\phi''(x)}{\phi'(x)}  =  2+ \frac{1}{x} \frac{\varphi''(1/x)}{\varphi'(1/x)}.$$
Hence to establish Lemma \ref{lem:bounded_cond} it is sufficient to prove that there exists $K>1$ and $\varrho >0$ such that for all $t\geq 1/\varrho =R$,
\[
- t \frac{\varphi''(t)}{\varphi'(t)}\geq 2-K = -(K-2).
\]
Let us rewrite this condition in terms of the so-called Arrow-Pratt coefficient of absolute risk aversion by interpreting $\varphi$ as utility function,
\begin{equation} \label{pratt-condition}
\alpha_\varphi(t):=-\frac{\varphi''(t)}{\varphi'(t)}\geq -\frac{(K-2)}{t}=:\alpha_{K-2}(t),
\end{equation}
where the utility function to $\alpha_K$ is given (up to positive affine transformations) by $u_K(t) = t^{K-1}$. 
By a classical theorem due to Pratt (\cite{prat:64}, see also \cite[Proposition 2.44]{foel:schi:11}), Condition \eqref{pratt-condition} holds if and only if
\begin{equation} \label{pratt-transformation}
\varphi=\psi \circ u_K
\end{equation}
for a strictly increasing concave function $\psi$. As $\varphi=G^{-1}(1/\cdot)$, Pratt's condition is equivalent to
\[
\psi(t):=G^{-1}\left(\frac{1}{u_K^{-1}(t)}\right) = G^{-1} \left( t^{-\frac{1}{K-1}} \right).
\]
defines a strictly increasing concave function. In other words $\displaystyle x \mapsto G(x)^{1-K}$ is strictly increasing and convex.
This achieves the proof of the Lemma.
%
\end{proof}

Under the condition \ref{C2_tech}, 
using \eqref{eq:bound_A}, 
$A_t \leq 1/R$, if $T- \frac{\etamin}{R} \leq t \leq T $. Thus the process $\kappa^1 = \left[  -\dfrac{\phi^{(2)}(A)}{\phi'(A)} A \right]$ is bounded on this interval: 
On the rest of the interval $[0,T]$ this coefficient is also bounded due to the regularity of $f$ (Conditions \ref{A1} and \ref{A2}).

%
%


\bigskip 

From \ref{C1_concave}, $\phi^{(3)} = (f^{(2)} \circ \phi) (\phi')^2 + (f' \circ \phi) \phi^{(2)} \leq 0$ and we deduce that $\kappa^2$ is also nonnegative. 

\begin{Lem} \label{lem:bound_coef_kappa_2}
Assume that {\rm \ref{C1_concave}}, {\rm \ref{C2_tech}} and {\rm \ref{C3_tech}} hold. Then the function $x\mapsto  -x\dfrac{\phi^{(3)}(x)}{\phi^{(2)}(x)}$ is a non-negative and bounded function on a neighborhood of zero.
\end{Lem}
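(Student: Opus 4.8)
The plan is to reduce everything to sign information about the derivatives of $\phi$ together with a single change of variables that converts the concavity hypothesis \ref{C3_tech} directly into the desired bound.

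First I would settle the non-negativity. Since $\phi' = f\circ\phi$, differentiating gives $\phi^{(2)} = (f'\circ\phi)\,\phi'$ and $\phi^{(3)} = (f''\circ\phi)(\phi')^2 + (f'\circ\phi)\,\phi^{(2)}$. Under \ref{A2} we have $f'\leq 0$ and $\phi'\leq 0$, so $\phi^{(2)}\geq 0$; under \ref{C1_concave} we have $f''\leq 0$, so both summands in $\phi^{(3)}$ are non-positive and $\phi^{(3)}\leq 0$. Hence $-x\,\phi^{(3)}(x)/\phi^{(2)}(x)\geq 0$ wherever it is defined. Moreover $\phi(x)\to+\infty$ as $x\to 0^+$ and, $f$ being concave with $f(+\infty)=-\infty$, one has $f'<0$ for all large arguments; therefore $\phi^{(2)}(x) = (f'\circ\phi)(x)\,\phi'(x) > 0$ on a punctured neighbourhood of zero, so the quotient defining $\kappa^2$ is meaningful there.

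The heart of the argument is a change-of-variables identity. Writing $u = x^{-1/\delta}$, we have $\psi^\delta(x) = G^{-1}(x^{-1/\delta}) = \phi(u)$, and since $\phi' = f\circ\phi$,
\[
g(x) := (-f)\circ\psi^\delta(x) = -f(\phi(u)) = -\phi'(u).
\]
As $x\to+\infty$ we have $u\to 0^+$, so the behaviour of $g$ near $+\infty$ is governed by that of $\phi$ near $0$. I would then differentiate twice in $x$, using $du/dx = -\tfrac{1}{\delta}u^{1+\delta}$: a direct computation gives $g'(x) = \tfrac{1}{\delta}u^{1+\delta}\phi^{(2)}(u)\geq 0$, recovering that $g$ is increasing, and
\[
g''(x) = -\frac{1}{\delta^2}\,u^{1+\delta}\Big[(1+\delta)\,u^{\delta}\phi^{(2)}(u) + u^{1+\delta}\phi^{(3)}(u)\Big].
\]

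Finally I would read off the bound. By \ref{C3_tech} the function $g$ is concave near $+\infty$, i.e.\ $g''\leq 0$; since the prefactor $-\tfrac{1}{\delta^2}u^{1+\delta}$ is negative, this is equivalent to $(1+\delta)\phi^{(2)}(u) + u\,\phi^{(3)}(u)\geq 0$ for all small $u>0$ (after dividing by $u^{\delta}>0$). Dividing by $\phi^{(2)}(u)>0$ and rearranging yields $-u\,\phi^{(3)}(u)/\phi^{(2)}(u)\leq 1+\delta$, which is precisely the asserted boundedness of $x\mapsto -x\,\phi^{(3)}(x)/\phi^{(2)}(x)$ near zero. The main obstacle is the bookkeeping in the change of variables and, analytically, the strict positivity of $\phi^{(2)}$ near zero that legitimises the final division; only the concavity half of \ref{C3_tech} produces the quantitative bound, the increasing half being automatic from $\phi^{(2)}\geq 0$. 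As a consistency check, in the power case $f(y)=-y|y|^{q}$ the hypothesis \ref{C3_tech} forces $\delta\geq 1+1/q$, so $1+\delta\geq (1+2q)/q$, matching the exact value $-x\,\phi^{(3)}/\phi^{(2)} = (1+2q)/q$ found in the examples.
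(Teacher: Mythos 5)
Your proof is correct and follows essentially the same route as the paper: both reduce the bound on $-x\,\phi^{(3)}(x)/\phi^{(2)}(x)$ near $0$ to the concavity of $(-f)\circ\psi^\delta$ near $+\infty$ guaranteed by \ref{C3_tech}, via the same change of variables (your substitution $u=x^{-1/\delta}$ is exactly the composition $\widehat{\psi^\delta}\circ u_K$ of the paper unwound). The only difference is in execution: where the paper invokes Pratt's comparative-risk-aversion theorem as in Lemma~\ref{lem:bounded_cond}, you differentiate $g=(-f)\circ\psi^\delta$ twice by hand, which is more self-contained, makes explicit the strict positivity of $\phi^{(2)}$ near $0$ that legitimises the final division, and produces the quantitative constant $1+\delta$, consistent with the exact value $(1+2q)/q$ in the power case.
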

\begin{proof}
Remark that  $x\phi^{(3)}(x) /\phi^{(2)}(x) \leq 0$. The conclusion of the lemma is equivalent to the boundedness from below of $-x \widehat \phi^{(2)} (x)/ \widehat \phi'(x)$ in the neighborhood of $\infty$ with $\widehat \phi(x) = -\phi'(1/x)$. From the proof of Lemma \ref{lem:bounded_cond}, we have 
$$\widehat \phi(x) = (-f)(\psi^\delta \circ u_K) = \widehat{\psi^\delta} \circ u_{K}$$
where $\psi^\delta$ and $\widehat{ \psi^\delta}$ are increasing and concave. Note that we can assume w.l.o.g. that the constant $K$ is the same. Indeed if \ref{C2_tech} holds for some $\delta > 0$, the same condition holds for any $\delta' \geq \delta$. Hence boundedness is equivalent to the existence of $K > 1$ such that $x \mapsto G^{-1} (x^{-1/(K-1)})$ (condition \ref{C2_tech}) and $(-f)\circ \psi^\delta$ are increasing and concave.
\end{proof}
\begin{Rem} 
Note that under {\rm \ref{C1_concave}}, the boundedness of $\kappa_2$ is equivalent to condition {\rm \ref{C3_tech}}.
\end{Rem}
Under \ref{C1_concave} to \ref{C3_tech}, the processes $\kappa^1$ and $\kappa^2$ are non-negative and bounded.

\subsubsection{Properties of $Z^A$}

In the linear part $L$ of the generator $F^H$ given by \eqref{eq:gene_L}, we also have to control the process $Z^A$. First note that the martingale $\left( \int_0^t Z^A_s dW_s, \ t \in [0,T]\right)$ is a BMO martingale (see \cite{kaza:94}) and $Z^A \in \bH^q((0,T))$, $q>1$, due to the assumption that $\eta$ is bounded from above and away from zero.

\begin{Lem}
For any $\rho \in (0,1)$ and $p>1$, we have
\begin{equation} \label{eq:estim_ZA}
\bE \left[ \left(  \int_0^T  \frac{(Z^A_s)^2}{(A_s)^{1+\rho}}  ds \right)^p  \right] < +\infty.
\end{equation}
\end{Lem}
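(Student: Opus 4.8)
The goal is to bound the $p$-th moment of $\int_0^T (Z^A_s)^2 / A_s^{1+\rho}\,ds$. The process $A_t = \bE[\int_t^T \frac{1}{\eta_s}\,ds \mid \cF_t]$ has dynamics $-dA_t = \frac{1}{\eta_t}\,dt + Z^A_t\,dW_t$, and by \eqref{eq:bound_A} it is comparable to $(T-t)$. The singularity in the integrand is the weight $A_s^{-(1+\rho)}$, which blows up at $s=T$ since $A_T=0$. So the integrability is a delicate statement about how $Z^A$ decays relative to the explosion of $1/A^{1+\rho}$ near the terminal time. The plan is to find a function $\Phi$ of $A$ whose Itô expansion produces the quantity $(Z^A)^2/A^{1+\rho}$ with a favorable sign, so that this singular integral is dominated by non-singular terms.

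**The main step: choosing a test function and applying Itô.**

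The natural candidate is $\Phi(x) = x^{1-\rho}$ for $\rho \in (0,1)$, applied to $A$. Since $1-\rho \in (0,1)$, $\Phi$ is concave with $\Phi''(x) = -(1-\rho)\rho\, x^{-1-\rho} < 0$, so the second-order term in Itô's formula is $\frac{1}{2}\Phi''(A_s)(Z^A_s)^2 = -\frac{1}{2}(1-\rho)\rho\, A_s^{-1-\rho}(Z^A_s)^2$, which carries precisely the singular weight we want, with a sign that lets us isolate it. Applying Itô to $\Phi(A_t)$ on $[0,t]$ and using $-dA_s = \frac{1}{\eta_s}\,ds + Z^A_s\,dW_s$ gives
$$\frac{(1-\rho)\rho}{2}\int_0^t \frac{(Z^A_s)^2}{A_s^{1+\rho}}\,ds = \Phi(A_0) - \Phi(A_t) - (1-\rho)\int_0^t \frac{1}{\eta_s}A_s^{-\rho}\,ds - (1-\rho)\int_0^t A_s^{-\rho} Z^A_s\,dW_s.$$
Because $\Phi \geq 0$ and $A_0 \leq T/\etamin$ by \eqref{eq:bound_A}, the boundary term $\Phi(A_0)-\Phi(A_t)$ is bounded. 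The drift term $\int_0^t \frac{1}{\eta_s} A_s^{-\rho}\,ds$ is controlled using $A_s \geq \frac{1}{\etamax}(T-s)$ and $\eta_s \geq \etamin$, so it is dominated by $C\int_0^T (T-s)^{-\rho}\,ds < +\infty$ since $\rho < 1$. This is the crucial point where $\rho \in (0,1)$ (rather than $\rho \geq 1$) is used.

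**Passing to moments and closing the estimate.**

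Raising to the $p$-th power, bounding each term by $C_p$ times its $p$-th power, taking expectations, and applying the Burkholder–Davis–Gundy inequality to the stochastic integral, I would obtain
$$\bE\left[\left(\int_0^t \frac{(Z^A_s)^2}{A_s^{1+\rho}}\,ds\right)^p\right] \leq C_p + C_p\,\bE\left[\left(\int_0^t \frac{(Z^A_s)^2}{A_s^{2\rho}}\,ds\right)^{p/2}\right].$$
The difficulty is that the BDG term carries the weight $A_s^{-2\rho}$ rather than $A_s^{-1-\rho}$; since $2\rho < 1+\rho$ for $\rho<1$, this is a strictly weaker singularity, but it is not immediately the same quantity, so one cannot directly close the loop by self-absorption as in Lemma \ref{lem:estim_ZY}. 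The plan is to interpolate: on the one hand use $A_s^{-2\rho} = A_s^{-1-\rho}\cdot A_s^{1-\rho}$ and bound $A_s^{1-\rho} \leq (T/\etamin)^{1-\rho}$ to relate the BDG term back to the target integral, then apply a Young-type inequality ($ab \leq \frac{1}{2}a^2 + \frac{1}{2}b^2$, or Jensen as in Lemma \ref{lem:estim_ZY}) to absorb the resulting $\varpi^{1/2}$-type term into the left side, yielding $\varpi \leq C + C\varpi^{1/2}$ and hence $\varpi \leq C$ uniformly in $t < T$. Monotone convergence as $t \uparrow T$ then gives the claim.

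The main obstacle is handling the stochastic integral term so that the weight matches, i.e. showing the BDG contribution is genuinely subordinate to the target quantity; this is exactly where the strict inequality $2\rho < 1+\rho$ (equivalently $\rho < 1$) together with the upper bound $A_s \leq \frac{1}{\etamin}(T-s) \leq T/\etamin$ does the work, converting the weaker singularity into a self-improving bound.
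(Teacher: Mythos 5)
Your proposal is correct and follows essentially the same route as the paper's own proof: Itô's formula applied to $A^{1-\rho}$ to produce the singular integral with a favorable sign, control of the drift term via \eqref{eq:bound_A} and the integrability of $(T-s)^{-\rho}$, then BDG plus the interpolation $(A_s)^{-2\rho}=(A_s)^{1-\rho}(A_s)^{-1-\rho}\leq C (A_s)^{-1-\rho}$ and a self-absorption inequality $\varpi\leq C+C\varpi^{1/2}$, closed by monotone convergence. The only cosmetic difference is the order in which H\"older/Jensen and the interpolation are applied, which does not affect the argument.
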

\begin{proof}
Let us apply It\^o's formula to $A^{1-\rho}$ on $[0,T-\eps]$: 
\begin{eqnarray*}
A_t^{1-\rho}  &= & (A_{T-\eps})^{1-\rho} + \int_t^{T-\eps} (1-\rho) \frac{(A_s)^{-\rho} }{\eta_s} ds -\frac{1}{2} \int_t^{T-\eps} (1-\rho)(-\rho) \frac{(Z^A_s)^2}{(A_s)^{1+\rho}}  ds \\
&+& \int_t^{T-\eps} (1-\rho) (A_s)^{-\rho}  Z^A_s dW_s.
\end{eqnarray*}
Hence,
\begin{eqnarray} \label{eq:estim_Z_A_1}
\frac{(1-\rho)\rho}{2} \int_0^{T-\eps} \frac{(Z^A_s)^2}{(A_s)^{1+\rho}} ds & = & A_0^{1-\rho}  -(A_{T-\eps})^{1-\rho} - (1-\rho) \int_0^{T-\eps} \frac{1}{\eta_s (A_s)^\rho} ds\\ \nonumber
& - & (1-\rho) \int_0^{T-\eps} (A_s)^{-\rho}  Z^A_s  dW_s.
\end{eqnarray}
Taking the expectation and using \eqref{eq:bound_A} and the fact that $t\mapsto (T-t)^{-\rho}$ is integrable at time $T$, we can apply Lebesgue monotone convergence theorem to get
\begin{equation*} 
\bE \int_0^{T} \frac{(Z^A_s)^2}{(A_s)^{1+\rho}} ds =  \frac{2}{\rho} \bE \left( \frac{(A_0)^{1-\rho}}{1-\rho} -  \int_0^{T} \frac{1 }{\eta_s (A_s)^\rho} ds  \right) < +\infty.
\end{equation*}
Using \eqref{eq:estim_Z_A_1} for any $p>1$ we obtain for some constant $C_p>0$,
\begin{eqnarray*}
\frac{1}{C_p}\left|\int_0^{T-\eps}\frac{(Z_s^A)^2}{(A_s)^{1+\rho}}\,ds\right|^p&\leq& \ |(A_{T-\varepsilon})^{1-\rho}|^p+|(A_{0})^{1-\rho}|^p+\left|\int_0^{T-\eps} \frac{1 }{\eta_s(A_s)^\rho}\,ds\right|^p\\
& +& \left|\int_0^{T-\eps} (A_s)^{-\rho} Z^A_s dW_s\right|^p \\
& \leq & |(A_{0})^{1-\rho}|^p +  |(T/\etamin )^{1-\rho}|^p+\frac{1}{\etamin^p} \left( \int_0^{T} \frac{1 }{(A_s)^\rho}\,ds\right)^p \\
& +& \left|\int_0^{T-\eps} (A_s)^{-\rho} Z^A_s dW_s\right|^p.
\end{eqnarray*}
From the BDG and H\"older inequalities, taking the expectation leads to
\begin{eqnarray*}
\frac{1}{C_p}\bE \left|\int_0^{T-\eps}\frac{(Z_s^A)^2}{(A_s)^{1+\rho}}\,ds\right|^p & \leq & \bE \left[ |(A_{0})^{1-\rho}|^p +  |(T/\etamin )^{1-\rho}|^p+\frac{1}{\etamin^p} \left( \int_0^{T} \frac{1 }{(A_s)^\rho}\,ds\right)^p \right]\\
& +& \left\{ \bE \left[ \left|\int_0^{T-\eps} ((A_s)^{-\rho} Z^A_s)^2 ds\right|^{p}\right] \right\}^{1/2}.
\end{eqnarray*}
The function $x\mapsto x^{1-\rho}$ is bounded on $[0,T/\etamin]$ by some constant $C$. Thus
\begin{eqnarray*}
\frac{1}{C_p}\bE \left|\int_0^{T-\eps}\frac{(Z_s^A)^2}{(A_s)^{1+\rho}}\,ds\right|^p & \leq & \bE \left[|(A_{0})^{1-\rho}|^p +  |(T/\etamin )^{1-\rho}|^p+\frac{1}{\etamin^p} \left( \int_0^{T} \frac{1 }{(A_s)^\rho}\,ds\right)^p \right]\\
& +& C^{p/2} \left\{ \bE \left[ \left|\int_0^{T-\eps} \frac{(Z^A_s)^2}{(A_s)^{1+\rho}} ds\right|^{p}\right] \right\}^{1/2}.
\end{eqnarray*}
In other words if $\gamma_\eps = \bE \left|\int_0^{T-\eps}\frac{(Z_s^A)^2}{(A_s)^{1+\rho}}\,ds\right|^p$, then there exists $C$ independent of $\eps$ such that 
$$0\leq \gamma_\eps \leq C  (1+(\gamma_\eps)^{1/2}),$$
which leads to the existence of some constant $C$ such that $\gamma_\eps \leq C$. Using the monotone convergence theorem, we obtain the desired estimate. 
\end{proof}

From this estimate on $Z^A$, using \eqref{eq:bound_A}, we have for any $p>1$
\begin{equation} \label{eq:estim_ZA_bis}
\bE \left[ \left(  \int_0^{T} \frac{(Z^A_s)^2}{A_s} ds \right)^p \right]  \leq \left(\dfrac{T}{\etamin}\right)^{\rho p} \bE \left[ \left(  \int_0^{T} \frac{(Z^A_s)^2}{(A_s)^{1+\rho}} ds \right)^p \right] <+\infty.
\end{equation}
Thereby we deduce the next corollary.
\begin{Coro} \label{lem:integrability_f_H_0}
Under Assumptions {\rm \ref{A1}} and  {\rm \ref{C2_tech}}, the process $F^H(\cdot,0,0)$ belongs to $\bL^p(\Omega \times [0,T])$ for any $p>1$.
\end{Coro}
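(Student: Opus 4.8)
The goal is the product-space estimate $\bE\int_0^T |F^H(s,0,0)|^p\,ds<\infty$ for every $p>1$. I would start from the explicit form
\[
F^H(s,0,0)=\frac{\kappa^1_s}{2}\,\frac{(Z^A_s)^2}{A_s}-\frac{\lambda_s}{\phi'(A_s)}
\]
and, using $|a-b|^p\le 2^{p-1}(|a|^p+|b|^p)$, bound each summand separately in $\bL^p(\Omega\times[0,T])$. The second term is already known to be bounded: as noted before Lemma \ref{lem:bounded_cond}, \ref{A1} and \eqref{eq:bound_A} give $|\lambda_s/\phi'(A_s)|\le\lambdamax\big(-\phi'(T/\etamin)\big)^{-1}$, so its $\bL^p$-norm is at most $\lambdamax\big(-\phi'(T/\etamin)\big)^{-1}T^{1/p}$. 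Under \ref{C2_tech}, Lemma \ref{lem:bounded_cond} moreover bounds $\kappa^1$ by a constant, so the whole problem reduces to proving $\bE\int_0^T\big((Z^A_s)^2/A_s\big)^p\,ds<+\infty$.

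The key point is that this is a genuine product norm, for which the integrated controls \eqref{eq:estim_ZA}--\eqref{eq:estim_ZA_bis} do not suffice: they live in $\bL^p(\Omega;\bL^1(0,T))$ and bound $\big(\int_0^T(Z^A_s)^2/A_s\,ds\big)^p$, not $\int_0^T\big((Z^A_s)^2/A_s\big)^p\,ds$. The plan is therefore to upgrade to a pointwise bound $(Z^A_s)^2/A_s\le C(T-s)$ a.s. Together with the lower bound $A_s\ge(T-s)/\etamax$ from \eqref{eq:bound_A}, this reduces the claim to the pointwise estimate $|Z^A_s|\le C(T-s)$; indeed $|Z^A_s|\le C(T-s)$ then forces $(Z^A_s)^2/A_s\le C^2\etamax(T-s)\le C^2\etamax T$, which is bounded, and the product norm is finite at once.

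To produce $|Z^A_s|\le C(T-s)$ I would use the diffusion structure of $\gamma=1/\eta$ supplied by \ref{H_diff}. In the Markovian regime \eqref{eq:gamma_SDE} one has $A_t=v(t,\gamma_t)$ with $v(t,x)=\bE\int_t^T\gamma^{t,x}_u\,du$, and It\^o's formula identifies $|Z^A_t|=|\partial_x v(t,\gamma_t)\,\sigma(\gamma_t)|$. Differentiating through the flow, $\partial_x v(t,x)=\bE\int_t^T\partial_x\gamma^{t,x}_u\,du$, where the first-variation process $\partial_x\gamma^{t,x}$ solves the linearized SDE started from $1$; since $b,\sigma$ are Lipschitz, $\bE|\partial_x\gamma^{t,x}_u|\le e^{C(u-t)}$, whence $|\partial_x v(t,x)|\le e^{CT}(T-t)$. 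As $\sigma$ is bounded this gives $|Z^A_t|\le C(T-t)$, and the reduction above then shows that $\kappa^1(Z^A)^2/A$, hence $F^H(\cdot,0,0)$, is in fact a bounded process, a fortiori in $\bL^p(\Omega\times[0,T])$ for all $p$.

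The main obstacle is exactly this passage from integrated to pointwise control of $Z^A$ near $T$, i.e.\ the linear decay $|Z^A_s|\lesssim(T-s)$: the gradient estimate $|\partial_x v(t,\cdot)|\le C(T-t)$ is what makes everything collapse to a bounded process, and it rests on the Markovian flow of \eqref{eq:gamma_SDE}. The integrated estimates alone leave room for $Z^A$ to carry interior singularities, which is why the product-norm statement genuinely calls on the regularity of $\gamma$ beyond \eqref{eq:bound_A}. In the general It\^o case \eqref{eq:gamma_ito_proc} one replaces the Markovian identity by the stochastic-Fubini decomposition $Z^A_s=-(T-s)\sigma^\gamma_s-\beta_s$, where $\beta$ is the martingale integrand of $\bE[\int_0^T(T-u)b^\gamma_u\,du\mid\cF_\cdot]$; the term $(T-s)\sigma^\gamma_s$ has the required decay, and the delicate step -- which governs the product-norm bound -- is the corresponding near-$T$ control of $\beta$.
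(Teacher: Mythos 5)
Your proposal departs from the paper's proof and has two genuine problems. First, a hypothesis mismatch: the corollary is stated under \ref{A1} and \ref{C2_tech} only, but your treatment of the term $\kappa^1 (Z^A)^2/A$ rests entirely on the pointwise decay $|Z^A_s|\leq C(T-s)$, which you derive from the diffusion structure of $\gamma=1/\eta$, i.e.\ from \ref{H_diff}. That assumption is not available here; the paper deliberately keeps it out of the existence part (it notes explicitly after Proposition \ref{prop:existence_sol_BSDE_sing_gen} that \ref{H_diff} is not used there, being reserved for uniqueness), so a proof of this corollary that invokes \ref{H_diff} proves a different statement. Second, even granting \ref{H_diff}, your argument is complete only in the Markovian case \eqref{eq:gamma_SDE}: in the general It\^o case \eqref{eq:gamma_ito_proc} the paper obtains for $Z^{\widetilde A}$ only a BMO-type, exponential-moment control near $T$ (Lemma \ref{lem:expo_moment_1}), not the linear pointwise decay you need, and you acknowledge yourself that this ``delicate step'' is left open. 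So, as written, the proposal does not prove the corollary.

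The source of the detour is your reading of $\bL^p(\Omega\times[0,T])$ as the product norm $\bE\int_0^T|F^H(s,0,0)|^p\,ds$. The paper's own proof is a two-line verification in the mixed sense: $\lambda/\phi'(A)$ is bounded by $\lambdamax\left(-\phi'(T/\etamin)\right)^{-1}$ (by \ref{A1} and \eqref{eq:bound_A}), $\kappa^1$ is bounded by Lemma \ref{lem:bounded_cond} (this is where \ref{C2_tech} enters), and then \eqref{eq:estim_ZA_bis} gives $\bE\left[\left(\int_0^T (Z^A_s)^2/A_s\,ds\right)^p\right]<+\infty$ for every $p>1$. This is exactly the integrability that the paper needs and uses afterwards: every later application is of the form $\bE\left[\left(\int_0^T|F^H(s,0,0)|\,ds\right)^p\right]$, e.g.\ in Proposition \ref{prop:existence_sol_BSDE_sing_gen} and in the appendix. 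Your observation that the strict product norm does not follow from \eqref{eq:estim_ZA_bis} is correct --- and indeed under \ref{A1} alone one should not expect $\bE\int_0^T (Z^A_s)^{2p}(A_s)^{-p}\,ds<+\infty$ for all $p$, since $Z^A$ is merely the integrand of a bounded martingale and can have interior spikes whose $2p$-th moments are not time-integrable --- but the right reaction is to read the statement in the weaker sense that the paper's proof establishes and its applications require, not to strengthen the hypotheses to \ref{H_diff}.
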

\begin{proof}
Since
$$F^H(t,0,0)= \frac{\kappa^1_t}{2} \frac{(Z^A_t)^2}{A_t} + \frac{\lambda_t}{\phi'(A_t)},$$
immediate consequence of Inequality \eqref{eq:estim_ZA_bis} and boundedness of the coefficients $\lambda$ and $\kappa^1$ due to \ref{A1} and \ref{C2_tech}.
\end{proof}

Recall that the generator $F^H$ depends on $z$ linearly with a coefficient equal to $\kappa^1 \dfrac{Z^A}{A}$.  We can control this term if we can apply Girsanov's theorem, that is if the stochastic exponential of the martingale $\left( t \mapsto  \displaystyle \int_0^t \dfrac{Z^A_s}{A_s} dW_s,\ t\in[0,T]\right)$ is uniformly integrable. Moreover in $L$, we also have a term equal to $\kappa^1_t \kappa^2_t \left( \dfrac{Z^A_t}{A_t} \right)^2 h$, which requires to control the quadratic variation of the previous martingale. In general it seems to be difficult to handle these terms. However under Assumption \ref{H_diff}, we can overcome this problem. 

Suppose that the process $1/\eta$ is given by \eqref{eq:gamma_ito_proc} and remark that:
\begin{eqnarray} \nonumber
A_t & = & \bE \left[ \int_t^T \frac{1}{\eta_s} ds \bigg| \cF_t \right] = \bE \left[ \int_t^T\gamma_s ds \bigg| \cF_t \right]  \\ \nonumber
& = & \bE \left[ \int_t^T \left( \gamma_t + \int_t^s b^\gamma_u du + \int_t^s \sigma^\gamma_u dW_u\right) ds \bigg| \cF_t \right] \\ \label{eq:decomp_A_Ito_case} 
&= &\gamma_t (T-t) + \bE \left[  \int_t^T (T-u) b^\gamma_u du  \bigg| \cF_t \right].
\end{eqnarray}
If we denote 
$$\widetilde A_t = \bE \left[  \int_t^T (T-u) b^\gamma_u du  \bigg| \cF_t \right],$$
then 
$$Z^A_t = \sigma^\gamma_t (T-t) + Z^{\widetilde A}_t,\quad \mbox{with} \quad -d \widetilde A_t = (T-t) b^\gamma_t dt + Z^{\widetilde A}_t dW_t .$$

\begin{Lem} \label{lem:expo_moment_1}
If Condition {\rm \ref{H_diff}} holds, there exist a deterministic time $\widehat T < T$ and a positive constant $\Sigma > \frac{1}{2} \vee \|\kappa^1\|_\infty^2$ such that 
\begin{equation} \label{eq:exponen_moment_Z^A}
\bE \left[ \exp \left( \Sigma \int_{\widehat T}^T  \left( \frac{Z^A_s}{A_s} \right)^2 ds \right) \right] < +\infty.
\end{equation}
\end{Lem}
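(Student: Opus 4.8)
The plan is to reduce the statement to the \emph{smallness} of a BMO norm. Writing $N_t:=\int_{\widehat T}^t \frac{Z^A_s}{A_s}\,dW_s$, I will show that $\|N\|_{\mathrm{BMO}_2}$ on $[\widehat T,T]$ tends to $0$ as $\widehat T\uparrow T$. Once $\Sigma\,\|N\|_{\mathrm{BMO}_2}^2<1$, the energy (John--Nirenberg) inequality for BMO martingales, $\bE[(\langle N\rangle_T)^n]\le n!\,\|N\|_{\mathrm{BMO}_2}^{2n}$ (see \cite{kaza:94}), yields by summation
\[
\bE\left[\exp\left(\Sigma\int_{\widehat T}^T \Bigl(\frac{Z^A_s}{A_s}\Bigr)^2 ds\right)\right]=\bE\bigl[\exp(\Sigma\langle N\rangle_T)\bigr]\le \frac{1}{1-\Sigma\,\|N\|_{\mathrm{BMO}_2}^2}<+\infty.
\]
Since $\kappa^1$ is bounded (Lemma \ref{lem:bounded_cond}), the threshold $\tfrac12\vee\|\kappa^1\|_\infty^2$ is finite; I fix any $\Sigma$ strictly above it and then choose $\widehat T$. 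First note that under \ref{H_diff}, in both cases $\gamma=1/\eta$ is an It\^o process $d\gamma=b^\gamma\,dt+\sigma^\gamma\,dW$ with \emph{bounded} coefficients: in the SDE case this uses \ref{A1}, since $\gamma_t\in[1/\etamax,1/\etamin]$ forces $b(\gamma_t),\sigma(\gamma_t)$ to stay in a compact set. I then use $Z^A_s=\sigma^\gamma_s(T-s)+Z^{\widetilde A}_s$ and $1/A_s\le \etamax/(T-s)$ from \eqref{eq:bound_A}, and split $Z^A/A$ into the two corresponding pieces. The first piece is bounded, $|\sigma^\gamma_s(T-s)/A_s|\le \|\sigma^\gamma\|_\infty\etamax$, so for every stopping time $\tau\ge\widehat T$ one has $\bE[\int_\tau^T(\sigma^\gamma_s(T-s)/A_s)^2 ds\mid\cF_\tau]\le \|\sigma^\gamma\|_\infty^2\etamax^2(T-\widehat T)$, giving a BMO contribution of order $\sqrt{T-\widehat T}$.

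The delicate piece is $Z^{\widetilde A}/A$, which is the main obstacle. The gain comes from $\widetilde A_t=\bE[\int_t^T(T-u)b^\gamma_u\,du\mid\cF_t]$ being of order $(T-t)^2$. Indeed $\int_\tau^T Z^{\widetilde A}_s\,dW_s=\widetilde A_\tau-\int_\tau^T(T-u)b^\gamma_u\,du$, so taking conditional variances gives, for every stopping time $\tau$,
\[
\bE\Bigl[\int_\tau^T (Z^{\widetilde A}_s)^2\,ds\,\Big|\,\cF_\tau\Bigr]=\mathrm{Var}\Bigl(\int_\tau^T(T-u)b^\gamma_u\,du\,\Big|\,\cF_\tau\Bigr)\le \frac{\|b^\gamma\|_\infty^2}{4}(T-\tau)^4.
\]
The crux is to upgrade this to the \emph{singularly weighted} bound needed for the BMO norm. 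Using $\frac{1}{(T-s)^2}=\frac{1}{(T-\tau)^2}+\int_\tau^s \frac{2}{(T-r)^3}\,dr$ and Fubini,
\[
\int_\tau^T \frac{(Z^{\widetilde A}_s)^2}{(T-s)^2}\,ds=\frac{1}{(T-\tau)^2}\int_\tau^T (Z^{\widetilde A}_s)^2\,ds+\int_\tau^T \frac{2}{(T-r)^3}\Bigl(\int_r^T (Z^{\widetilde A}_s)^2\,ds\Bigr)dr,
\]
and applying the conditional variance bound to each term (at $\tau$, and, after inserting $\cF_r$, at $r$) shows both contributions are of order $(T-\tau)^2$. Hence $\bE[\int_\tau^T (Z^{\widetilde A}_s/A_s)^2 ds\mid\cF_\tau]\le \tfrac{\etamax^2\|b^\gamma\|_\infty^2}{2}(T-\widehat T)^2$ for $\tau\ge\widehat T$, so this BMO contribution is of order $T-\widehat T$.

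Combining the two pieces, $\|N\|_{\mathrm{BMO}_2}\le C_1\sqrt{T-\widehat T}+C_2(T-\widehat T)\to 0$ as $\widehat T\uparrow T$, so $\widehat T$ can be chosen with $\Sigma\,\|N\|_{\mathrm{BMO}_2}^2<1$, which closes the argument. The genuinely technical point is the weighted estimate above; the exchange of conditional expectation and the $dr$-integral there is routine once one writes $\int_\tau^T$ as $\int_0^T \mathbf 1_{\{r\ge\tau\}}$ and uses $\cF_\tau\subseteq\cF_r$ on $\{\tau\le r\}$.
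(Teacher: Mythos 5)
Your proposal is correct, and its skeleton coincides with the paper's: the same decomposition $Z^A_s=\sigma^\gamma_s(T-s)+Z^{\widetilde A}_s$, the same reduction to showing that the BMO norm of $\int \frac{Z^{\widetilde A}_s}{T-s}\,dW_s$ on $[\widehat T,T]$ is $O(T-\widehat T)$, and the same final appeal to the John--Nirenberg/energy inequality from \cite{kaza:94} once $\Sigma$ times the squared BMO norm is below $1$. Two steps are implemented differently. For the key estimate on the $Z^{\widetilde A}$ piece, the paper applies It\^o's formula to $\widetilde A_t/(T-t)$ and reads off that the martingale $\widetilde M_u=\int_0^u\frac{Z^{\widetilde A}_t}{T-t}\,dW_t$ has increments bounded pathwise by $2\|b^\gamma\|(T-u)$, which immediately gives the small BMO norm; you instead compute the conditional variance $\bE[\int_\tau^T(Z^{\widetilde A}_s)^2ds\,|\,\cF_\tau]\leq \frac{\|b^\gamma\|_\infty^2}{4}(T-\tau)^4$ and upgrade it to the weighted bound via the splitting $\frac{1}{(T-s)^2}=\frac{1}{(T-\tau)^2}+\int_\tau^s\frac{2}{(T-r)^3}dr$ and the tower property. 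Both yield the same quantitative conclusion; the paper's route is slightly more direct (it gives an $L^\infty$ bound on the increments, stronger than BMO), while yours is a self-contained $L^2$ argument that does not require identifying the dynamics of $\widetilde A_t/(T-t)$. The more substantial divergence is the SDE case of \ref{H_diff}: the paper runs a Malliavin-calculus argument (Clark--Ocone plus the explicit solution of the linearized SDE) to get the pointwise bound $|Z^{\widetilde A}_t|\leq C(T-t)^2|\sigma(\gamma_t)|$, whereas you simply observe that \ref{A1} confines $\gamma_t$ to $[1/\etamax,1/\etamin]$, so the Lipschitz coefficients $b(\gamma_t),\sigma(\gamma_t)$ are bounded and the SDE case reduces to the bounded It\^o case. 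That reduction is valid and noticeably more elementary; what it gives up is only the sharper pointwise control of $Z^A/A$ that the Malliavin argument provides, which is not needed for the statement of the lemma.
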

\begin{proof}
Since $b^\gamma$ and $\sigma^\gamma$ are essentially bounded in \eqref{eq:gamma_ito_proc} and since $Z^A_t = \sigma^\gamma_t (T-t) + Z^{\widetilde A}_t$, using \eqref{eq:bound_A}, it is sufficient to prove the desired estimate only for $Z^{\widetilde A}$. 
Since $b^\gamma$ is essentially bounded, then 
$$ \left| \widetilde A_t \right| \leq \bE \left[  \int_t^T (T-u)  \left\| b^\gamma \right\|du  \bigg| \cF_t \right]=\frac{\|b^\gamma\|}{2}(T-t)^2.$$
It\^o's formula leads to
\begin{eqnarray*}
-d\left(  \frac{\widetilde A_t}{(T-t)} \right) & = & - \frac{\widetilde A_t}{(T-t)^{2}} dt +  b^\gamma_t dt + \frac{Z^{\widetilde A}_t}{(T-t)} dW_t.
\end{eqnarray*}
Hence we obtain that the martingale $\left( \displaystyle \widetilde M_u =  \int_0^u \dfrac{Z^{\widetilde A}_t}{(T-t)} dW_t, \ u \in [0,T]\right)$ is a BMO martingale:
$$\forall u \in [0,T], \ | \widetilde M_T-  \widetilde M_u| \leq 2 \| b^\gamma\| (T-u) \Rightarrow \sup_{u\in [t,T]} \bE \left[ | \widetilde M_T-  \widetilde M_u|  \bigg| \cF_u \right] \leq 2 \| b^\gamma\| (T-t).$$
Therefore we can choose $\widehat T$ very close to $T$ such that the BMO norm of $ \widetilde M$ on $[\widehat T,T]$ is as small as required. Using the Nirenberg inequality (see \cite[Theorem 2.2]{kaza:94}), there exists a constant $\widetilde \Sigma$ depending on the BMO norm of $\widetilde M$, such that 
$$\bE \left[  \exp\left( \widetilde \Sigma \int_{\widehat T}^T  \left( \frac{Z^{\widetilde A}_s}{A_s} \right)^2ds \right)  \right] < +\infty.$$
Precisely $\widetilde \Sigma$ should be smaller than the inverse of the BMO norm of $\widetilde M$. 
Thereby choosing $\widehat T$ sufficiently close to $T$, we get Condition \eqref{eq:exponen_moment_Z^A} for $Z^{\widetilde A}$.


Let us now assume that the process $\gamma=1/\eta$ solves the SDE \eqref{eq:gamma_SDE}, where $b$ and $\sigma$ are Lipschitz continuous functions defined on $\bR$. From \cite[Theorems 2.2.1 and 2.2.2]{nual:06}, the coordinate $\gamma_t$ belongs to $\bD^{1,\infty}$ for any $t\in [0,T]$. Moreover for any $p\geq 1$
\begin{equation} \label{eq:estim_deriv_xi_Lp}
\sup_{0\leq r\leq T} \bE \left( \sup_{r \leq t \leq T} |D_r\gamma_t |^p \right) < +\infty.
\end{equation}
The derivative $D_r\gamma_t$ satisfies the following linear equation:
\begin{eqnarray*}
D_r\gamma_t & = & \sigma(\gamma_r) +\int_r^t \widetilde \sigma(s) D_r\gamma_s dW_s +  \int_r^t \widetilde b(s) D_r \gamma_s ds
\end{eqnarray*}
for $r \leq t$ a.e. and $D_r \gamma_t = 0$ for $r>t$ a.e., where $\widetilde b(s)$ and $\widetilde \sigma(s)$ are two bounded processes, such that if $\mfd$ and $\sigma$ are of class $C^1$, they are given by:
$$
\widetilde b(s) = (\partial_{x} b) (\gamma_s),\qquad \widetilde \sigma(s) = (\partial_{x}  \sigma)  (\gamma_s).
$$
In this case $\xi = \int_0^T (T-u) b^\gamma_u  du$ is in $\bD^{1,2}$ and by the Clark-Ocone formula, 
we have
$$Z^{\widetilde A}_t = \int_t^T (T-u) \bE \left[ D_t b^\gamma_u  \bigg| \cF_t \right] du = \int_t^T (T-u) \bE \left[ \widetilde b(u) D_t \gamma_u  \bigg| \cF_t \right] du.$$
Since $\zeta_u = D_t \gamma_u$ satisfies the linear one-dimensional SDE:
$$\zeta_u = \sigma(\gamma_t) + \int_t^u \widetilde \sigma(s) \zeta_s dB_s + \int_t^u \widetilde b(s) \zeta_s ds,$$
an explicit formula for $|\zeta_u|$ reads
\begin{eqnarray*}
|\zeta_u| & = & |\sigma(\gamma_t)| \exp \left[ \int_t^u \widetilde \sigma(s)dB_s  - \frac{1}{2} \int_t^u \widetilde \sigma(s)^2 ds \right] \exp \left[ \int_t^u \widetilde b(s) ds \right].
\end{eqnarray*}
Since the function $b$ is supposed to be Lipschitz continuous, $\widetilde b$ is essentially bounded. Thereby 
\begin{eqnarray*}
\left| Z^{\widetilde A}_t \right| & \leq & \|\widetilde b\|  \int_t^T(T-u) \bE \left[  \left|  D_t \gamma_u \right|  \bigg| \cF_t \right] du \\
& = &   \|\widetilde b\|  |\sigma(\gamma_t)| \int_t^T(T-u) \bE \left[  \exp \left[ \int_t^u \widetilde b(s) ds \right]  \bigg| \cF_t \right] du\\
&\leq &\frac{1}{2}  \|\widetilde b\| e^{T \|\widetilde b\| }  |\sigma(\gamma_t)| (T-t)^2 = C (T-t)^2 |\sigma^\gamma_t|.
\end{eqnarray*}
Finally it implies that 
$$\frac{Z^A_t}{T-t} = \sigma^\gamma_t \left(1 + (T-t) \varsigma^\gamma_t \right),$$
where $\varsigma^\gamma$ is a bounded process. Hence Condition \eqref{eq:exponen_moment_Z^A} holds. 
\end{proof} 

\subsection{Proof of Theorem \ref{thm:uniq_sing_BSDE}} \label{ssect:proof_main_thm}

We suppose that the assumptions of Theorem \ref{thm:uniq_sing_BSDE} hold. First we prove the next result:
\begin{Lem} \label{lem:control_vartheta}
For any $\eps > 0$, there exists a deterministic time $T^\eps \in [0,T)$ such that a.s. for any $t\in [T^\eps,T]$
\begin{equation} \label{eq:a_priori_estim_Y_bis}
\vartheta(T-t)\leq \phi \left( \frac{T-t}{(1+\eps)\etamax}\right) = \phius_\eps(t).
\end{equation}

\end{Lem}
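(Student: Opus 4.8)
The plan is to recast the claimed inequality as a purely deterministic comparison between the two tail integrals $G$ and $\widetilde G$ near $+\infty$, which sidesteps the difficulty of comparing the ODE solutions $\phi$ and $\vartheta$ directly at their common blow-up at $0$. Recall that $\phi=G^{-1}$ and $\vartheta=\widetilde G^{-1}$ are decreasing on $(0,+\infty)$ and tend to $+\infty$ as $x\downarrow 0$. First I would fix $x=T-t>0$, set $v=\vartheta(x)$ so that $x=\widetilde G(v)$, and apply the (decreasing) map $G$ to the target inequality
\[
v\le \phi\!\left(\frac{x}{(1+\eps)\etamax}\right)=G^{-1}\!\left(\frac{\widetilde G(v)}{(1+\eps)\etamax}\right).
\]
Since $G$ reverses inequalities, this is equivalent to $\widetilde G(v)\le (1+\eps)\,\etamax\, G(v)$ evaluated at $v=\vartheta(T-t)$. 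The whole statement is thus reduced to a tail estimate in the variable $v$.

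Next I would prove that $\widetilde G(v)/(\etamax\, G(v))\to 1$ as $v\to+\infty$. Writing
\[
\widetilde G(v)=\etamax\int_v^\infty\frac{dt}{-f(t)-\lambdamax\etamax},\qquad G(v)=\int_v^\infty\frac{dt}{-f(t)},
\]
the ratio of integrands equals $\dfrac{-f(t)}{-f(t)-\lambdamax\etamax}$, which tends to $1$ as $t\to+\infty$ because $-f(t)\to+\infty$ (this is exactly $f(+\infty)=\phi'(0^+)=-\infty$, forced by \ref{A3}). A routine tail-integral estimate then upgrades this to the ratio of the integrals: for any $\delta>0$ there is $V$ such that $(1-\delta)\tfrac{1}{-f(t)}\le\tfrac{1}{-f(t)-\lambdamax\etamax}\le(1+\delta)\tfrac{1}{-f(t)}$ for $t\ge V$, and integrating from $v\ge V$ to $+\infty$ sandwiches $\widetilde G(v)/(\etamax\, G(v))$ in $[1-\delta,1+\delta]$. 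Taking $\delta=\eps$ yields a threshold $V_\eps$ beyond which $\widetilde G(v)\le(1+\eps)\,\etamax\, G(v)$.

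Finally I would translate this threshold into a time. Choosing $V_\eps$ large enough that both the inequality holds for $v\ge V_\eps$ and $\widetilde G(V_\eps)<T$ (possible since $\widetilde G(v)\downarrow 0$), I set $T^\eps:=T-\widetilde G(V_\eps)\in(0,T)$. For $t\in[T^\eps,T]$ one has $T-t\le\widetilde G(V_\eps)$, hence $v=\vartheta(T-t)\ge V_\eps$ by monotonicity of $\vartheta$, and the equivalence of the first paragraph delivers \eqref{eq:a_priori_estim_Y_bis}. Note that everything here is deterministic, so the ``a.s.'' is vacuous, and no use is made of \ref{C1_concave}--\ref{H_diff}. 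The only genuinely delicate point is the tail-ratio limit of the second paragraph, i.e.\ that the additive shift $\lambdamax\etamax$ in the denominator becomes asymptotically negligible relative to $-f$; the rest is bookkeeping of the orientation-reversing monotone changes of variable.
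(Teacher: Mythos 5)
Your proof is correct and follows essentially the same route as the paper: both reduce \eqref{eq:a_priori_estim_Y_bis} to the deterministic tail comparison $\etamax G(v)\leq \widetilde G(v)\leq (1+\eps)\etamax G(v)$ for $v$ large (exploiting that the shift $\lambdamax\etamax$ is negligible against $-f(v)\to+\infty$, which is indeed forced by \ref{A2}--\ref{A3}), and then invert the decreasing bijections to produce the deterministic time $T^\eps$. Your integrand-by-integrand sandwich is a slightly cleaner way to get the upper bound than the paper's explicit decomposition of $\widetilde G-\etamax G$, but the idea is identical.
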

\begin{proof}
Recall that $\vartheta = \widetilde G^{-1}$ solves \eqref{eq:upper_ODE} with 
$$\widetilde G(x) = \int_x^\infty \frac{-1}{ \lambdamax + \frac{f(y)}{\etamax}}  dy = \etamax  \int_x^\infty \frac{1}{- \lambdamax \etamax -f(y)} dy,$$
and with $\vartheta(0)=+\infty$. But 
\begin{eqnarray*}
\widetilde G(x) & = & \etamax  \int_x^\infty \frac{1}{-\mathfrak C -f(y)} dy = \etamax G(x) +  \lambdamax (\etamax )^2\int_x^\infty \frac{1}{(-\lambdamax \etamax -f(y))(-f(y))} dy.
\end{eqnarray*}
Therefore $\etamax G(x)  \leq \widetilde G(x)$ and 
\begin{eqnarray*}
\widetilde G(x)  &\leq &  \etamax G(x) +   \lambdamax (\etamax )^2 \frac{G(x)}{(-  \lambdamax \etamax  -f(x))}  = \etamax G(x) \left( \frac{-f(x)}{-\lambdamax \etamax -f(x)}\right) .
\end{eqnarray*}
We deduce that on the interval $\widetilde I = (R,+\infty)$, with $R=f^{-1}(-(1+\eps)\lambdamax \etamax)$, $\etamax G(x)  \leq\widetilde G(x) \leq (1+\eps) \etamax G(x)$, and thereby on the neighborhood of zero $(0,G^{-1}(R / \etamax))$,
$$G^{-1} (x/\etamax) \leq  \vartheta(x) \leq G^{-1} (x/((1+\eps)\etamax))= \phi(x/((1+\eps)\etamax)).$$
Thus provided that $T^\eps=T-G^{-1}(R / \etamax)\leq t \leq T$, $ \vartheta(T-t) \leq\phius_\eps(t)$.
\end{proof}
%
%

\begin{Lem} \label{lem:integrable_0_T}
The non-negative process $\left(\dfrac{-f'(\phi(A_s))}{\eta_s} H_s =  \dfrac{\kappa^1_s}{\eta_s A_s} H_s, \ s \in [0,T)\right)$ is integrable on $\Omega \times [0,T]$. 
\end{Lem}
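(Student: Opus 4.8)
The plan is to show that $\bE\int_0^T Q_s\,ds<+\infty$, where I write $Q_s:=\frac{-f'(\phi(A_s))}{\eta_s}H_s=\frac{\kappa^1_s}{\eta_s A_s}H_s\geq 0$ and recall that $H_s=\frac{Y_s-\phi(A_s)}{-\phi'(A_s)}\geq 0$ by \eqref{eq:lower_Y_estimate}. I would split the integral at a deterministic time $T'<T$, to be chosen close enough to $T$ below. On $[0,T']$ the estimate is routine: by \eqref{eq:bound_A} the process $A$ stays in a compact subset of $(0,\infty)$, so $\phi$, $\phi'$ and $f'\circ\phi$ are bounded there and $-\phi'(A_s)=-f(\phi(A_s))$ is bounded away from $0$; since $Y\in\bS^\infty(0,T-)$ and $\phi\geq 0$, the process $H$, and hence $Q$, is bounded in every $L^p$ on $[0,T']$, so that $\bE\int_0^{T'}Q_s\,ds<+\infty$.

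The whole difficulty is therefore concentrated near $T$. There I would first use the boundedness of $\kappa^1$ near $T$ (which holds under \ref{C2_tech}, via Lemma \ref{lem:bounded_cond}) together with the lower bound $A_s\geq (T-s)/\etamax$ from \eqref{eq:bound_A} to obtain, for $s$ close to $T$,
$$Q_s=\frac{\kappa^1_s}{\eta_s A_s}H_s\leq \frac{C}{T-s}H_s.$$
Thus it suffices to prove the \emph{deterministic} pointwise bound $H_s\leq C(T-s)$ near $T$, since then $Q_s\leq C$ is bounded and trivially integrable on $[T',T]$.

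To establish $H_s\leq C(T-s)$ I would combine the two-sided control of $Y$. By the a priori estimate \eqref{eq:a_priori_estim_Y} and Lemma \ref{lem:control_vartheta}, on a neighbourhood $[T^\eps,T]$ of $T$ one has $Y_s\leq\vartheta(T-s)\leq\phi\big(\tfrac{T-s}{(1+\eps)\etamax}\big)$, while \eqref{eq:lower_Y_estimate} and \eqref{eq:bound_A} give $\phi(A_s)\geq\phi\big(\tfrac{T-s}{\etamin}\big)$. Writing $u=T-s$, $a=\frac{1}{(1+\eps)\etamax}$ and $b=\frac{1}{\etamin}$ (so $a<b$), the mean value theorem and the monotonicity of $-\phi'$ yield
$$Y_s-\phi(A_s)\leq \phi(au)-\phi(bu)=\int_{au}^{bu}(-\phi'(x))\,dx\leq (-\phi'(au))\,(b-a)\,u.$$
Dividing by $-\phi'(A_s)$ and using $au<\frac{T-s}{\etamax}\leq A_s\leq bu$ together with monotonicity of $-\phi'$ gives
$$H_s=\frac{Y_s-\phi(A_s)}{-\phi'(A_s)}\leq (b-a)\,u\,\frac{-\phi'(au)}{-\phi'(A_s)}.$$

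The remaining point, and the main obstacle, is to bound the ratio $\frac{-\phi'(au)}{-\phi'(A_s)}$ uniformly in $s$. This is exactly where \ref{C2_tech} enters: by Lemma \ref{lem:bounded_cond} one has $\big|x\,\phi''(x)/\phi'(x)\big|\leq K$ for $x$ small, hence $\big|\tfrac{d}{dx}\log(-\phi'(x))\big|\leq K/x$; integrating between the comparable arguments $au$ and $A_s$ (both lying in $[au,bu]$) bounds the ratio by $(b/a)^K$, a constant independent of $s$. This yields $H_s\leq C(T-s)$, after taking $T'$ close enough to $T$ that all the neighbourhood conditions above hold simultaneously (smallness of the arguments of $\phi'$ required by Lemma \ref{lem:bounded_cond}, and $T'\geq T^\eps$). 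Combined with the $L^p$ bound on $[0,T']$ this finishes the proof. The genuinely delicate step is this control of the ratio of $-\phi'$ at comparable arguments, which is precisely the role played by the convexity assumption \ref{C2_tech}.
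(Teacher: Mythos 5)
Your proof is correct, but it takes a genuinely different --- and in fact sharper --- route than the paper's. The paper bounds $H_s$ crudely by $\vartheta(T-s)/(-\phi'(A_s))$, i.e.\ it discards the subtracted term $\phi(A_s)$ in the numerator of $H_s$, so that the resulting deterministic envelope for $\kappa^1_s H_s/(\eta_s A_s)$ is $C\,\bigl(\phi''\phi/(\phi')^2\bigr)\bigl((T-s)/((1+\eps)\etamax)\bigr)$, which is in general unbounded near $T$ (it grows like $|\log(T-s)|$ in the exponential case $f(y)=-(e^{ay}-1)$, and like $\phi^2$ in the normal case). The paper then proves integrability of this envelope through the identity $(\phi/\phi')'=1-\phi''\phi/(\phi')^2$ and the Hardy-type limit $y/(-f(y))\to 0$, which follows from {\rm \ref{A3}} and the monotonicity of $-f$. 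You instead keep the cancellation, $Y_s-\phi(A_s)\le\phi(au)-\phi(bu)$ with $u=T-s$, and the mean value theorem combined with the ratio control \eqref{eq:increment_deriv_phi} (your integration of $|\phi''/\phi'|\le K/x$ is exactly how the paper derives that inequality from Lemma \ref{lem:bounded_cond}) gives the pointwise bound $H_s\le C(T-s)$, so the process in question is actually \emph{bounded} on a deterministic neighbourhood of $T$, not merely dominated by an integrable deterministic function. Your argument thus avoids the Hardy limit altogether and yields a sharper a priori estimate on $H$ than \eqref{eq:a_priori_estim_H} (by a logarithmic factor in the exponential and normal cases), at no extra cost in hypotheses: the ingredients you invoke (\eqref{eq:a_priori_estim_Y}, Lemma \ref{lem:control_vartheta}, \eqref{eq:bound_A}, \eqref{eq:increment_deriv_phi}) are all available at this point, the treatment of $[0,T']$ is routine as you say, and both routes ultimately rest on {\rm \ref{C2_tech}} through Lemma \ref{lem:bounded_cond}.
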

\begin{proof}
From the a priori estimate \eqref{eq:a_priori_estim_Y} on $Y$, we obtain that $0 \leq H_t \leq \dfrac{\vartheta(T-t)}{-\phi'(A_t)}.$
Indeed using the very definition of $\kappa^1_s$ we have
\begin{eqnarray*}
0\leq \dfrac{\kappa^1_s}{\eta_s A_s} H_s & = &  \frac{1}{\eta_s}\frac{\phi^{(2)}(A_s)}{(-\phi'(A_s))}  H_s \leq  \frac{1}{\etamin}\frac{\phi^{(2)}(A_s)\vartheta \left(T-s \right)}{(\phi'(A_s))^2} .
\end{eqnarray*}
Since $\phi^{(2)}$ is a non-increasing function, from the inequality \eqref{eq:bound_A}, for any $\eps > 0$
$$\phi^{(2)}(A_s) \leq \phi^{(2)}\left( \frac{T-s}{\etamax} \right) \leq \phi^{(2)}\left( \frac{T-s}{(1+\eps)\etamax} \right).$$
Moreover from Lemma \ref{lem:bounded_cond} we know that for any $y \in (0,1/R)$ and $0< a\leq 1$, 
\begin{equation} \label{eq:increment_deriv_phi}
1\leq \frac{-\phi'(ay)}{-\phi'(y)} \leq \frac{1}{a^K}.
\end{equation}
Since $\phi'$ is non-decreasing, with \eqref{eq:bound_A} and $a = \etamin/((1+\eps)\etamax)$,
$$-\phi'(A_s) \geq -\phi'\left( \dfrac{T-s}{\etamin} \right) \geq -a^K  \phi'\left( \dfrac{T-s}{(1+\eps)\etamax} \right).$$
Therefore we obtain that on some deterministic neighborhood of $T$:
$$\frac{\phi^{(2)}(A_s)\vartheta \left(T-s \right)}{(\phi'(A_s))^2} \leq a^{-K}  \left( \frac{\phi^{(2)} \phi}{(\phi')^2} \right) \left( \frac{T-s}{(1+\eps)\etamax} \right).$$
Hence we have a deterministic bound on $ \dfrac{\kappa^1}{\eta A} H$. 
Now 
$$\int_t^u  \frac{1}{\etamax} \left( \frac{\phi''\phi}{(\phi')^2} \right) \left( \frac{T-s}{\etamax} \right) ds = \int_{(T-u)/(\etamax)}^{(T-t)/(\etamax)} \left( \frac{\phi''\phi}{(\phi')^2} \right) \left(x \right) dx$$
and our result follows if $(\phi''\phi)/(\phi')^2$ is integrable at zero. Remark that 
$$ \left( \frac{\phi}{\phi'} \right)^\prime = 1- \frac{\phi''\phi}{(\phi')^2} .$$
Hence the integrability is equivalent to the existence of the limit at zero of $\phi/\phi'$, that is the limit at infinity of $y\mapsto y /(-f(y))$. From \cite[Section 178]{hardy:08}, since $-f$ is non-decreasing, this limit is equal to zero. This achieves the proof of the lemma.
\end{proof}

We show that the BSDE \eqref{eq:BSDE_H} has a solution on $[0,T]$.
\begin{Prop} \label{prop:existence_sol_BSDE_sing_gen}
Under Conditions {\rm \ref{A1}} to {\rm \ref{A3}} and {\rm \ref{C1_concave}} to {\rm \ref{C4_tech}}, there exists a non-negative process $(H,Z^H)$ solution (in the sense of Definition \ref{def:sing_gene_sol}) of the BSDE \eqref{eq:BSDE_H}. 
\end{Prop}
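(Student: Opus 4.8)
The plan is to construct $(H,Z^H)$ directly from the minimal solution $(Y,Z^Y)$ of the singular BSDE \eqref{eq:sing_BSDE} furnished by Proposition \ref{prop:exist_min_sol}, by inverting the ansatz \eqref{eq:asymp_beha_Y}. Concretely, I would set
\[
H_t := \frac{Y_t - \phi(A_t)}{-\phi'(A_t)}, \qquad t \in [0,T),
\]
which is well defined since $\phi'<0$ on $(0,\infty)$ and $A_t>0$ for $t<T$. Non-negativity $H_t\geq 0$ is immediate from the lower bound \eqref{eq:lower_Y_estimate} and $\phi'\leq 0$. On each interval $[0,T-\eps]$ the process $A$ is bounded away from $0$, so $h(a,y)=(y-\phi(a))/(-\phi'(a))$ is a smooth function of the two It\^o processes $A$ (dynamics \eqref{eq:dynamics_A}) and $Y$; applying It\^o's formula and $\phi'=f\circ\phi$ reproduces, in reverse, the heuristic computation \eqref{eq:dyn_H_sym_case}. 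Thus $H$ solves \eqref{eq:dyn_BSDE_H} on $[0,T)$ with generator $F^H=F+L$ given by \eqref{eq:gene_F}--\eqref{eq:gene_L}, the martingale integrand being identified explicitly as $Z^H_t=-\big(1+\kappa^1_t H_t/A_t\big)Z^A_t + Z^Y_t/(-\phi'(A_t))$.

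It then remains to verify the terminal condition and the integrability of Definition \ref{def:sing_gene_sol}. For both I would first extract a sharp pointwise bound on $H$ near $T$. Combining the a priori sandwich $\phi(A_t)\leq Y_t\leq\vartheta(T-t)$ (from \eqref{eq:lower_Y_estimate} and \eqref{eq:a_priori_estim_Y}), the comparison $\vartheta(T-t)\leq\phius_\eps(t)=\phi\big((T-t)/((1+\eps)\etamax)\big)$ of Lemma \ref{lem:control_vartheta}, the bounds \eqref{eq:bound_A} on $A$, and the increment estimate \eqref{eq:increment_deriv_phi}, one controls $(Y_t-\phi(A_t))/(-\phi'(A_t))$. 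Translating Condition \ref{C4_tech} through $\phi=G^{-1}$ gives $\phi(x)/(-\phi'(x))\leq C\,x^{1-\rho}$ near $0$, so altogether $0\leq H_t\leq C\,A_t^{1-\rho}$ on a deterministic neighborhood of $T$. Since $A_t\to 0$ as $t\to T$, this both supplies the zero terminal condition and is the decisive a priori bound for the generator estimates.

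For the integrability I would split $F^H=F+L$. Using $\phi(A_t)-\phi'(A_t)H_t=Y_t$, the nonlinear part rewrites as $F(t,H_t)=\big(f(\phi(A_t))-f(Y_t)\big)/(\eta_t\phi'(A_t)) - f'(\phi(A_t))H_t/\eta_t$; concavity \ref{C1_concave} forces $F(t,H_t)\leq 0$ and yields the upper bound $|F(t,H_t)|\leq\eta_t^{-1}\big(f(Y_t)/f(\phi(A_t))-1\big)$, whose right-hand side is bounded on $[0,T]$ (near $T$ via \eqref{eq:increment_deriv_phi} with $a=\etamin/((1+\eps)\etamax)$, away from $T$ directly), so $F(\cdot,H)$ is in fact bounded, while its singular linear piece $\kappa^1 H/(\eta A)$ is integrable by Lemma \ref{lem:integrable_0_T}. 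The part $L$ is treated term by term: $\lambda/(-\phi'(A))$ is bounded; $\tfrac12\kappa^1 (Z^A)^2/A$ and the quadratic term $\tfrac12\kappa^1\kappa^2 (Z^A/A)^2 H$ are integrable by the weighted estimates \eqref{eq:estim_ZA}--\eqref{eq:estim_ZA_bis}, the boundedness of $\kappa^1,\kappa^2$ (Lemmata \ref{lem:bounded_cond} and \ref{lem:bound_coef_kappa_2}) and $H\leq C A^{1-\rho}$; the transport term $\kappa^1 (Z^A/A)Z^H$ is handled by Cauchy--Schwarz after inserting the explicit form of $Z^H$, combining \eqref{eq:estim_ZA} for the $Z^A$-factors with the weighted $Z^Y$-bound established at the end of Section \ref{sect:sol_sing_BSDE}. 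The same ingredients bound $\bE[(\int_0^T (Z^H_s)^2\,ds)^{1/2}]$ via the formula for $Z^H$. Finally, letting $\eps\to 0$ (monotone/dominated convergence) yields \eqref{eq:BSDE_H} on all of $[0,T]$ in the sense of Definition \ref{def:sing_gene_sol}.

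The main obstacle is precisely this last integrability analysis near $T$: the linear coefficient $\kappa^1/(\eta A)$ and the factors $(Z^A/A)^2$ blow up at the terminal time, and the transport term $\kappa^1 (Z^A/A)Z^H$ couples that blow-up with $Z^H$, which itself inherits the singular $Z^A$- and $Z^Y$-contributions. Reconciling the various weights --- so that the integrable weighted estimates for $Z^A$ (with exponent $1+\rho$) and for $Z^Y$ match the weights demanded by $F^H$ --- is exactly what the behaviour-at-infinity conditions \ref{C2_tech}--\ref{C4_tech} are designed to make possible. Since \ref{H_diff} is not assumed here, one cannot simply remove the transport term by a Girsanov change of measure, so these estimates must be carried out by hand.
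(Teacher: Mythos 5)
Your proposal follows essentially the same route as the paper: define $H$ from the minimal solution $Y$ via the ansatz \eqref{eq:asymp_beha_Y}, get non-negativity from \eqref{eq:lower_Y_estimate}, derive the dynamics on $[0,T-\eps]$ by It\^o, control $H_t/A_t^{1-\rho}$ through \ref{C4_tech}, and check the integrability of each piece of $F^H$ using Lemma \ref{lem:integrable_0_T}, Corollary \ref{lem:integrability_f_H_0}, the weighted estimates \eqref{eq:estim_ZA}--\eqref{eq:estim_ZA_bis} for $Z^A$, Lemma \ref{lem:estim_ZY} for $Z^Y$, and the explicit link between $Z^H$, $Z^A$ and $Z^Y$ (your sign convention for $Z^H$ differs from \eqref{eq:link_ZH_ZY} but this is immaterial). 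The only noticeable variation is that you bound the concave remainder $F(\cdot,H)$ pointwise via $\eta^{-1}(f(Y)/f(\phi(A))-1)$ and \eqref{eq:increment_deriv_phi}, whereas the paper deduces its integrability indirectly from the identity $\bE(H_0)=\bE(H_t)+\bE\int_0^t F^H$ together with $\lim_{t\to T}\bE(H_t)=0$; both work.
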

\begin{proof}
Here we do not construct $(H,Z^H)$ from scratch, but we use the existence of a minimal solution $(Y,Z^Y)$ of \eqref{eq:sing_BSDE}. If we define $H$ by \eqref{eq:asymp_beha_Y}, our previous computations show that:
 \begin{itemize}
\item The process $(H,Z^H)$ satisfies the dynamics given by \eqref{eq:dyn_BSDE_H} on any interval $[0,T-\eps]$. 
\item $H$ verifies an a priori estimate:
\begin{equation} \label{eq:a_priori_estim_H}
0\leq H_t \leq \dfrac{\vartheta(T-t)}{-\phi'(A_t)}   \leq \dfrac{\vartheta(T-t)}{-\phi'((T-t)/\etamin)}. 
\end{equation}
Hence $H$ is in $\bD^\infty(0,T-)$. 
\item $Z^H$ belongs to $\bH^\infty(0,T-)$.
\end{itemize}
Thus we only have to extend the assertions on $[0,T]$. Corollary \ref{lem:integrability_f_H_0} and Lemma \ref{lem:integrable_0_T} imply that the processes $F^H(\cdot,0,0)$ and $\dfrac{- f'(\phi(A))}{\eta}  H$ are integrable on $\Omega \times [0,T]$. 
Let us control the term:
$$ \left[ \frac{\kappa^1_t \kappa^2_t}{2}  \left( \frac{Z^A_t}{A_t} \right)^2 H_t + \kappa^1_t \frac{Z^A_t}{A_t} Z^H_t \right].$$
We already know that $\kappa^1$ and $\kappa^2$ are bounded and that $Z^A$ satisfies the inequality \eqref{eq:estim_ZA}. Let us precise the relation between $Z^H$ and $Z^Y$. Since $Y=\phi(A) -\phi'(A) H$, we have:
$$Z^Y_t = \phi'(A_t) Z^A_t - \phi^{(2)}(A_t) H_t Z^A_t  - \phi'(A_t) Z^H_t = \phi'(A_t) \left[ Z^A_t -f'(\phi(A_t)) H_t Z^A_t  -  Z^H_t\right] $$
thus
$$Z^H_t = \left[ 1 - f'(\phi(A_t)) H_t \right] Z^A_t + \frac{1}{\phi'(A_t)} Z^Y_t$$
and
\begin{eqnarray} \nonumber
\frac{Z^H_t}{(A_t)^{(1-\rho)/2}} &= &\left[ 1 -f'(\phi(A_t))H_t\right]  \frac{Z^A_t}{(A_t)^{(1-\rho)/2}} +  \frac{1}{\phi'(A_t)(A_t)^{(1-\rho)/2}} Z^Y_t \\ \label{eq:link_ZH_ZY}
& = & \left[ (A_t)^{\rho} + \kappa^1_t \dfrac{H_t}{(A_t)^{1-\rho}}\right]  \frac{Z^A_t}{(A_t)^{(1+\rho)/2}} +  \frac{1}{\phi'(A_t)(A_t)^{(1-\rho)/2}} Z^Y_t .
\end{eqnarray}

Equation \eqref{eq:lower_Y_estimate} and the concavity of $f$ leads to
$$-f'(Y_s) \geq -f'(\phi(A_s)) = \frac{\kappa^1_s }{A_s}.$$
Using Lemma \ref{lem:control_vartheta}, Estimates \eqref{eq:a_priori_estim_Y} \eqref{eq:bound_A},  and  \eqref{eq:increment_deriv_phi}  leads to
$$\left( \dfrac{\etamin}{(1+\eps)\etamax} \right)^{2K} \dfrac{1}{\left(\phi' \left(A_t \right)\right)^2} \leq \dfrac{1}{\left(\phi' \left( \frac{T-t}{(1+\eps)\etamax}\right)\right)^2}  \leq  \dfrac{1}{(f(\vartheta(T-t)))^2} \leq \dfrac{1}{(f(Y_t))^2}.$$
Using Lemma \ref{lem:estim_ZY}, for any $p> 1$,
$$\bE \left[\left(  \int_0^T \frac{\kappa^1_t (Z^Y_t)^2}{(\phi'(A_t))^2 A_t}dt\right)^p \ \right] \leq \bE \left[\left(  \int_0^T \frac{-f'(Y_t) (Z^Y_t)^2}{(f(Y_t))^2 }dt\right)^p \ \right] < +\infty.$$
Combining \eqref{eq:estim_ZA}, 
\eqref{eq:link_ZH_ZY} together this last estimate, we have
%
\begin{equation} \label{eq:integrable_sing_2}
\bE \int_0^T \left[\dfrac{\kappa^1_t \kappa^2_t}{2}  \left( \frac{Z^A_t}{A_t} \right)^2 H_t +\left| \kappa^1_t \frac{Z^A_t}{A_t} Z^H_t \right| \right] < +\infty
\end{equation}
if we can prove that for some $p>1$
$$\bE \left[ \left( \sup_{t \in [0,T]} \frac{H_t}{(A_t)^{1-\rho}}\right)^p\ \right] < +\infty.$$
Arguing as in the proof of Lemma \ref{lem:integrable_0_T}, we know that 
$$0\leq \frac{H_t}{(A_t)^{1-\rho}}\leq (\etamax)^{1-\rho} \frac{H_t}{(T-t)^{1-\rho}} \leq  (\etamax)^{1-\rho}  \left(\dfrac{2\etamax}{\etamin} \right)^K  \frac{\phi((T-t)/(2\etamax))}{-\phi'((T-t)/(2\etamax))(T-t)^{1-\rho}}.$$
The last term is deterministic and if we prove that this term remains bounded on $[0,T]$, the result follows. Note that 
\begin{eqnarray*} 
\frac{\phi((T-t)/(2\etamax))}{-\phi'((T-t)/(2\etamax))(T-t)^{1-\rho}} &=& \frac{\phi(x)}{-(2\etamax)^{1-\rho}\phi'(x)x^{1-\rho}} =  \frac{\phi(x)}{-(2\etamax)^{1-\rho}f(\phi(x))x^{1-\rho}} \\
&=& \frac{y}{-(2\etamax)^{1-\rho}f(y) G(y)^{1-\rho}} =\frac{y}{(2\etamax)^{1-\rho}h(y)} .
\end{eqnarray*}
with $x=(T-t)/(2\etamax)$ and $y=\phi^\star(x)$. From Condition \ref{C4_tech}, we deduce that \eqref{eq:integrable_sing_2} holds. 

Up to now, we have proved that 
$$\bE \left[ \int_0^T \left| L(s,H_s,Z^H_s) \right| + \dfrac{-f'(\phi(A_s))}{\eta_s} H_s ds \right] + < +\infty.$$
From \eqref{eq:dyn_BSDE_H}, for any $0 < t < T$
\begin{equation*}
\bE(H_0) = \bE(H_t) +\bE  \int_0^t F^H(s,H_s,Z^H_s) ds.
\end{equation*}
The a priori estimate \eqref{eq:a_priori_estim_H} and the preceding arguments imply that there exists some constant $C$ such that 
$$0\leq H_t \leq C \left( \frac{ \phi}{-\phi'} \right) \left( \frac{T-t}{2\etamax} \right) $$
and again we know that $y\mapsto y/(-f(y))$ tends to zero at infinity. Thus 
$$\lim_{t \to T} \bE (H_t) = 0.$$ 
Recall that
\begin{eqnarray*}
F^H(t,H_t,Z^H_t) &= &\frac{-1}{\eta_t \phi'(A_t)} \left[ f(\phi(A_t)-\phi'(A_t) H_t) - f(\phi(A_t)) \right] \\
 & - &\frac{1}{\eta_t } f'(\phi(A_t)) H_t  + L(t,H_t,Z^H_t) 
\end{eqnarray*}
and since $f$ is non increasing, 
$$\frac{1}{\eta_t \phi'(A_t)} \left[ f(\phi(A_t)-\phi'(A_t) H_t) - f(\phi(A_t)) \right]  \geq 0.$$
We deduce that 
$$\bE \left[ \int_0^T \frac{1}{\eta_t \phi'(A_t)} \left[ f(\phi(A_t)-\phi'(A_t) H_t) - f(\phi(A_t)) \right]  ds \right] < +\infty.$$
Therefore, again from \eqref{eq:dyn_BSDE_H}, for any $0 < t <  T$
\begin{equation*}
H_0 = H_t + \int_0^t F^H(s,H_s,Z^H_s) ds - \int_0^t Z^H_s dW_s,
\end{equation*}
and we deduce that 
$$\bE \left[ \sup_{0 < t < T} \left|  \int_0^t Z^H_s dW_s\right| \right] < +\infty.$$
By the Burkholder-Davis-Gundy inequality, we deduce that $Z^H$ is an element of $\bH^1(0,T)$, that is $(H,Z^H)$ is a solution of the BSDE \eqref{eq:BSDE_H} on $[0,T]$.
\end{proof} 

Note that we do not use the condition \ref{H_diff} on $\eta$. The additional assumption \ref{H_diff} supposed in Theorem  \ref{thm:uniq_sing_BSDE} is sufficient to ensure uniqueness\footnote{Without \ref{H_diff}, even the existence of a minimal solution for the BSDE \eqref{eq:BSDE_H} is unclear.} of the solution $(H,Z^H)$.

\begin{Prop} \label{prop:sing_gene_uniqueness}
Under {\rm \ref{A1}} to {\rm \ref{A3}}, {\rm \ref{C1_concave}} to {\rm \ref{C4_tech}} and {\rm \ref{H_diff}}, there exists a unique process $(H,Z^H)$ solution (in the sense of Definition \ref{def:sing_gene_sol}) of the BSDE \eqref{eq:BSDE_H}. 
\end{Prop}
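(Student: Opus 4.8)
The plan is to separate the two assertions. Existence costs nothing new: the hypotheses here strictly contain those of Proposition \ref{prop:existence_sol_BSDE_sing_gen}, so the non-negative pair $(H,Z^H)$ produced there from the minimal solution $(Y,Z^Y)$ via $H_t=(\phi(A_t)-Y_t)/\phi'(A_t)$ already solves \eqref{eq:BSDE_H} in the sense of Definition \ref{def:sing_gene_sol}; the extra assumption \ref{H_diff} will be used only to force uniqueness. So the whole content is to show that two solutions $(H^1,Z^1)$ and $(H^2,Z^2)$ agree. As a preliminary I would record that \emph{every} solution obeys the a priori bounds \eqref{eq:a_priori_estim_H}: running the correspondence \eqref{eq:asymp_beha_Y} backwards, $Y:=\phi(A)-\phi'(A)H$ solves \eqref{eq:sing_BSDE} on $[0,T)$, and since the upper estimate \eqref{eq:a_priori_estim_Y} holds for any solution of \eqref{eq:sing_BSDE}, one gets $0\le H_t\le \vartheta(T-t)/(-\phi'(A_t))$, a deterministic bound that tends to $0$ at $T$ because $y/(-f(y))\to0$. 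Hence $\delta H:=H^1-H^2$ is dominated near $T$ by a deterministic function vanishing at $T$, and $\delta H_T=0$ by continuity.

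The decisive structural remark is that concavity \ref{C1_concave} makes the nonlinear part $F$ dissipative. Indeed $\partial_h F(t,h)=\eta_t^{-1}\big[f'(\phi(A_t)-\phi'(A_t)h)-f'(\phi(A_t))\big]\le0$, since $f'$ is non-increasing and $\phi(A_t)-\phi'(A_t)h\ge\phi(A_t)$; consequently $\delta H_s\,\big(F(s,H^1_s)-F(s,H^2_s)\big)\le0$. This is exactly what tames the genuinely singular coefficient $\kappa^1/(\eta A)\sim c/(T-s)$: it is absorbed by the monotonicity of $F$ and never appears in the energy estimate. Applying It\^o to $|\delta H|^2$ on $[t,T-\eps]$, discarding the non-positive $F$-contribution, and handling the linear $z$-term $\kappa^1_s(Z^A_s/A_s)\delta Z_s$ by Young's inequality to absorb the $|\delta Z|^2$ produced by It\^o's formula, I am left with
\begin{equation*}
|\delta H_t|^2\le |\delta H_{T-\eps}|^2+\int_t^{T-\eps}\Lambda_s|\delta H_s|^2\,ds-2\int_t^{T-\eps}\delta H_s\,\delta Z_s\,dW_s,\qquad \Lambda_s=\big(\kappa^1_s\kappa^2_s+(\kappa^1_s)^2\big)\Big(\frac{Z^A_s}{A_s}\Big)^2,
\end{equation*}
where $\kappa^1,\kappa^2$ are bounded by the analysis of Section \ref{ssect:generator_f_H}.

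Localizing the local martingale, taking conditional expectations and invoking a backward stochastic Gronwall inequality then gives, for $t$ in a deterministic neighbourhood $[\widehat T,T]$ of the terminal time,
\begin{equation*}
|\delta H_t|^2\le \bE\Big[\,|\delta H_{T-\eps}|^2\exp\Big(\int_t^{T-\eps}\Lambda_s\,ds\Big)\,\Big|\,\cF_t\Big].
\end{equation*}
Here \ref{H_diff} enters through Lemma \ref{lem:expo_moment_1}: shrinking $[\widehat T,T]$ so that the level $\Sigma$ exceeds $\|\kappa^1\|_\infty\|\kappa^2\|_\infty+\|\kappa^1\|_\infty^2$, the weight $\exp(\int_{\widehat T}^T\Lambda_s\,ds)$ is integrable. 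Since $|\delta H_{T-\eps}|$ is deterministically bounded and converges to $0$, dominated convergence as $\eps\to0$ forces $\delta H\equiv0$ on $[\widehat T,T]$. On $[0,\widehat T]$ all coefficients are non-singular ($A$ is bounded away from $0$, $Z^A\in\bH^p$, and $\kappa^1,\kappa^2,F^H(\cdot,0,0)$ are bounded), so the identity $\delta H_{\widehat T}=0$ together with the same energy estimate — now driven by an integrable weight — propagates $\delta H\equiv0$ down to $t=0$ by classical monotone-BSDE uniqueness.

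The hard part will be the terminal passage $\eps\to0$, i.e.\ showing $\bE[\,|\delta H_{T-\eps}|^2\exp(\int_t^{T-\eps}\Lambda_s\,ds)\,]\to0$. This is precisely where the two pillars of the hypotheses cooperate: concavity \ref{C1_concave} removes the non-integrable $1/(T-s)$ singularity from the exponent via dissipativity, while \ref{H_diff} renders the surviving quadratic weight $(Z^A/A)^2$ exponentially integrable near $T$ (Lemma \ref{lem:expo_moment_1}). Matching the Young constant to the available exponential level $\Sigma$, and justifying the localization and conditional-expectation steps using only the integrability granted by Definition \ref{def:sing_gene_sol} together with the a priori bounds, are the remaining technical points.
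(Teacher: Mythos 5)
Your middle step is essentially sound and close in spirit to the paper's: concavity \ref{C1_concave} makes the nonlinear part $F$ dissipative (indeed, since $F\leq 0$ on $\{h\geq 0\}$ and $F=0$ on $\{h\leq 0\}$, your monotonicity claim holds globally, without any sign information on the solutions — a genuine simplification over the paper, which drops the corresponding term only after proving $H\geq 0$), and \ref{H_diff} via Lemma \ref{lem:expo_moment_1} controls the surviving quadratic weight near $T$; needing a level $\Sigma$ larger than the one stated in that lemma is harmless, since its proof makes the BMO norm, hence the admissible $\Sigma$, arbitrary by shrinking $[\widehat T,T]$ (cf.\ \eqref{eq:exponen_moment_Z^A_bis}). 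The first genuine gap is your preliminary a priori bound. You claim every solution of \eqref{eq:BSDE_H} satisfies $0\leq H_t\leq \vartheta(T-t)/(-\phi'(A_t))$ by ``running the correspondence backwards,'' but this is circular: the generator \eqref{eq:gene_F} carries the indicator $\mathbf 1_{h\geq 0}$, so on the a priori possible set $\{H_t<0\}$ the process $Y:=\phi(A)-\phi'(A)H$ does \emph{not} satisfy the dynamics of \eqref{eq:sing_BSDE} (the reconstruction of $\frac{1}{\eta}f(Y)$ works only where $H\geq 0$), and Definition \ref{def:sing_cond_sol} moreover requires $Y\geq 0$ and $Y_t\to+\infty$, neither of which is known without a sign on $H$. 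The lower bound $H\geq 0$ is not free either: Lemma \ref{lem:lower_bound} is proved only for the \emph{minimal} solution of \eqref{eq:sing_BSDE}. Non-negativity of an \emph{arbitrary} solution of \eqref{eq:BSDE_H} is exactly the first half of the paper's proof: It\^o on $(\widehat H)^-$, using $F^H=L$ for $h\leq 0$ and $\lambda/(-\phi'(A))\geq 0$, $\kappa^1(Z^A)^2/(2A)\geq 0$, then Girsanov (Novikov via Lemma \ref{lem:expo_moment_1}) and the representation of linear BSDEs. Without that step, your dominated-convergence passage $\eps\to 0$ — which needs a deterministic bound on $|\delta H_{T-\eps}|$ — has no support; Definition \ref{def:sing_gene_sol} only provides $L^1$-type integrability, so $|\delta H_{T-\eps}|^2\exp(\int\Lambda)$ need not even be integrable.

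The second gap is the extension to $[0,\widehat T]$. It is not true that ``all coefficients are non-singular'' there in a usable sense: the coefficient of $h$ in \eqref{eq:gene_L} is $\frac{\kappa^1\kappa^2}{2}(Z^A/A)^2$, which is unbounded on $[0,\widehat T]$, and Lemma \ref{lem:expo_moment_1} yields exponential moments of $\int (Z^A/A)^2\,ds$ only on a short interval near $T$, where a small-BMO argument applies. On $[0,\widehat T]$ the martingale $\int Z^A\,dW$ is merely BMO and the exponential of its bracket is not controlled — the paper flags precisely this obstruction in Section \ref{ssect:generator_f_H}, and explicitly states in its proof that uniqueness on $[0,\widehat T]$ ``is not trivial.'' Consequently neither your energy estimate (the weight $\exp(\int_0^{\widehat T}\Lambda_s\,ds)$ is not known to be integrable) nor classical monotone-BSDE uniqueness applies on $[0,\widehat T]$. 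The paper closes this interval by a different device, which you need in some form: once $H=\widehat H$ on $[\widehat T,T]$, set $\widehat Y=\phi(A)-\phi'(A)\widehat H$ on $[0,\widehat T]$; this solves the BSDE \eqref{eq:sing_BSDE} with the \emph{bounded} terminal condition $\phi(A_{\widehat T})-\phi'(A_{\widehat T})H_{\widehat T}$, for which uniqueness of monotone BSDEs with bounded data holds, and this transfers back to $\widehat H=H$ on $[0,\widehat T]$.
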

\begin{proof}
Let us show first that any solution $(\widehat H,\widehat Z)$ of \eqref{eq:BSDE_H} is non-negative. From the It\^o formula for the non-positive part of $\widehat H$ we obtain for $t \in [\widehat \tau,T]$:
\begin{eqnarray*}
\left(\widehat H _t \right)^-  &\leq & - \int_t^{T} F^H(s,\widehat H_s,\widehat Z_s) \mathbf 1_{\widehat H_s\leq 0} ds + \int_t^{T} (\widehat Z_s)  \mathbf 1_{\widehat H_s\leq 0} dW_s \\
& = &  - \int_t^T \left[ \frac{\lambda_s}{-\phi'(A_s)} +\frac{ \kappa^1_s}{2} \frac{(Z^A_s)^2}{A_s}+ \frac{\kappa^1_s \kappa^2_s}{2}  \left( \frac{Z^A_s}{A_s} \right)^2 \widehat H_s + \kappa^1_s \frac{Z^A_s}{A_s} \widehat Z_s\right]   \mathbf 1_{\widehat H_s\leq 0}  ds \\
&& \quad + \int_t^{T} (\widehat Z_s)  \mathbf 1_{\widehat H_s\leq 0} dW_s \\
& \leq & \int_t^T\frac{ \kappa^1_s\kappa^2_s}{2}  \left( \frac{Z^A_s}{A_s} \right)^2 \left( \widehat H_s \right)^- \, ds 
-\int_t^T  \kappa^1_s \frac{Z^A_s}{A_s} \widehat Z_s   \mathbf 1_{\widehat H_s\leq 0}  ds + \int_t^{T} (\widehat Z_s)  \mathbf 1_{\widehat H_s\leq 0} dW_s.
\end{eqnarray*}
From the condition \ref{H_diff}, Lemma \ref{lem:expo_moment_1} and the Novikov's criterion, the martingale 
\begin{equation} \label{eq:loc_mart}
\cE(Z^A)_t = \exp \left( \int_{\widehat T}^t \kappa^1_s \frac{Z^A_s}{A_s} dW_s + \frac{1}{2} \int_{\widehat T}^t (\kappa^1_s)^2 \left( \frac{Z^A_s}{A_s} \right)^2 ds \right),\qquad t \in [\widehat T,T],
\end{equation}
is uniformly integrable. Using Girsanov's theorem and the expression of the solution of a linear BSDE (see \cite[Proposition 5.31]{pard:rasc:14}), we get that a.s. for any $t\in [\widehat T,T]$, $\left(\widehat H _t \right)^- =0$. 

%
%

Now we prove uniqueness of the solution. Consider $(\widehat H,\widehat Z)$ is another solution of \eqref{eq:BSDE_H}. Since $f$ is concave, we have for any $h\geq 0$ and $h'\geq 0$:
\begin{eqnarray*} 
&& F(t,h) - F(t,h')= \frac{-1}{\eta_t \phi'(A_t)} \left[ f(\phi(A_t)-\phi'(A_t) h) - f(\phi(A_t)) +f'(\phi(A_t)) \phi'(A_t) h \right]  \\
&&\qquad  \frac{1}{\eta_t \phi'(A_t)} \left[ f(\phi(A_t)-\phi'(A_t) h') - f(\phi(A_t)) +f'(\phi(A_t)) \phi'(A_t) h' \right] \\
&&\ =  \frac{-1}{\eta_t \phi'(A_t)} \left[ f(\phi(A_t)-\phi'(A_t) h)-f(\phi(A_t)-\phi'(A_t) h')  + f'(\phi(A_t)-\phi'(A_t) h') \phi'(A_t) (h-h')\right]  \\
&&\qquad + \frac{ f'(\phi(A_t)-\phi'(A_t) h')-f'(\phi(A_t))}{\eta_t } \left(   h -  h' \right) \\
&&\ \leq \frac{ f'(\phi(A_t)-\phi'(A_t) h')-f'(\phi(A_t))}{\eta_t } \left(   h -  h' \right).
 \end{eqnarray*}
Hence if $\Delta H = \widehat H - H$, $\Delta Z = \widehat Z - Z^H$, then
\begin{eqnarray*}
(\Delta H _t)^+  &\leq & 
 \int_t^{T} \frac{1}{\eta_s} \left\{f'(\phi(A_s)-\phi'(A_s) H_s)-f'(\phi(A_s)) \right\} ( \Delta H_s )^+  ds \\
& + &  \int_t^T \left[ \frac{\kappa^1_s \kappa_s^2}{2}  \left( \frac{Z^A_s}{A_s} \right)^2 \Delta H_s + \kappa^1_s\frac{Z^A_s}{A_s}  \Delta Z^H_s \right] \mathbf{1}_{\Delta H_s \geq 0} \,ds- \int_t^T \Delta Z^H_s  \mathbf{1}_{\Delta H_s \geq 0}dW_s.
\end{eqnarray*}
But $f'$ is non-increasing, $H\geq 0$ and $-\phi' \geq 0$. Hence
$$(\Delta H _t)^+ \leq   \int_t^T \left[ \frac{\kappa^1_s \kappa_s^2}{2}  \left( \frac{Z^A_s}{A_s} \right)^2 (\Delta H_s)^+ + \kappa^1_s\frac{Z^A_s}{A_s}  \Delta Z^H_s \mathbf{1}_{\Delta H_s \geq 0}\right]  \,ds- \int_t^T \Delta Z^H_s  \mathbf{1}_{\Delta H_s \geq 0}dW_s.$$
Arguing as before yields that $(\Delta H)^+$ is equal to zero, that is $\widehat H \leq H$. Then uniqueness holds on $[\widehat T, T]$. 

Let us extend uniqueness on the whole time interval $[0,T]$. If $(\widehat H,\widehat Z)$ still denotes another solution, then the two processes solve the same BSDE \eqref{eq:BSDE_H} on $[0,\widehat T]$ with the same terminal condition $H_{\widehat T}=\widehat H_{\widehat T}$. Since the generator of \eqref{eq:BSDE_H} remains singular on the whole interval (due to the linear term), uniqueness on the rest of the time interval $[0,\widehat T]$ is not trivial. But if we define
$$\widehat Y_t = \phi(A_t) -  \phi'(A_t) \widehat H_t,$$
then $\widehat Y$ is the first part of the solution of the BSDE \eqref{eq:sing_BSDE} on $[0,\widehat T]$ with the bounded terminal condition $\phi(A_{\widehat  T}) - \phi'(A_{\widehat  T}) \widehat H_{\widehat  T}$. Since uniqueness holds for the BSDE \eqref{eq:sing_BSDE}, we deduce that $\widehat H=H$ also on $[0,\widehat T]$. 
\end{proof}

To finish the proof of Theorem \ref{thm:uniq_sing_BSDE}, we only need to prove uniqueness of the solution of the BSDE \eqref{eq:sing_BSDE} (in the sense of Definition \ref{def:sing_cond_sol}). Let us consider $(\widehat Y, \widehat Z)$ solution of the BSDE \eqref{eq:sing_BSDE}. Then it satisfies \eqref{eq:a_priori_estim_Y} and the property of Lemma \ref{lem:estim_ZY}. Therefore if we define $\widehat H = (\widehat Y - \phi(A))/(-\phi'(A))$, then this process $\widehat H$ and the related $\widehat Z^H$ solve the BSDE \eqref{eq:BSDE_H} (in the sense of Definition \ref{def:sing_gene_sol}). From the previous proposition, $\widehat H = H$, and thus $\widehat Y = Y$.  This achieves the proof of our main result. 

\begin{Rem}
In fact we can replace {\rm \ref{H_diff}} by any condition such that the local martingale defined by \eqref{eq:loc_mart} 
is a uniformly integrable martingale on some interval $[\widehat T, T]$ and such that we can apply Girsanov's theorem. 
\end{Rem}

\subsection{Asymptotic behaviour} \label{ssect:asymp_behaviour}

From \eqref{eq:a_priori_estim_Y}, $-\phi'(A_t) H_t \leq \vartheta(T-t)$ and we already proved that on some interval $[\widehat T,T]$ , 
$$\frac{-\phi'(A_t) H_t }{\phi(A_t)} \leq \frac{\phi \left( (T-t)/2\etamax\right)}{\phi \left( (T-t)/\etamin\right)}.$$
\begin{Lem} \label{lem:asymp_boundedness}
There exists a constant $C_{\eta}$ depending on $\etamax$ and $\etamin$ such that for any $t \in [\widehat T,T]$,
$$0\leq \frac{-\phi'(A_t) H_t }{\phi(A_t)}\leq C_{\eta}.$$
\end{Lem}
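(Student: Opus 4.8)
The plan is to reduce the claim to the boundedness, as $x\to 0^+$, of the purely deterministic ratio $\phi(x)/\phi(cx)$ for the fixed constant $c=2\etamax/\etamin>1$, and then to read this off from the concavity of the inverse function $\psi^\delta$ furnished by \ref{C2_tech}--\ref{C3_tech}.

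First I would record the reduction already prepared above: combining the a priori bound \eqref{eq:a_priori_estim_Y}, Lemma \ref{lem:control_vartheta} (with $\eps=1$, so that $\vartheta(T-t)\leq\phi((T-t)/(2\etamax))$) and the estimate $A_t\leq(T-t)/\etamin$ from \eqref{eq:bound_A} together with the monotonicity of $\phi$, one obtains on a deterministic neighbourhood $[\widehat T,T]$ of $T$
$$0\leq\frac{-\phi'(A_t)H_t}{\phi(A_t)}\leq\frac{\phi\big((T-t)/(2\etamax)\big)}{\phi\big((T-t)/\etamin\big)}.$$
Setting $x=(T-t)/(2\etamax)$, so that $(T-t)/\etamin=cx$ with $c=2\etamax/\etamin>1$, it then suffices to bound $\phi(x)/\phi(cx)$ for $x$ in a right neighbourhood of $0$.

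The key step is a change of variables into $\psi^\delta$. Let $\Psi(y):=G(y)^{-\delta}$, which by \ref{C2_tech} is increasing and convex on $[R,\infty)$; its inverse is precisely the increasing concave function $\psi^\delta=G^{-1}(\cdot^{-1/\delta})$ of \ref{C3_tech}. Writing $y_1=\phi(x)$ and $y_2=\phi(cx)$, we have $G(y_1)=x$ and $G(y_2)=cx$, hence $\Psi(y_2)=(cx)^{-\delta}=c^{-\delta}\Psi(y_1)$. With $w:=\Psi(y_1)=x^{-\delta}$ (which tends to $+\infty$ as $x\to 0^+$) and $\beta:=c^{-\delta}\in(0,1)$ this yields
$$\frac{\phi(x)}{\phi(cx)}=\frac{y_1}{y_2}=\frac{\psi^\delta(w)}{\psi^\delta(\beta w)}.$$
Shrinking $\widehat T$ towards $T$ if necessary (so that $(T-\widehat T)/\etamin<G(R)$), I may assume that both $w$ and $\beta w$ lie in the range $[w_0,\infty)$, $w_0:=G(R)^{-\delta}$, on which $\psi^\delta$ is concave. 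Expressing $\beta w=\theta w+(1-\theta)w_0$ with $\theta=(\beta w-w_0)/(w-w_0)\in(0,1]$, concavity and positivity of $\psi^\delta$ give $\psi^\delta(\beta w)\geq\theta\,\psi^\delta(w)$, whence $\phi(x)/\phi(cx)\leq 1/\theta=(w-w_0)/(\beta w-w_0)$. This last quantity is continuous in $w$ and tends to $1/\beta=c^\delta$ as $w\to\infty$, so it is bounded on the relevant range by a constant $C_\eta$ depending only on $c=2\etamax/\etamin$ (and on the fixed structural exponent $\delta$ of \ref{C2_tech}), which is exactly the asserted bound.

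The only genuine content is the translation in the third paragraph: the ratio of the exploding quantity $\phi$ is governed by its concave inverse $\psi^\delta$ through the substitution $w=G(\cdot)^{-\delta}$, after which boundedness reduces to the elementary fact that an increasing concave function cannot grow faster than linearly. The endpoint bookkeeping is routine; note in particular that \ref{A3} forces $\psi^\delta(0^+)=G^{-1}(+\infty)=0$ with $\psi^\delta>0$ on $(0,\infty)$, so that the choice of $\widehat T$ keeps $\psi^\delta(\beta w)$ bounded away from $0$ and no division by zero occurs. I expect the main (minor) obstacle to be only the careful choice of $\widehat T$ ensuring that all arguments of $\psi^\delta$ remain inside its concavity region.
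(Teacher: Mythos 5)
Your proof is correct, and while the reduction to the deterministic ratio $\phi\bigl((T-t)/(2\etamax)\bigr)/\phi\bigl((T-t)/\etamin\bigr)$ is exactly the one the paper performs just before the lemma, your treatment of that ratio takes a genuinely different route. The paper works with the \emph{derivative} form of \ref{C2_tech}: it invokes \eqref{eq:increment_deriv_phi} (i.e.\ $-\phi'(ay)\leq a^{-K}(-\phi'(y))$, itself a consequence of Lemma \ref{lem:bounded_cond}), integrates this differential inequality to get $\phi(ay)\leq a^{1-K}\phi(y)+C$, and concludes from $\phi(0^+)=+\infty$. You instead use the \emph{convexity} form of \ref{C2_tech} directly: the substitution $w=G(\cdot)^{-\delta}$ turns the ratio into $\psi^\delta(w)/\psi^\delta(\beta w)$ for the concave inverse $\psi^\delta$, and a secant-line estimate gives the bound $(w-w_0)/(\beta w-w_0)\to 1/\beta$. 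Reassuringly the two arguments produce the same asymptotic constant, since $1/\beta=c^\delta=a^{1-K}$ with $K=\delta+1$. What the paper's route buys is brevity, because \eqref{eq:increment_deriv_phi} is already established and reused elsewhere; what yours buys is that it never differentiates or integrates $\phi'$, working straight from the hypothesis as stated, at the price of the extra bookkeeping you correctly carry out to keep $w$ and $\beta w$ in the concavity region $[w_0,\infty)$ (the choice of $\widehat T$ with $(T-\widehat T)/\etamin<G(R)$, which the paper handles implicitly through the restriction $y\in(0,1/R)$ in \eqref{eq:increment_deriv_phi}). One cosmetic point: you cite \ref{C2_tech}--\ref{C3_tech}, but only \ref{C2_tech} is actually used; the concavity of $\psi^\delta$ already follows from the convexity of $G^{-\delta}$, and \ref{C3_tech} concerns $(-f)\circ\psi^\delta$, which plays no role here.
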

\begin{proof}
Recall that since $\phi$ is non increasing, $\phi(y) \leq \phi(ay)$. Moreover with \eqref{eq:increment_deriv_phi}, from the very definition of $-\phi'$, this inequality can be written as: $- \phi'(ay) \leq -a^{-K}  \phi'(y)$. 
Integrating this inequality (between $y$ and $\eta$) leads to:
$$ \phi(ay) \leq a^{1-K} \phi(y) + C$$
for some constant $C \geq 0$. Hence
$$1 \leq \frac{\phi(ay)}{\phi(y)} \leq a^{1-K} + \frac{C}{\phi(y)}.$$
Since $\phi(0)=\infty$, the conclusions follows from this inequality.
\end{proof}

Hence we have proved that the minimal solution $Y$ of the BSDE \eqref{eq:sing_BSDE} satisfies a.s. on the interval $[\widehat T,T]$:
$$\phi(A_t) \leq Y_t \leq \phi(A_t) (1+ C_\eta).$$

Furthermore if $\eta$ or $1/\eta$ is an It\^o process (condition \ref{H_diff}), then using \eqref{eq:decomp_A_Ito_case}:
\begin{eqnarray*} 
\phi(A_t) & = & \phi \left(\gamma_t (T-t) + \bE \left[  \int_t^T (T-u) b^\gamma_u du  \bigg| \cF_t \right]\right) \\
& = & \phi \left(\gamma_t (T-t) \right) \\
& + & \bE \left[  \int_t^T (T-u) b^\gamma_u du  \bigg| \cF_t \right]  \int_0^1 \phi' \left(\gamma_t (T-t) + a \bE \left[  \int_t^T (T-u) b^\gamma_u du  \bigg| \cF_t \right]\right) da.
\end{eqnarray*}
With \eqref{eq:increment_deriv_phi} and with Condition \ref{A1} and for a bounded process $b^\gamma$, we deduce that there exists a constants $C$ such that 
$$\frac{1}{C} \phi'(T-t) \leq \phi' \left(\gamma_t (T-t) + a \bE \left[  \int_t^T (T-u) b^\gamma_u du  \bigg| \cF_t \right]\right) \leq C \phi'(T-t).$$
Thus 
$$\phi(A_t) =  \phi (\gamma_t (T-t)) - \phi'(T-t) \kappa_t (T-t)^2$$
with a bounded process $\kappa$. And using \eqref{eq:bound_A} to control $\phi'(A_t)$, the asymptotic development \eqref{eq:asymp_beha_Y} becomes: for some bounded process $\widehat \kappa$:
$$Y_t = \phi\left(\dfrac{T-t}{\eta_t}\right) - \phi'(T-t)\widehat \kappa_t \left[ (T-t)^2 + H_t\right].$$
%

 \section{Some extensions} \label{sect:general_gene}

\subsection{The power case $f(y)=-y|y|^q$} \label{ssect:power_case}

In this case recall that $\phi(x) = \left( \dfrac{1}{qx}\right) ^{1/q}$ and $-\phi'(x) = \left( \dfrac{1}{qx}\right) ^{1+1/q}$ and we assume that $\eta$ is an It\^o process,
\begin{equation} \label{eq:eta-ito}
d\eta_t=b^\eta_t\,dt+\sigma^\eta_t\,dW_t,
\end{equation}
such that $b^\eta \in \bL^\infty(\Omega\times [0,T] ;\mathbb R)$ and $\sigma^\eta \in \bL^2(\Omega\times [0,T];\mathbb R^d)$.
As mentioned as the end of the preceding section, we consider 
$$ \phi\left( \frac{T-t}{\eta_t}\right)=\left( \frac{\eta_t}{q(T-t)}\right)^{1/q} = \frac{\zeta_t}{(q(T-t))^{1/q}} =\zeta_t \phi(T-t)$$ 
and again from condition \ref{A1}, $\zeta$ is an It\^o process with drift $b^\zeta\in  L^\infty([0,T]\times \Omega;\mathbb R)$ and diffusion matrix $\sigma^\zeta \in L^2([0,T]\times \Omega;\mathbb R^d)$. 
We replace \eqref{eq:asymp_beha_Y} by \eqref{eq:asymp_beha_power_case}:
\begin{equation*}
Y_t=\zeta_t\phi(T-t)-\phi'(T-t) H^{\#}_t = \left( \eta_t \right)^{1/q} \phi(T-t)-\phi'(T-t) H^{\sharp}_t ,
\end{equation*}
and we obtain the dynamics for $H^{\sharp}$:
\begin{eqnarray}\nonumber 
-dH^{\sharp}_t&=&(-\phi'(T-t))^{-1}\left[ \lambda_t+\phi(T-t) b^\zeta_t \right] \, dt - Z^{H^\sharp}_t dW_t \\ \nonumber
&+& \dfrac{f(\zeta_t\phi(T-t)-\phi'(T-t)H^\sharp_t)-f(\zeta_t\phi(T-t))+f'(\zeta_t\phi(T-t))\phi'(T-t) H^\sharp_t}{(-\phi'(T-t)) \eta_t} \,dt \\  \label{eq:BSDE_H_power_case} 
&=& F^\sharp(t,H^\sharp_t)\,dt-Z^{H^\sharp}_tdW_t,
\end{eqnarray}
where $F^\sharp$ can be rewritten as 
\begin{align*}
F^\sharp(t,h)&=\frac{\lambda_t}{-\phi'(T-t)}+\frac{\phi(T-t)}{-\phi'(T-t)} b^\zeta_t \\
& +\frac{-\phi'(T-t)h^2}{\eta_t}\int_0^1f''(\zeta_t\phi(T-t)-a\phi'(T-t) h)(1-a)\,da,
\end{align*}
where $f''(y) =-q(q+1) |y|^{q-1}\mbox{sign}(y)$. Since $-\phi'(x) = \dfrac{\phi(x)}{qx} $ and $\phi(x)^q= 1/(qx)$, we obtain that 
\begin{align*}
F^\sharp(t,h) &\ =F^\sharp(t,0)-\frac{(q+1)h^2}{q\eta_t (T-t)^2}\int_0^1\left( \zeta_t+\dfrac{ah}{q(T-t)}\right)^{q-1}\mbox{sign}\left( \zeta_t+\frac{ah}{q(T-t)}\right)(1-a)\,da.
\end{align*}
Moreover 
$$F^\sharp(t,0) = q(T-t) \left[\lambda_t (q(T-t))^{1/q} + b^\zeta_t \right].$$
Let us remark that this generator is again {\it singular} and that the second derivative of $f$ is not well-defined at zero if $0< q < 1$.

To establish local existence for \eqref{eq:BSDE_H_power_case}, we don't use monotonicity arguments. But instead, we proceed very similar as in \cite{grae:hors:sere:18} and carry out the Picard iteration in the space $\mathcal H^\delta$ defined by \eqref{eq:def_cH_delta} and \eqref{eq:def_norm_cH_delta}. 
\begin{Lem}  \label{lem:bounded_generator}
Let $R>0$ and $\delta\in(0,(\etamin)^{1/q}/(qR))$ then for every $H\in\overline B_{\mathcal H^\delta}(R)$
we have $(F^\sharp(t,H_t))_{t\in[T-\delta,T]}\in L^\infty([T-\delta,T]\times \Omega;\mathbb R)$.
\end{Lem}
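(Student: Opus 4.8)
The plan is to estimate the two pieces of $F^\sharp$ separately, using that membership in $\overline B_{\mathcal H^\delta}(R)$ means precisely that a.s. $|H_t|\le R(T-t)^2$ for every $t\in[T-\delta,T]$, together with the a priori bounds $\etamin\le\eta_t\le\etamax$, $(\etamin)^{1/q}\le\zeta_t=\eta_t^{1/q}\le(\etamax)^{1/q}$ and $0\le\lambda_t\le\lambdamax$ from \ref{A1}, and the essential boundedness of $b^\zeta$. Writing $F^\sharp(t,H_t)=F^\sharp(t,0)+N(t,H_t)$, where $N$ is the nonlinear integral term, I would bound each summand by a deterministic constant depending only on $q,R,\delta,\etamin,\etamax,\lambdamax$ and $\|b^\zeta\|_\infty$.

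First, the source term is immediate: from $F^\sharp(t,0)=q(T-t)\big[\lambda_t(q(T-t))^{1/q}+b^\zeta_t\big]$ and $0\le T-t\le\delta$, the factor $q(T-t)$ is bounded and the bracket is dominated by $\lambdamax(q\delta)^{1/q}+\|b^\zeta\|_\infty$, so $F^\sharp(\cdot,0)\in\bL^\infty([T-\delta,T]\times\Omega;\mathbb R)$. Next, for the prefactor of the integral in $N$ I would use $|H_t|^2\le R^2(T-t)^4$ and $\eta_t\ge\etamin$ to obtain
\[
\left|\frac{(q+1)H_t^2}{q\,\eta_t\,(T-t)^2}\right|\le\frac{(q+1)R^2}{q\,\etamin}\,(T-t)^2\le\frac{(q+1)R^2\delta^2}{q\,\etamin},
\]
again a deterministic bound.

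The crux is the remaining integral factor $\int_0^1\big|\zeta_t+\tfrac{aH_t}{q(T-t)}\big|^{q-1}\operatorname{sign}(\cdot)\,(1-a)\,da$, because for $0<q<1$ the exponent $q-1$ is negative and the integrand is singular wherever its base vanishes. The key observation is that the shift is controlled by the norm, $\big|\tfrac{aH_t}{q(T-t)}\big|\le\tfrac{R(T-t)}{q}\le\tfrac{R\delta}{q}$, and the smallness threshold $\delta<(\etamin)^{1/q}/(qR)$ is imposed exactly so that this shift is dominated by $\zeta_t\ge(\etamin)^{1/q}$. Consequently the base $\zeta_t+\tfrac{aH_t}{q(T-t)}$ never reaches $0$ and stays in a fixed compact subinterval of $(0,\infty)$, uniformly in $t\in[T-\delta,T]$, $a\in[0,1]$ and $\omega$; hence $\big|\cdot\big|^{q-1}$ is bounded above by a deterministic constant (for $q\ge1$ only the upper bound on the base is needed, for $0<q<1$ it is the lower bound away from $0$ that matters), and since $\int_0^1(1-a)\,da=\tfrac12$ the whole integral is uniformly bounded.

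The main obstacle is precisely this last step: keeping the base of the singular power bounded away from zero in the regime $0<q<1$, which is the sole reason for the explicit smallness threshold on $\delta$ involving $(\etamin)^{1/q}$. Once this positivity is secured, multiplying the three deterministic bounds for the pieces of $N$ and adding the bound for $F^\sharp(\cdot,0)$ yields the claimed membership $(F^\sharp(t,H_t))_{t\in[T-\delta,T]}\in\bL^\infty([T-\delta,T]\times\Omega;\mathbb R)$.
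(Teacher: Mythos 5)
Your proof is correct and follows essentially the same route as the paper's: split off $F^\sharp(\cdot,0)$, bound the quadratic prefactor by $(q+1)R^2\delta^2/(q\etamin)$, and use the smallness of $\delta$ to keep the base $\zeta_t+aH_t/(q(T-t))$ positive so that the integral factor is uniformly bounded. If anything you are slightly more careful than the paper, which only records the upper bound $\left((\etamax)^{1/q}+R\delta/q\right)^{q-1}$ (valid as an upper bound when $q\geq 1$), whereas you correctly point out that for $0<q<1$ the exponent $q-1$ is negative and it is the uniform lower bound on the base away from zero that does the work.
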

\begin{proof}
From our assumptions, 
$F^\sharp(\cdot,0)$ is bounded and thus in $L^\infty([0,T]\times \Omega;\mathbb R)$. By definition if  $H\in\overline B_{\mathcal H^\delta}(R)$, then a.s. for any $t\in [T-\delta,T]$
$$\left| \frac{q+1}{q\eta_t}\frac{H_t^2}{(T-t)^2}\right| \leq \frac{q+1}{q\etamin}R^2\delta^2 .$$
Note that $\delta\in(0,(\etamin)^{1/q}/(qR))$ ensures that $\zeta_t+a H_t/(q(T-t))>0$ for all $t\in [T-\delta,T]$, $a\in[0,1]$. And 
$$\int_0^1\left| \zeta_t+\dfrac{aH_t}{q(T-t)}\right|^{q-1}(1-a)\,da \leq \left( (\etamax)^{1/q} + \dfrac{R\delta}{q}\right)^{q-1} .$$
The lemma is now proved.
\end{proof}

The preceding lemma allows to define by
\[
\Gamma(H)=\left(\mathbb E\left[\left.\int_t^T F^\sharp(s,H_s)\,ds\right|\mathcal F_t\right]\right)_{t\in[T-\delta,T]}
\]
the operator $\Gamma:\overline B_{\mathcal H^\delta}(R)\to L^\infty(\Omega;C([T-\delta,T];\mathbb R))$.

\begin{Lem} \label{lemma-locally-Lip} For every $R>0$ there exists a constant $L>0$ independent of $\delta\in(0,\etamin^{1/q}/(qR))$ such that
\[
|F^\sharp(t,H_t)-F^\sharp(t,H_t')|\leq L|H_t-H_t'| \qquad \forall t\in[T-\delta,T]\ \forall H,H'\in\overline B_{\mathcal H^\delta}(R),\ a.s. 
\]
\end{Lem}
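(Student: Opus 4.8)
The plan is to exploit the quadratic dependence of $F^\sharp$ on its second argument together with the quadratic weight built into the norm $\|\cdot\|_{\mathcal H^\delta}$. First I would record the two structural facts that make the estimate work. Fix $H,H'\in\overline B_{\mathcal H^\delta}(R)$ and write $h=H_t$, $h'=H'_t$; by definition of the norm one has $|h|\vee|h'|\leq R(T-t)^2$ for every $t\in[T-\delta,T]$. Since $\delta<\etamin^{1/q}/(qR)$, the displacement $|ah/(q(T-t))|\leq R(T-t)/q\leq R\delta/q$ is dominated by $\etamin^{1/q}/q^2$, so (as in Lemma \ref{lem:bounded_generator}) the base $\zeta_t+ah/(q(T-t))$, with $\zeta_t=(\eta_t)^{1/q}\in[\etamin^{1/q},\etamax^{1/q}]$, stays in a fixed compact interval $[c_0,c_1]\subset(0,\infty)$ whose endpoints depend only on $q,R,\etamin,\etamax$ and not on $\delta$. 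In particular the sign factor equals $+1$ and the powers $x^{q-1}$, $x^{q-2}$ are bounded on $[c_0,c_1]$.

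Next, since $F^\sharp(t,0)$ is independent of $h$, I would subtract and obtain
\[
F^\sharp(t,h)-F^\sharp(t,h')=-\frac{q+1}{q\eta_t(T-t)^2}\bigl[h^2 I(t,h)-(h')^2 I(t,h')\bigr],\qquad I(t,h):=\int_0^1\Bigl(\zeta_t+\tfrac{ah}{q(T-t)}\Bigr)^{q-1}(1-a)\,da,
\]
and then telescope the bracket as $h^2[I(t,h)-I(t,h')]+(h+h')(h-h')I(t,h')$. For the first piece, the mean value inequality applied to $x\mapsto x^{q-1}$ on $[c_0,c_1]$ together with the bound $|ah/(q(T-t))-ah'/(q(T-t))|\leq |h-h'|/(q(T-t))$ gives $|I(t,h)-I(t,h')|\leq C\,|h-h'|/(T-t)$, whence $h^2|I(t,h)-I(t,h')|\leq C R^2(T-t)^3|h-h'|$. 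For the second piece, $|h+h'|\leq 2R(T-t)^2$ and $|I(t,h')|\leq\tfrac12\max_{[c_0,c_1]}x^{q-1}$ yield a bound $C R(T-t)^2|h-h'|$.

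Finally I would divide by $(T-t)^2$ and use $1/\eta_t\leq1/\etamin$ and $T-t\leq\delta\leq\etamin^{1/q}/(qR)$ to collect
\[
|F^\sharp(t,H_t)-F^\sharp(t,H'_t)|\leq\frac{q+1}{q\etamin}\bigl[C R^2(T-t)+C R\,\max_{[c_0,c_1]}x^{q-1}\bigr]|H_t-H'_t|\leq L\,|H_t-H'_t|,
\]
with $L$ obtained by replacing $T-t$ by its supremum $\etamin^{1/q}/(qR)$; since $c_0,c_1$ and this supremum are independent of the particular $\delta$ in the admissible range, so is $L$.

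The step I expect to be the main obstacle is the uniform control of the base away from $0$: the factors $x^{q-1}$ and $x^{q-2}$ are harmless precisely because $c_0>0$, and keeping $c_0$ bounded below uniformly in $\delta$ is exactly what the smallness condition $\delta<\etamin^{1/q}/(qR)$ buys (for $q$ close to or below $1$ one may need to shrink the admissible range slightly to keep $c_0>0$). Everything else is the routine bookkeeping of matching the singular prefactor $(T-t)^{-2}$ against the surplus powers of $(T-t)$ produced by the weighted bound $|h|\leq R(T-t)^2$, which is the whole point of working in $\mathcal H^\delta$.
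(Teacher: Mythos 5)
Your proof is correct and follows essentially the same route as the paper: the paper bounds the derivative $\tfrac{dF^\sharp}{dH}$ on the ball $|H|\le R(T-t)^2$ and invokes the mean value theorem, while you unroll the same computation by telescoping the difference $h^2I(t,h)-(h')^2I(t,h')$, with both arguments resting on the identical two facts that the base $\zeta_t+ah/(q(T-t))$ stays in a fixed compact subinterval of $(0,\infty)$ and that the quadratic weight in $\|\cdot\|_{\mathcal H^\delta}$ absorbs the singular prefactor $(T-t)^{-2}$. Your closing caveat about needing to shrink the admissible range of $\delta$ when $q\le 1$ to keep the base positive is a fair observation that applies equally to the paper's Lemma \ref{lem:bounded_generator}, but it does not affect the substance of the argument.
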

\begin{proof} 
We have for $q \neq 1$
\begin{align*}
\frac{dF^\sharp}{dH}(t,H)&=-\frac{2(q+1)}{q\eta_t}\frac{H}{(T-t)^2}\int_0^1\left( \zeta_t+\dfrac{aH}{q(T-t)}\right)^{q-1} (1-a)\,da\\
&\quad-\frac{(q+1)(q-1)\mathbf 1_{q\neq 1}}{q\eta_t}  \frac{H^2}{(T-t)^2}\int_0^1\left( \zeta_t+a\dfrac{H}{T-t}\right)^{q-2}\frac{a(1-a)}{q(T-t)}\,da .
\end{align*}
Hence, there exists $L>0$ such that 
\begin{align*}
\left\|\frac{dF^\sharp}{dH}(t,(T-t)^2R)\right\|_\infty\leq L \qquad \forall t\in[T-\delta,T].
\end{align*}
The assertion then follows by the mean value theorem.
\end{proof}

We are now ready to prove that $\Gamma$ maps $\overline B_{\mathcal H^\delta}(R)$ contractiv into itself (for appropriate $R$ and $\delta\in(0,\etamin^{1/q}/(qR))$). For $R>0$ specified below choose $L>0$ as in Lemma~\ref{lemma-locally-Lip}. For $H,H'\in\overline B_{\mathcal H^\delta}(R)$ it then holds for all $t\in[T-\delta,T]$
\begin{align*}
|\Gamma(H)_t-\Gamma(H')_t|&\leq \mathbb E\left[\left.\int_t^T|F^\sharp(s,H_s)-F^\sharp(s,H_s')|\,ds\right|\mathcal F_t\right]\\
\leq & (T-t)^3L \|H-H'\|_{\mathcal H^\delta}.
\end{align*}
This yields, as long as $0<\delta\leq 1/(2L)$,
\[
\|\Gamma(H)-\Gamma(H')\|_{\mathcal H^\delta}\leq \frac{1}{2}\|H-H'\|_{\mathcal H^\delta}.
\]
Hence, $\Gamma$ is an $1/2$-contraction on $\overline B_{\mathcal H^\delta}(R)$ if $\delta\leq 1/(2L)$. Furthermore, for $H\in\overline B_{\mathcal H^\delta}(R)$, 
\begin{align*}
|\Gamma(H)_t|&\leq |\Gamma(H)_t-\Gamma(0)_t|+|\Gamma(0)_t|\\
	&\leq (T-t)^2 \frac{R}{2}+\mathbb E\left[\left.\int_t^T \left[ (q(T-s))^{1+1/q}\lambda_s+q(T-s)|b^\zeta_s|\right] \,ds\right|\mathcal F_t\right]\\
	&\leq (T-t)^2 \frac{R}{2}+(T-t)^2(\delta^{1/q}q^{1+1/q}\lambdamax+q\|b^\zeta\|_\infty).
\end{align*}
Thus, choosing $R=2(q^{1+1/q}\lambdamax+q\|b^\zeta\|_\infty)$ and $\delta=\min\{1,1/2L,\etamin^{1/q}/(qR)\}$ yields $\|\Gamma(H)\|_{\mathcal H^\delta}\leq R$.

\begin{Thm} \label{thm:power_case}
If $\eta$ is given by \eqref{eq:eta-ito}, with $b^\eta$ bounded and $\sigma^\eta$ in $L^2$, then the BSDE \eqref{eq:BSDE_H_power_case} has a unique solution $(H^\sharp,Z^{H^\sharp})$ on $[0,T]$ such that:
$$\left\|\sup_{t\in{[0,T)}}(T-t)^{-2}|H^\sharp_t|\right\|_\infty < +\infty.$$
Moreover $\int_{0}^\cdot Z^{H^\sharp} dW$ is a BMO-martingale. The relation \eqref{eq:asymp_beha_power_case} and the uniqueness for the BSDE \eqref{eq:sing_BSDE} hold. 
\end{Thm}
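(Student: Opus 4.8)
The plan is to promote the local contraction just established into a genuine solution of \eqref{eq:BSDE_H_power_case} on $[T-\delta,T]$, then to glue it to a solution on $[0,T-\delta]$ through the correspondence \eqref{eq:asymp_beha_power_case} with the singular BSDE \eqref{eq:sing_BSDE}, and finally to identify $H^\sharp$ with the object attached to the unique $Y$.

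First I would apply the Banach fixed point theorem to the $1/2$-contraction $\Gamma$ on $\overline B_{\mathcal H^\delta}(R)$, with $R$ and $\delta$ chosen as above, to produce a unique fixed point $H^\sharp\in\overline B_{\mathcal H^\delta}(R)$, so that $H^\sharp_t=\mathbb E[\int_t^T F^\sharp(s,H^\sharp_s)\,ds\mid\mathcal F_t]$ for $t\in[T-\delta,T]$. By Lemma \ref{lem:bounded_generator} the process $F^\sharp(\cdot,H^\sharp)$ is bounded on $[T-\delta,T]$, so the martingale $t\mapsto\mathbb E[\int_{T-\delta}^T F^\sharp(s,H^\sharp_s)\,ds\mid\mathcal F_t]$ admits an integral representation $\int Z^{H^\sharp}\,dW$, and rearranging gives the dynamics \eqref{eq:BSDE_H_power_case}. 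Membership in $\overline B_{\mathcal H^\delta}(R)$ forces $|H^\sharp_t|\leq R(T-t)^2$, hence $\lim_{t\to T}H^\sharp_t=0$, the correct terminal condition. Applying It\^o's formula to $(H^\sharp)^2$ between a stopping time $\tau$ and $T$ and using that both $H^\sharp$ and $F^\sharp(\cdot,H^\sharp)$ are bounded on $[T-\delta,T]$ yields $\sup_\tau\mathbb E[\int_\tau^T|Z^{H^\sharp}_s|^2\,ds\mid\mathcal F_\tau]<+\infty$, i.e. $\int Z^{H^\sharp}\,dW$ is BMO on $[T-\delta,T]$.

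Next I would extend to $[0,T]$ using \eqref{eq:asymp_beha_power_case}. Reversing the It\^o computation that led to \eqref{eq:BSDE_H_power_case}, the process $Y_t:=\zeta_t\phi(T-t)-\phi'(T-t)H^\sharp_t$ solves \eqref{eq:sing_BSDE} on $[T-\delta,T]$, and $Y_{T-\delta}$ is bounded and strictly positive. I would then solve the now non-singular BSDE \eqref{eq:sing_BSDE} on $[0,T-\delta]$ with this bounded terminal datum: since $T-s\geq\delta$ keeps all coefficients bounded and $f(y)=-y|y|^q$ is monotone, standard monotone BSDE theory (\cite[Chapter 5.3.4]{pard:rasc:14}) gives a unique $(Y,Z^Y)$ there, with $Y>0$. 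Inverting \eqref{eq:asymp_beha_power_case}, namely $H^\sharp_t=(\zeta_t\phi(T-t)-Y_t)/\phi'(T-t)$, defines $H^\sharp$ on $[0,T-\delta]$, and positivity of $Y$ keeps the integrand in $F^\sharp$ well defined. On $[0,T-\delta]$ the factor $(T-t)^{-2}$ is bounded and $H^\sharp$ is bounded, so $\|\sup_{t<T}(T-t)^{-2}|H^\sharp_t|\|_\infty<+\infty$ holds on all of $[0,T]$; concatenating the BMO bound on $[T-\delta,T]$ with the $\bH^2$ bound on the compact piece gives that $\int_0^\cdot Z^{H^\sharp}\,dW$ is BMO on $[0,T]$.

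Finally, \eqref{eq:asymp_beha_power_case} holds by construction, and it remains to prove uniqueness for \eqref{eq:sing_BSDE}. The power case satisfies \ref{C1_concave}--\ref{C4_tech} (Subsection \ref{ssect:examples}) and \eqref{eq:eta-ito} is exactly \ref{H_diff}, so Theorem \ref{thm:uniq_sing_BSDE} already gives a unique $Y$; the $Y$ just built must coincide with it, and $H^\sharp$ is then globally determined and unique through the invertible relation \eqref{eq:asymp_beha_power_case}. To keep the argument self-contained one would instead take an arbitrary solution $\widehat Y$, set $\widehat H_t=(\zeta_t\phi(T-t)-\widehat Y_t)/\phi'(T-t)$, and show $\widehat H\in\overline B_{\mathcal H^\delta}(R)$ near $T$: the upper a priori estimate \eqref{eq:power_case_a_priori_estim} (valid for any solution) and the lower bound \eqref{eq:lower_Y_estimate} pin $\widehat Y$ to leading order $\zeta_t\phi(T-t)$, after which local uniqueness of the fixed point forces $\widehat H=H^\sharp$ on $[T-\delta,T]$, and uniqueness on $[0,T-\delta]$ follows from the non-singular BSDE. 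The main obstacle lies precisely in this self-contained route: establishing the sharp second-order bound $|\widehat Y_t-\zeta_t\phi(T-t)|\leq C(T-t)^2(-\phi'(T-t))$, i.e. that $\widehat H$ lands in the weighted ball. Leading-order control does not suffice because $F^\sharp$ is only locally Lipschitz (quadratic in $h$), so the fixed-point uniqueness is a priori confined to a ball; obtaining the $(T-t)^2$ rate requires the shifting and comparison arguments of \cite{grae:hors:sere:18}. Invoking Theorem \ref{thm:uniq_sing_BSDE} circumvents this at the cost of self-containedness.
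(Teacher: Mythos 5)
Your existence argument is the paper's: Banach's fixed point theorem applied to the $1/2$-contraction $\Gamma$ on $\overline B_{\mathcal H^\delta}(R)$ gives the unique fixed point $H^\sharp$ on $[T-\delta,T]$, the martingale representation theorem produces $Z^{H^\sharp}$, the BMO property follows from the boundedness of $F^\sharp(\cdot,H^\sharp)$ supplied by Lemma \ref{lem:bounded_generator}, and $|H^\sharp_t|\leq R(T-t)^2$ is automatic from membership in the ball. The only divergence is the gluing step: you transform back to the $Y$-equation and solve the now non-singular BSDE \eqref{eq:sing_BSDE} on $[0,T-\delta]$ with bounded terminal datum, whereas the paper stays with the BSDE \eqref{eq:BSDE_H_power_case} itself, whose generator on $[0,T-\delta]$ has a bounded one-sided monotonicity constant so that \cite[Proposition 5.24]{pard:rasc:14} applies directly with terminal value $H^\sharp_{T-\delta}$. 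Since the change of variables \eqref{eq:asymp_beha_power_case} is affine and invertible, the two routes are equivalent.

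On the final sentence of the statement (validity of \eqref{eq:asymp_beha_power_case} for the solution of \eqref{eq:sing_BSDE} and uniqueness of the latter), your discussion is more careful than the paper's proof, which is silent on this point. You correctly isolate the real difficulty of the self-contained route: to feed an arbitrary solution $\widehat Y$ of \eqref{eq:sing_BSDE} into the local fixed-point uniqueness one needs the second-order estimate $|\widehat Y_t-\zeta_t\phi(T-t)|\leq C(T-t)^2(-\phi'(T-t))$, and the first-order bounds \eqref{eq:power_case_a_priori_estim} and \eqref{eq:lower_Y_estimate} do not deliver it because uniqueness of the fixed point is confined to a ball of $\mathcal H^\delta$. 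One caveat on your proposed shortcut: Theorem \ref{thm:uniq_sing_BSDE} requires \ref{H_diff}, i.e.\ that $1/\eta$ is an It\^o process with \emph{bounded} drift and diffusion coefficients; under \ref{A1} this forces $\sigma^\eta$ to be bounded, whereas Theorem \ref{thm:power_case} assumes only $\sigma^\eta\in L^2$, so \eqref{eq:eta-ito} is strictly weaker than \ref{H_diff} and the reduction is not free. (Indeed the point of the decomposition \eqref{eq:asymp_beha_power_case} is that $F^\sharp$ contains no $Z$- or $Z^A$-terms, precisely to avoid \ref{H_diff}.) This is less a defect of your write-up than a confirmation that the uniqueness claim for \eqref{eq:sing_BSDE} under the stated hypotheses is not fully argued in the paper either; your closing paragraph names the missing ingredient accurately.
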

\begin{proof}
Using the property of the map $\Gamma$, we deduce that there exists $\delta> 0$ such that there exists a unique process $H^\sharp  \in \mathcal H^\delta$ such that a.s. for any $t \in [T-\delta,T]$:
$$H^\sharp_t = \mathbb E\left[\left.\int_t^T F^\sharp(s,H^\sharp_s)\,ds\right|\mathcal F_t\right].$$
By the martingale representation, we obtain $Z^{H^\sharp}$ and since $H^\sharp \in \mathcal H^\delta$, from Lemma \ref{lem:bounded_generator}, we deduce that the martingale $\int_{T-\delta}^\cdot Z^{H^\sharp} dW$ is a BMO martingale. 

In particular the random variable $H^\sharp_{T-\delta}$ is bounded. If we consider the BSDE \eqref{eq:BSDE_H_power_case} starting at time $T-\delta$ from the terminal condition $H^\sharp_{T-\delta}$, we can apply directly \cite[Proposition 5.24]{pard:rasc:14} to obtain a unique solution $(H^\sharp,Z^{H^\sharp})$ on $[0,T-\delta]$ such that $H^\sharp$ is bounded. 
\end{proof}

\subsection{Non concave case}

The assumptions \ref{C1_concave} and \ref{C3_tech} are supposed to ensure the boundedness of the coefficient $\kappa^2$ in the generator $F^H$. To avoid this difficulty, we can modify the relation \eqref{eq:asymp_beha_Y}  and consider \eqref{eq:asymp_beha_general_case}, that is:
\begin{equation*} 
Y_t = \phi(A_t) - \phi' \left(\frac{T-t}{\etamax} \right)\widehat H_t =\phi(A_t) - \psius(t) \widehat H_t.
\end{equation*}
Again from Lemma \ref{lem:lower_bound}, we know that $Y_t \geq \phi(A_t)$, thus $\widehat H_t \geq 0$ a.s. And we deduce
\begin{equation} \label{eq:BSDE_H_general_case}
\widehat H_t  =  \int_t^T F^{\widehat H}(s,\widehat H_s) ds - \int_t^TZ^{\widehat H}_s dW_s,
\end{equation}
where
\begin{align*}
F^{\widehat H}(t,h) & =  \frac{1}{-\psius(t) \eta_t}\left[  f(\phi(A_t) - \psius(t) h) - f (\phi(A_t)) \right] \mathbf{1}_{h\geq 0}+\frac{\kappa^3_t}{T-t}h \\
& + \left[ \frac{\lambda_t}{-\psius(t)}  + \frac{\widehat\kappa^1_t}{2}\frac{(Z^A_t)^2}{A_t}\right] 
\end{align*}
with
$$\widehat\kappa^1_t = \frac{ \phi' \left( A_t\right)}{\psius(t)}\kappa^1_t \geq 0, \qquad  \kappa^3_t = - \frac{T-t}{\etamax\psius(t)}\phi^{(2)} \left( \frac{T-t}{\etamax} \right)\geq 0.$$
The estimates \eqref{eq:bound_A} together with Lemma \ref{lem:bounded_cond} lead to the boundedness of the coefficients $\widehat\kappa^1$ and $\kappa^3$.
\begin{Thm} \label{thm:one_one_correspondance}
Under Conditions {\rm \ref{A1}} to {\rm \ref{A3}} and {\rm \ref{C2_tech}}, 
the minimal solution $(Y,Z^Y)$ of the BSDE \eqref{eq:sing_BSDE} is given by \eqref{eq:asymp_beha_general_case}, where $(\widehat H,Z^{\widehat H})$ is the minimal solution of the BSDE \eqref{eq:BSDE_H_general_case}.
\end{Thm}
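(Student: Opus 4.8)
The plan is to exploit that, unlike the multiplier $\phi'(A_t)$ in \eqref{eq:asymp_beha_Y}, the factor $\psius(t)=\phi'\left(\frac{T-t}{\etamax}\right)$ in \eqref{eq:asymp_beha_general_case} is \emph{deterministic}. Starting from the minimal solution $(Y,Z^Y)$ of \eqref{eq:sing_BSDE}, which exists under {\rm \ref{A1}}--{\rm \ref{A3}} by Proposition \ref{prop:exist_min_sol}, I would set $\widehat H_t=(Y_t-\phi(A_t))/(-\psius(t))$ and run the same It\^o computation that led to \eqref{eq:dyn_H_sym_case}. Because $\psius$ carries no martingale part, the cross term $Z^A Z^{\widehat H}$ that produced the singular $z$-linear coefficient $\kappa^1 Z^A/A$ in the symmetric case simply does not appear; this is exactly why the generator $F^{\widehat H}$ in \eqref{eq:BSDE_H_general_case} has no $z$-dependence, and one reads off $Z^{\widehat H}_t=(\phi'(A_t)Z^A_t-Z^Y_t)/\psius(t)$, free of any $H_t Z^A_t$ term. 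This structural simplification is the conceptual heart of the result: it removes the need for the Girsanov/exponential-moment machinery behind {\rm \ref{H_diff}} and for the concavity conditions {\rm \ref{C3_tech}}--{\rm \ref{C4_tech}}, at the cost of losing comparison and hence uniqueness.

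Next I would verify that $(\widehat H,Z^{\widehat H})$ is a genuine solution in the sense of Definition \ref{def:sing_gene_sol}, following the scheme of Proposition \ref{prop:existence_sol_BSDE_sing_gen} but in the simpler $z$-free setting. Lemma \ref{lem:lower_bound} and the a priori bound \eqref{eq:a_priori_estim_Y} give $0\leq\widehat H_t\leq\vartheta(T-t)/(-\psius(t))$, so $\widehat H\geq 0$ and the indicator in $F^{\widehat H}$ is active. The $h$-free part $\frac{\lambda}{-\psius}+\frac{\widehat\kappa^1}{2}\frac{(Z^A)^2}{A}$ is treated as in Corollary \ref{lem:integrability_f_H_0}, using boundedness of $\widehat\kappa^1$ (from \eqref{eq:bound_A} and Lemma \ref{lem:bounded_cond}, i.e. {\rm \ref{C2_tech}}) together with \eqref{eq:estim_ZA_bis}. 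For the singular linear term $\frac{\kappa^3_t}{T-t}\widehat H_t$ I would insert the deterministic bound for $\widehat H$ and the formula $\kappa^3_t=-\frac{T-t}{\etamax\psius(t)}\phi^{(2)}\left(\frac{T-t}{\etamax}\right)$ to reduce integrability near $T$ to integrability at $0$ of $(\phi^{(2)}\phi)/(\phi')^2$, which is precisely the computation in Lemma \ref{lem:integrable_0_T} (the limit $y/(-f(y))\to 0$ from \cite[Section 178]{hardy:08} suffices, so {\rm \ref{C4_tech}} is not invoked). Since $\widehat H_t\lesssim(\phi/(-\phi'))((T-t)/\etamax)\to 0$ gives $\mathbb E[\widehat H_t]\to 0$, monotonicity of $f$ makes the $f$-difference term integrable, and the Burkholder--Davis--Gundy inequality then yields $Z^{\widehat H}\in\bH^1(0,T)$.

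To obtain the minimality correspondence I would use that $\widehat H\mapsto Y=\phi(A)-\psius\widehat H$ is affine and monotone increasing, as $-\psius>0$. For any other non-negative solution $(\widehat H',Z^{\widehat H'})$ of \eqref{eq:BSDE_H_general_case}, the process $Y'_t=\phi(A_t)-\psius(t)\widehat H'_t$ solves \eqref{eq:sing_BSDE} on $[0,T)$ by the reverse It\^o computation, lies in $\bS^p(0,T-)$, is non-negative (since $\phi(A)\geq 0$ and $-\psius\widehat H'\geq 0$), and satisfies $\lim_{t\to T}Y'_t=+\infty$ because $\widehat H'_T=0$ while $\phi(A_t)\to\phi(0)=+\infty$; thus $(Y',Z^{Y'})$ meets all four conditions of Definition \ref{def:sing_cond_sol}. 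Minimality of $Y$ forces $Y'\geq Y$, and dividing by $-\psius>0$ gives $\widehat H'\geq\widehat H$, so the $\widehat H$ built from the minimal $Y$ is the minimal solution of \eqref{eq:BSDE_H_general_case}; conversely, feeding this minimal $\widehat H$ through \eqref{eq:asymp_beha_general_case} returns the minimal $Y$, closing the correspondence.

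I expect the main obstacle to be the integrability up to $T$ of the singular linear term $\frac{\kappa^3}{T-t}\widehat H$ and the verification $Z^{\widehat H}\in\bH^1(0,T)$, the analogues of the delicate part of Proposition \ref{prop:existence_sol_BSDE_sing_gen}. Everything else is routine once it is observed that the deterministic choice of $\psius$ suppresses the singular $z$-term, so that none of {\rm \ref{C1_concave}}, {\rm \ref{C3_tech}}, {\rm \ref{C4_tech}}, {\rm \ref{H_diff}} is required and only the minimal-to-minimal matching (rather than full uniqueness) can be claimed.
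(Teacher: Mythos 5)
Your proposal is correct and follows essentially the same route as the paper: define $\widehat H$ from the minimal $Y$ via \eqref{eq:asymp_beha_general_case}, verify that it solves \eqref{eq:BSDE_H_general_case} by the (simpler, $z$-free) arguments of Proposition \ref{prop:existence_sol_BSDE_sing_gen}, and obtain minimality by mapping any competing solution $\widetilde H$ back to $\widetilde Y=\phi(A)-\psius\widetilde H$, a solution of \eqref{eq:sing_BSDE}, and invoking minimality of $Y$. The only slight difference is that you restrict the comparison to non-negative competitors, whereas the paper first shows (by adapting the first step of the proof of Proposition \ref{prop:sing_gene_uniqueness}) that every solution of \eqref{eq:BSDE_H_general_case} is automatically non-negative, so that minimality holds among all solutions.
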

\begin{proof}
Define $\widehat H$ on $[0,T)$ thanks to \eqref{eq:asymp_beha_general_case} and set $\widehat H_T =0$ a.s. 
The same arguments as for Proposition \ref{prop:existence_sol_BSDE_sing_gen} imply that $(\widehat H,Z^{\widehat H})$ is solution to the BSDE \eqref{eq:BSDE_H_general_case} (in the sense of Definition \ref{def:sing_gene_sol}). It is even easier since $F^{\widehat H}$ does not depend on $Z^{\widehat H}$. 
 
Assume that the BSDE  \eqref{eq:BSDE_H_general_case} has another solution $( \widetilde H,Z^{ \widetilde H})$. The first part of the proof of Proposition \ref{prop:sing_gene_uniqueness} with straightforward modifications shows that $ \widetilde H_t \geq 0$ a.s. We define $\widetilde Y_t = \phi(A_t) - \psius(t) \widetilde H_t$. Thus $(\widetilde Y,\widetilde Z)$ is a non-negative solution of the BSDE \eqref{eq:sing_BSDE}. Since $(Y,Z)$ is the minimal non-negative solution of \eqref{eq:sing_BSDE} (see Proposition \ref{prop:exist_min_sol}), we have a.s. 
$$\forall t \in [0,T], \quad \phi(A_t) \leq Y_t \leq \widetilde Y_t = \phi(A_t) -\psius(t) \widetilde H_t.$$
Thus $0\leq \widehat H_t  \leq \widetilde H_t$. In other words $\widehat H$ is the minimal solution of  \eqref{eq:BSDE_H_general_case}. 
\end{proof}

In the expansion \eqref{eq:asymp_beha_general_case} of $Y$, there is an asymmetry between $\phi(A_t)$ which is random, and the deterministic $\psius(t)$. This asymmetry has the advantage of avoiding the presence of $Z^H$ in the generator of $H$ and of an extra term with the third derivative of $\phi$. However it leads to the fact that 
 we cannot interpret the bracket
\begin{align*}
&\frac{\kappa^3_t}{T-t}h+\frac{1}{-\psius(t) \eta_t}[f(\phi(A_t)-\psius(t) h)-f(\phi(A_t))]\\
&=\frac{1}{-\psius(t) \eta_t}\left[f(\phi(A_t)-\psius(t) h)-f(\phi(A_t))+f'\left(\phi\left(\frac{T-t}{\etamax}\right)\right)\eta_t\psius(t)h\right]
\end{align*}
as the reminder to the first Taylor polynomial of $f$ at $\phi(A_t)$. And without the other hypotheses \ref{C1_concave}, \ref{C3_tech}, \ref{C4_tech} and \ref{H_diff}, we cannot prove uniqueness of the solution.

\section{Summary}

Let us summarize our results. We suppose that \ref{A1} to \ref{A3} hold. 
\begin{itemize}
\item Under the assumptions \ref{C1_concave} to \ref{C4_tech} and \ref{H_diff}, $Y$ can be developed as follows:
$$Y_t = \phi(A_t) - \phi' \left(A_t \right) H_t $$
where $H$ is the unique solution of the BSDE with singular generator \eqref{eq:BSDE_H}. Hence uniqueness of $Y$ holds. 


\item In the power case $f(y) = -y|y|^q$ and if $1/\eta$ is given by \eqref{eq:gamma_ito_proc}, we can use \eqref{eq:asymp_beha_power_case}:
$$Y_t=\left(  \dfrac{ \eta_t}{ q (T-t)} \right)^{1/q} +\frac{1}{(q(T-t))^{1+1/q}} H^{\sharp}_t,$$
where $H^\sharp$ solves the BSDE \eqref{eq:BSDE_H_power_case} .

\item If we only have \ref{C2_tech}, $Y$ and $\widehat H$ are related by \eqref{eq:asymp_beha_general_case}:
$$Y_t = \phi(A_t) - \phi' \left(\frac{T-t}{\etamax} \right)\widehat H_t$$
where $\widehat H$ is the minimal solution of the BSDE \eqref{eq:BSDE_H_general_case}. 
\end{itemize}
Remark that if $\eta$ or $1/\eta$ is an It\^o process, then using \eqref{eq:decomp_A_Ito_case}:
\begin{eqnarray*} 
\phi(A_t) & = & \phi \left(\gamma_t (T-t) + \bE \left[  \int_t^T (T-u) b^\gamma_u du  \bigg| \cF_t \right]\right) \\
& = & \phi \left(\gamma_t (T-t) \right) \\
& + & \bE \left[  \int_t^T (T-u) b^\gamma_u du  \bigg| \cF_t \right]  \int_0^1 \phi' \left(\gamma_t (T-t) + a \bE \left[  \int_t^T (T-u) b^\gamma_u du  \bigg| \cF_t \right]\right) da.
\end{eqnarray*}
From Condition \ref{A1}, \eqref{eq:increment_deriv_phi} and for a bounded process $b^\gamma$, we deduce that there exists a constants $C$ such that 
$$-\frac{1}{C} \phi'(T-t) \leq -\phi' \left(\gamma_t (T-t) + a \bE \left[  \int_t^T (T-u) b^\gamma_u du  \bigg| \cF_t \right]\right) \leq -C \phi'(T-t).$$
Thus 
$$\phi(A_t) =  \phi (\gamma_t (T-t))  -\phi'(T-t) \kappa_t (T-t)^2$$
with a bounded process $\kappa$. In other words, in the It\^o setting, all developments coincide: we can find some constant $C > 1$ such that a.s. for any $t \in [0,T]$
$$\frac{1}{C} \widehat H_t \leq  H_t \leq C \widehat H_t, \quad \frac{1}{C} H^\sharp_t \leq  H_t \leq C H^\sharp_t.$$ 

Moreover from the construction of $H^\sharp$, we know that $|H^\sharp_t | \leq C(T-t)^2$. Using our different asymptotics, the previous development of $\phi(A)$ and uniqueness of the (minimal) solution, we obtain that $H$, $\widetilde H$ and $\widehat H$ verify also this estimate, which is better than \eqref{eq:a_priori_estim_H}. 

\section{Appendix} \label{sect:appendix}

We add here some additional results. The first one concerns the process $Z^A$. Under \ref{H_diff}, it is a corollary of Lemma \ref{lem:expo_moment_1}. 
\begin{Lem}\label{lem:estim_F_0_bis}
For $p>2$, if $1/\eta$ is given by \eqref{eq:gamma_ito_proc} and if $b^\gamma$ and $\sigma^\gamma$ belong to $\bL^{2p}(\Omega\times [0,T])$, the process $Z^A/A$ belongs to $\bH^{2p}(0,T)$, that is
\begin{equation} \label{mbappe}
\bE \left[ \left(  \int_0^T  \left(\frac{Z^A_s}{A_s}\right)^{2}  ds \right)^{p} \right] < +\infty.
\end{equation}
\end{Lem}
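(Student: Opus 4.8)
The plan is to bound the denominator by means of the lower estimate in \eqref{eq:bound_A} and then to exploit the decomposition of $Z^A$ recorded just after \eqref{eq:decomp_A_Ito_case}. Since $A_s\geq (T-s)/\etamax$, it suffices to control $\bE[(\int_0^T (Z^A_s)^2/(T-s)^2\,ds)^p]$. Writing $Z^A_s=\sigma^\gamma_s(T-s)+Z^{\widetilde A}_s$ and using $(a+b)^2\le 2a^2+2b^2$, the problem splits into proving $\bE[(\int_0^T (\sigma^\gamma_s)^2\,ds)^p]<+\infty$ and $\bE[(\int_0^T (Z^{\widetilde A}_s)^2/(T-s)^2\,ds)^p]<+\infty$. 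The first follows immediately from $\sigma^\gamma\in\bL^{2p}$ together with Jensen's inequality, as $(\int_0^T(\sigma^\gamma_s)^2\,ds)^p\le T^{p-1}\int_0^T |\sigma^\gamma_s|^{2p}\,ds$.

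For the second estimate I would introduce the continuous martingale $\widetilde M_t=\int_0^t \frac{Z^{\widetilde A}_s}{T-s}\,dW_s$, whose bracket is precisely $\langle\widetilde M\rangle_T=\int_0^T (Z^{\widetilde A}_s)^2/(T-s)^2\,ds$. The It\^o computation already carried out in the proof of Lemma \ref{lem:expo_moment_1} gives $\widetilde M_t = -\frac{\widetilde A_t}{T-t}+\frac{\widetilde A_0}{T}+\int_0^t \frac{\widetilde A_s}{(T-s)^2}\,ds - \int_0^t b^\gamma_s\,ds$, so by the Burkholder--Davis--Gundy inequality $\bE[\langle\widetilde M\rangle_T^p]\le C_p\,\bE[\sup_{t<T}|\widetilde M_t|^{2p}]$, and it remains to bound the $2p$-th moment of each of the four terms. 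From $\widetilde A_t=\bE[\int_t^T (T-u)b^\gamma_u\,du\,|\,\cF_t]$ and $T-u\le T-t$ one obtains $|\widetilde A_t|/(T-t)\le \bE[\int_0^T|b^\gamma_u|\,du\,|\,\cF_t]=:P_t$; Doob's maximal inequality applied to the martingale $P$, together with $\bE[(\int_0^T|b^\gamma_u|\,du)^{2p}]\le T^{2p-1}\bE\int_0^T|b^\gamma_u|^{2p}\,du<+\infty$, then controls both $\sup_{t<T}|\widetilde A_t|/(T-t)$ and $|\widetilde A_0|/T$, and the same moment bound handles $\int_0^T|b^\gamma_s|\,ds$.

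The main obstacle is the remaining drift term $\int_0^T \frac{|\widetilde A_s|}{(T-s)^2}\,ds$, whose integrand is genuinely singular at $s=T$ when $b^\gamma$ lies only in $\bL^{2p}$, so that the pathwise bound $|\widetilde A_s|\le C(T-s)^2$ available in Lemma \ref{lem:expo_moment_1} is lost. I would handle it by Minkowski's integral inequality in $L^{2p}(\Omega)$ followed by Fubini: since $\|\widetilde A_s\|_{2p}\le \int_s^T (T-u)\|b^\gamma_u\|_{2p}\,du$, it follows that $\bigl\|\int_0^T \frac{|\widetilde A_s|}{(T-s)^2}\,ds\bigr\|_{2p}\le \int_0^T (T-u)\|b^\gamma_u\|_{2p}\bigl(\int_0^u \frac{ds}{(T-s)^2}\bigr)\,du$, and the crucial cancellation $(T-u)\int_0^u (T-s)^{-2}\,ds = u/T\le 1$ exactly kills the singularity, leaving $\int_0^T \|b^\gamma_u\|_{2p}\,du\le T^{1-1/(2p)}(\bE\int_0^T|b^\gamma_u|^{2p}\,du)^{1/(2p)}<+\infty$ by H\"older. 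Assembling the four bounds gives $\bE[\langle\widetilde M\rangle_{T-\eps}^p]\le C$ uniformly in $\eps$, and letting $\eps\to 0$ by monotone convergence finishes the argument. I would run all estimates first on $[0,T-\eps]$, where $A$ is bounded away from zero and $Z^A\in\bH^{2p}$ ensures the finiteness needed to legitimize the manipulations.
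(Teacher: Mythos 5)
Your proof is correct, but it follows a genuinely different route from the paper's. The paper introduces $\bar A_t=A_t/(T-t)$, observes that it solves a BSDE with driver $(1/\eta_t-\bar A_t)/(T-t)$ and terminal value $1/\eta_T$, reduces \eqref{mbappe} to showing this driver lies in $\bL^{2p}(\Omega\times[0,T])$ via the standard a priori estimate $Z^{\bar A}\in\bH^{2p}$ for BSDEs with $L^{2p}$ data, and then controls the driver by Kolmogorov's continuity criterion: $\gamma=1/\eta$ is a.s. H\"older of order $\alpha\in(0,\tfrac{p-2}{2p})$ with an $L^{2p}(\Omega)$ H\"older constant $\xi$, so $|1/\eta_t-\bar A_t|\lesssim \xi\,(T-t)^\alpha$ absorbs the $1/(T-t)$ singularity. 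You instead work directly on the martingale $\widetilde M=\int_0^\cdot \frac{Z^{\widetilde A}_s}{T-s}\,dW_s$ after splitting off the bounded-in-$\bL^{2p}$ piece $\sigma^\gamma(T-\cdot)$, estimate its bracket by BDG through the explicit four-term representation coming from the It\^o dynamics of $\widetilde A_t/(T-t)$, and neutralize the only singular term via Minkowski's integral inequality, Fubini, and the exact cancellation $(T-u)\int_0^u(T-s)^{-2}\,ds=u/T\le 1$. Each step (Doob for $\sup_t|\widetilde A_t|/(T-t)$, Jensen for the $\sigma^\gamma$ term, the localization on $[0,T-\eps]$ followed by monotone convergence) is sound. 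What your argument buys is that it is more elementary --- no Kolmogorov criterion and no appeal to BSDE a priori estimates --- and it does not actually use $p>2$ anywhere, so it would give the conclusion for all $p\ge 1$; what the paper's argument buys is brevity, since once the H\"older bound on $\gamma$ is in hand the driver estimate is a one-line computation.
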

\begin{proof} We consider the process $\bar A_t:=A_t/(T-t)$, which satisfies the BSDE
\[
-d\bar A_t=\frac{1/\eta_t-\bar A_t}{T-t}\,dt-Z^{\bar A}\,dW_t, \qquad \bar A_T=1/\eta_T.
\]
Since $Z^{\bar A}_t=Z^{ A}_t/(T-t)$, to verify \eqref{mbappe} it is sufficient to establish $Z^{\bar A}\in \bH^{2p}(0,T)$. For the later again it is sufficient to establish that the driver to $\bar A$ is in $L^{2p}$. (Here we used frequently that $\eta$ is bounded above and away from zero.)

To establish $(1/\eta-\bar A)(T-\cdot)\in L^{2p}$ we first check Kolgomorov's criterion for $1/\eta$: For $0\leq t\leq s\leq T$, by Jensen and BDG inequality,
\[
\mathbb E[|1/\eta_s-1/\eta_t|^{2p}]\leq C|s-t|^{p-1}\mathbb E\left[\int_t^s \left( |b^\gamma_r|^{2p}+|\sigma^\gamma_r|^{2p} \right) \,dr\right].
\]
Hence, by Kolgomorov's criterion, for any $\alpha\in(0,\frac{p-2}{2p})$ there exits a random variable $\xi\in L^{2p}(\Omega)$ such that
\[
|1/\eta_t-1/\eta_s|\leq \xi|t-s|^{\alpha}, \qquad t,s\in[0,T].
\]
Therefore, using the mean value theorem,
\[
\mathbb E\left[\left( \int_0^T\left|\frac{1/\eta_t-\bar A_t}{T-t}\right|dt \right)^{2p}\,\right]\leq\mathbb E\left[\left(\int_0^T\frac{\xi(T-t)^\alpha}{T-t}dt \right)^{2p}\,\right]\leq C \mathbb E[\xi^{2p}],
\]
which completes the proof.
\end{proof}

The coefficient in the linear part $h\mapsto L(t,h,z)$ of the BSDE \eqref{eq:BSDE_H} is in $\bH^{2p}(0,T)$. However it is not sufficient to control $z \mapsto L(t,h,z)$. 

\subsection{Non-negativity of $\lambda$} \label{ssect:sign_lambda}

From the comparison principle for monotone BSDE (see \cite[Proposition 5.34]{pard:rasc:14}), any solution of \eqref{eq:sing_BSDE} with a non-negative terminal condition is bounded from below by the solution $(\bar Y, \bar Z)$ of the BSDE with generator 
$$f_\star(\omega,t,y) = \dfrac{1}{\eta_t(\omega)} (f(y)-f(0)) - (f(0)+\lambda_t(\omega))^-$$
and terminal condition 0. $\bar Y$ is non-positive and if $\lambda$ is bounded, $\bar Y$ is also bounded. Thus the negative part of $Y$ is bounded and we can consider only the positive part of the solution. 

If the sign of $\lambda$ is unknown, then Lemma \ref{lem:lower_bound} does not hold. However the minimal solution of \eqref{eq:sing_BSDE} is bounded from below by the minimal solution $(Y_\star,Z^{Y_\star})$ of the BSDE with generator $f_\star$ and terminal condition $+\infty$. And we can adapt the proof of Lemma \ref{lem:gene_a_priori_estim} in order to prove that there exist two functions $\vartheta_\star$ and $\vartheta^\star=\vartheta$ such that:
$$\vartheta_\star(T-t) \leq (Y_\star)_t \leq Y_t \leq \vartheta^\star(T-t),$$
where $\vartheta_\star$ is the solution of the ODE:
$$y' = \lambda_\star - \frac{f(y)}{\etamin}$$
with $\lambda_\star  \leq f(0)+\lambda_t(\omega) \leq \lambdamax$ and $\vartheta_\star(0) = +\infty$. Arguing as in the proof of Lemma \ref{lem:control_vartheta} we get that for any $0\leq \eps < 1$, on some deterministic and non-empty interval $[T^\eps,T]$, a.s.
$$\phi \left( \frac{T-t}{(1-\eps)\etamin}\right) \leq Y_t \leq \phi \left( \frac{T-t}{(1+\eps)\etamax}\right) .$$

\subsection{Construction of the process $H$} \label{ssect:constr_H}

Our aim is to prove that the BSDE \eqref{eq:BSDE_H} has a solution $(H,Z^H)$, without using the existence of $Y$. In other words we want to construct $(H,Z^H)$ from scratch. 
Recall that the generator $F^H$ is given by $F^H = F+ L$ with \eqref{eq:gene_F} and \eqref{eq:gene_L}
\begin{eqnarray*}
F(t,h) &= &\frac{-1}{\eta_t \phi'(A_t)} \left[ f(\phi(A_t)-\phi'(A_t) h) - f(\phi(A_t)) +f'(\phi(A_t)) \phi'(A_t) h \right]  \mathbf 1_{h\geq 0}, \\ 
L(t,h,z) & = &- \frac{\lambda_t}{\phi'(A_t)} + \frac{\kappa^1_t}{2} \frac{(Z^A_t)^2}{A_t} + \frac{ \kappa^1_t \kappa^2_t}{2}  \left( \frac{Z^A_t}{A_t} \right)^2 h + \kappa^1_t \frac{Z^A_t}{A_t} z \\
& =& \varpi_t + \frac{ \kappa^1_t \kappa^2_t}{2}  \left( \frac{Z^A_t}{A_t} \right)^2 h + \kappa^1_t \frac{Z^A_t}{A_t} z.
\end{eqnarray*}
The main properties of $F^H$ are summarized at the beginning of Section \ref{ssect:generator_f_H}. 
Note that we cannot apply directly the results of \cite{pard:rasc:14}, due to the singularity of $F^H$ and the cases of BSDEs with singular generator studied in \cite{jean:mast:poss:15,jean:reve:14} are also not adapted to our problem.

Here again, we assume that the hypotheses \ref{A1} to \ref{A3}, \ref{C1_concave} to \ref{C4_tech} and \ref{H_diff} hold. Recall that from Corollary \ref{lem:integrability_f_H_0}, $\varpi=F^H(\cdot,0,0)$ belongs to $\bL^p(\Omega \times [0,T])$ for any $p>1$. From Lemmata \ref{lem:bounded_cond} and \ref{lem:bound_coef_kappa_2}, $\kappa^1$ and $\kappa^2$ are non-negative and bounded processes. Finally from Lemma \ref{lem:expo_moment_1} and the Novikov's criterion, the martingale defined by \eqref{eq:loc_mart}
\begin{equation*} 
\cE(Z^A)_t = \exp \left( \int_{\widehat T}^t \kappa^1_s \frac{Z^A_s}{A_s} dW_s + \frac{1}{2} \int_{\widehat T}^t (\kappa^1_s)^2 \left( \frac{Z^A_s}{A_s} \right)^2 ds \right),\qquad t \in [\widehat T,T],
\end{equation*}
is uniformly integrable. Using Girsanov's theorem there exists a probability measure $\bQ$ equivalent to $\bP$ such that  $W^\bQ=W- \int \kappa^1_s \frac{Z^A_s}{A_s} ds $ is a Brownian motion under $\bQ$. 
Under $\bQ$, the BSDE \eqref{eq:BSDE_H} becomes
$$H_t = \int_t^T \widetilde{ F^H}(s,H_s)ds- \int_t^T Z^H_s dW^\bQ_s,$$
with
$$\widetilde{ F^H}(t,h)= F(t,h) + \varpi_t + \frac{ \kappa^1_t \kappa^2_t}{2}  \left( \frac{Z^A_t}{A_t} \right)^2 h .$$
To lighten the notations, the generator and the Brownian motion under $\bQ$ are still denoted $F^H$ and $W$. All expectations are considered under $\bQ$. 
A straightforward modification of the proof of Lemma \ref{lem:expo_moment_1} shows that on some deterministic interval $[\widehat T,T]$, there exists $q>1$ such that 
\begin{equation} \label{eq:exponen_moment_Z^A_bis}
\bE \left[ \exp \left( q \int_{\widehat T}^T \dfrac{ \kappa^1_s \kappa^2_s}{2} \left( \frac{Z^A_s}{A_s} \right)^2 ds \right) \right] < +\infty.
\end{equation}

In order to construct the solution, we modify the function $F$. Let us consider $\delta > 0$ and $\eps > 0$ and define
\begin{eqnarray} \label{eq:def_F_delta_eps}
F^{\delta,\eps}(t,h)& = &\frac{-1}{\eta_t \phi'(A_t)} \left[ f(\phi(A_t)-\phi'(A^\delta_t) h) - f(\phi(A_t)) \right]  \mathbf 1_{h\geq 0} +  \frac{\kappa^1_t}{\eta_t A^\eps_t}  h 1_{h\geq 0} 
\end{eqnarray}
with $A^\delta_t = A_{t-\delta}.$
We consider the following BSDE 
\begin{equation} \label{eq:approx_H_BSDE}
H_t = \int_t^T f^{\delta,\eps}(s,H_s) ds - \int_t^T Z^H_s dW_s 
\end{equation}
on the interval $[0,T]$ with 
$$f^{\delta,\eps}(t,h)=   F^{\delta,\eps}(t,h) + \varpi_t +\dfrac{ \kappa^1_t \kappa^2_t}{2}  \left( \dfrac{Z^A_t}{A_t} \right)^2h.$$

We introduce an additional condition on $f$:
\begin{enumerate}[label=\textbf{(C\arabic*)}]
\setcounter{enumi}{4}
\item \label{C5_inte} {\it There exists some $0 \leq \tau < T$ such that for any $r \geq 0$
$$\int_\tau^T \frac{-f \left( \phi(\frac{T-t}{\etamax})+ r\right)}{-\phi'(\frac{T-t}{\etamax})} dt < +\infty.$$
}
\end{enumerate}
Note that the integrand is non-negative. 
From \eqref{eq:bound_A}, since the function $\phi$ is non-increasing, \ref{C5_inte} implies that for any $r \geq 0$ and any $p\geq 1$
$$\bE \left[ \left( \int_\tau^T \frac{f(\phi(A_t) + r)}{\phi'(A_t)} dt \right)^p \right]< +\infty.$$
This hypothesis \ref{C5_inte} depends only on the behavior of $f$ on a neighborhood of $+\infty$. In particular if the function $-f$ is submultiplicative: $-f(x+y) \leq C(-f(x))(-f(y))$ for some fixed constant $C$, then 
$$ \frac{-f \left( \phi(\frac{T-t}{\etamax})+ r\right)}{-\phi'(\frac{T-t}{\etamax})} \leq C (-f( r))  \frac{-f \left( \phi(\frac{T-t}{\etamax})\right)}{-\phi'(\frac{T-t}{\etamax})}  \leq C(-f(r)) .$$
\begin{Rem}[Comments on \ref{C5_inte}] \label{rem:comment_C2}
In the section \ref{ssect:examples}, all functions are submultiplicative (and thus {\rm \ref{C5_inte}} holds), except $f(y) = -\exp(ay^2)$. Nevertheless for this case
\begin{eqnarray*}
\frac{-f(\phi(\frac{T-t}{\etamax}) + r)}{-\phi'(\frac{T-t}{\etamax})} & \leq & C\exp(ar^2) \exp (2ar \phi(A_t)) = \exp(ar^2) \exp(2arG^{-1}(\dfrac{T-t}{\etamax})).
\end{eqnarray*}
And using \eqref{eq:bound_A}
\begin{eqnarray*}
\int_\tau^T \frac{-f(\phi(\frac{T-t}{\etamax}) + r)}{-\phi'(\frac{T-t}{\etamax})} dt 
& \leq & \exp(ar^2) \int_\tau^T \exp \left( 2arG^{-1} \left(\dfrac{T-t}{\etamax}\right) \right) dt \\
& \leq &  \etamax \exp(ar^2)\int_{\zeta}^\infty \exp(2arz)\exp(-az^2) dz < +\infty.
\end{eqnarray*}
Thereby {\rm \ref{C5_inte}} holds also in this case. 
\end{Rem}

\begin{Lem} \label{lem:existence_H_delta_eps}
Assume that {\rm \ref{C5_inte}} holds. 
Define 
$$\mu^\eps_t  = \int_\tau^t \left[ \frac{\kappa^1_s}{\eta_s A^\eps_s} + \dfrac{ \kappa^1_t \kappa^2_t}{2}  \left( \dfrac{Z^A_t}{A_t} \right)^2\right] ds.$$
Then there exists a unique solution $(H^{\delta,\eps},Z^{H,\delta,\eps})$ to the BSDE \eqref{eq:approx_H_BSDE} such that  a.s. for all $t\in[\tau,T]$
$$|H^{\delta,\eps}_t | \leq \bE \left[ \int_t^T e^{\mu^\eps_s-\mu^\eps_t} |\varpi| ds \bigg| \cF_t \right],$$
and for any $p \in (0,q)$
$$ \bE \left[ \sup_{s\in [\tau,T]} \left| e^{\mu^\eps_s} H^{\delta,\eps}_s \right|^p + \left(\int_\tau^T e^{2\mu^\eps_s} |Z^{H,\delta,\eps}_s|^2 ds \right)^{p/2}  \right]  \leq C_q   \bE \left[ \left(  \int_\tau^T e^{\mu^\eps_s} |\varpi_s| ds \right)^p \right].$$
Finally a.s. for any $t \in [\tau,T]$, $H^{\delta,\eps}_t \geq 0$.
\end{Lem}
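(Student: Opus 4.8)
The plan is to exploit that, because of the time shifts $A^\delta_t=A_{t-\delta}$ and $A^\eps_t=A_{t-\eps}$ built into \eqref{eq:def_F_delta_eps}, the only genuine singularity left in the generator $f^{\delta,\eps}$ is the linear-in-$h$ term with coefficient $b_t:=\frac{\kappa^1_t\kappa^2_t}{2}\left(\frac{Z^A_t}{A_t}\right)^2$. Indeed, by \eqref{eq:bound_A} the shifted quantities $A^\eps$ and $A^\delta$ stay bounded away from zero on $[\tau,T]$, so $\frac{\kappa^1}{\eta A^\eps}$ and $\phi'(A^\delta)$ are bounded there; thus $f^{\delta,\eps}(t,h)=G(t,h)+b_t h+\varpi_t$, where $G:=F^{\delta,\eps}$ is continuous, vanishes at $h=0$, is monotone in $h$ with a bounded monotonicity constant (the nonlinear part is non-increasing since $f$ is and $-\phi'(A^\delta)\geq 0$, while the linear piece $\frac{\kappa^1}{\eta A^\eps}h\mathbf 1_{h\geq 0}$ is bounded and increasing), while $\varpi=F^H(\cdot,0,0)\geq 0$ lies in $\bL^p(\Omega\times[0,T])$ for every $p$ and, working under $\bQ$, the generator no longer depends on $z$. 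The coefficient $b\geq 0$ is merely exponentially integrable via \eqref{eq:exponen_moment_Z^A_bis}; this is the single obstacle, and the origin of the restriction $p<q$.

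For the a priori bounds I would use non-negativity of the solution together with the sign of the nonlinear part: for $h=H^{\delta,\eps}_t\geq 0$ the nonlinear contribution to $f^{\delta,\eps}$ is $\leq 0$, so $f^{\delta,\eps}(t,H^{\delta,\eps}_t)\leq \dot\mu^\eps_t H^{\delta,\eps}_t+\varpi_t$ with $\dot\mu^\eps_t=\frac{\kappa^1_t}{\eta_t A^\eps_t}+b_t$. Comparison with the linear BSDE driven by $\dot\mu^\eps_t h+\varpi_t$ (whose explicit solution is exactly the stated conditional expectation, cf.\ \cite{pard:rasc:14}) yields the pointwise bound $|H^{\delta,\eps}_t|\leq \bE\left[\int_t^T e^{\mu^\eps_s-\mu^\eps_t}|\varpi_s|\,ds\,\Big|\,\cF_t\right]$. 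The $\bL^p$ estimate then follows by applying It\^o's formula to $e^{p\mu^\eps_t}|H^{\delta,\eps}_t|^p$, discarding the non-positive nonlinear term, and closing with the Burkholder--Davis--Gundy inequality; here \eqref{eq:exponen_moment_Z^A_bis} guarantees $e^{\mu^\eps}\in\bL^r$ for $r$ large enough that H\"older's inequality closes the bound precisely for $p<q$.

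Existence is the hard part, and I would obtain it by truncating the singular coefficient: replace $b$ by $b^n:=b\wedge n$, so that $G(t,h)+b^n_t h+\varpi_t$ has bounded monotonicity constant and $\bL^p$ data, with Condition \ref{C5_inte} (through the remark following it) supplying the integrability of the nonlinear part $G(\cdot,H^n)$ needed to place the generator in $\bL^1$; each such monotone BSDE with terminal $0$ has a unique solution $(H^n,Z^n)$ by \cite{pard:rasc:14}. Each $H^n$ is non-negative: applying It\^o--Tanaka to $(H^n)^-$ and noting that on $\{H^n<0\}$ both the nonlinear part and $\frac{\kappa^1}{\eta A^\eps}h\mathbf 1_{h\geq 0}$ vanish, one gets, using $\varpi\geq 0$,
\[
(H^n_t)^-\leq \int_t^T b^n_s (H^n_s)^-\,ds+\int_t^T \mathbf 1_{H^n_s<0}\,Z^n_s\,dW_s,
\]
and the bounded integrating factor $e^{\int_\tau^\cdot b^n_s\,ds}$ forces $(H^n)^-=0$. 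Since $b^n\uparrow b$ and all $H^n\geq 0$, the comparison principle gives $H^n\uparrow$, while the a priori bounds above depend only on $b^n\leq b$ and hence hold uniformly in $n$; monotone convergence together with the uniform weighted $\bL^p$ control on $(H^n,Z^n)$ and dominated convergence (again via \ref{C5_inte}) produce a limit $(H^{\delta,\eps},Z^{H,\delta,\eps})$ solving \eqref{eq:approx_H_BSDE}, satisfying the two estimates and inheriting $H^{\delta,\eps}\geq 0$.

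Finally, uniqueness follows the pattern of Proposition \ref{prop:sing_gene_uniqueness}: for two solutions the difference $\Delta H$ satisfies, by monotonicity of $G$ (with bounded constant) and linearity of the $b_t h$ term, a linear backward inequality whose drift coefficient is $\frac{\kappa^1}{\eta A^\eps}+b$; the integrating factor $e^{\mu^\eps}$, made a genuine martingale weight by \eqref{eq:exponen_moment_Z^A_bis}, reduces $(\Delta H)^+$ to $0$ on $[\tau,T]$. The main difficulty throughout is precisely that $b$ is unbounded with only exponential moments; every place where one wants to run a Gronwall or comparison argument against $b$ must be justified through the integrating factor $e^{\mu^\eps}$ and the exponential moment \eqref{eq:exponen_moment_Z^A_bis}, which is exactly why the estimates are limited to $p<q$.
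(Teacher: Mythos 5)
Your proposal is correct in substance but takes a genuinely different route from the paper on the existence part. The paper's proof is short: it verifies the hypotheses of \cite[Proposition 5.24]{pard:rasc:14} directly, namely (i) the one-sided monotonicity bound $(h-h')(f^{\delta,\eps}(t,h)-f^{\delta,\eps}(t,h'))\leq \dot\mu^\eps_t(h-h')^2$, and (ii) the local growth bound $|f^{\delta,\eps}(t,h)|\leq \Phi^\sharp_r(t)$ for $|h|\leq r$, where $\Phi^\sharp_r\in\bL^p(\Omega\times[0,T])$ by \ref{C5_inte}, so that the weighted quantity $\bE\bigl[\bigl(\int_\tau^T e^{\mu^\eps_s}\Phi^\sharp_r(s)\,ds\bigr)^p\bigr]$ is finite for $p<q$ via H\"older and \eqref{eq:exponen_moment_Z^A_bis}; the cited proposition then delivers existence, uniqueness and both estimates at once, and non-negativity follows from the comparison principle \cite[Proposition 5.34]{pard:rasc:14} since $f^{\delta,\eps}(\cdot,0)=\varpi\geq0$. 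You instead reprove the existence half of that proposition by hand: truncating the singular linear coefficient $b=\frac{\kappa^1\kappa^2}{2}(Z^A/A)^2$ at level $n$, solving the resulting standard monotone BSDEs, and passing to a monotone limit. Your identification of where $p<q$ enters (the exponential moment of $\mu^\eps_T$ against the $\bL^p$ data, closed by H\"older) and your derivation of the pointwise bound by comparison with the linear BSDE are exactly the mechanisms the paper relies on, just unpacked.

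One soft spot in your construction: in the limit $n\to\infty$ you invoke dominated convergence for the generator, but there is no obvious deterministic dominating function for the nonlinear term $G(\cdot,H^n)$ near $T$, since the a priori bound on $H^n$ is only a conditional expectation, not a deterministic envelope. The clean fix is to observe that both singular contributions are monotone in $n$: $b^n_sH^n_s$ is a product of non-negative nondecreasing sequences, and the nonlinear term $\frac{-1}{\eta\phi'(A)}[f(\phi(A)-\phi'(A^\delta)H^n)-f(\phi(A))]$ is non-negative and nondecreasing in $H^n$ because $f$ is non-increasing; monotone convergence then passes the equation to the limit, with integrability of the limiting generator read off from the equation itself. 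With that adjustment your argument is complete; it is longer than the paper's but more self-contained, which is in the spirit of what the appendix is trying to achieve anyway.
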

\begin{proof}
Let us check that all conditions of \cite[Proposition 5.24]{pard:rasc:14} hold (we keep also the same notations). First we have for all $(h,h')$
$$(h-h')(f^{\delta,\eps}(t,h)-f^{\delta,\eps}(t,h')) \leq\left[ \frac{\kappa^1_t}{\eta_t A^\eps_t} + \dfrac{ \kappa^1_t \kappa^2_t}{2}  \left( \dfrac{Z^A_t}{A_t} \right)^2 \right] (h-h')^2$$
Moreover if $|h|\leq r$
\begin{eqnarray*}
|f^{\delta,\eps}(t,h)| & \leq & |\varpi_t| + \left[ \frac{\kappa^1_t}{\eta_t A^\eps_t} + \dfrac{ \kappa^1_t \kappa^2_t}{2}  \left( \dfrac{Z^A_t}{A_t} \right)^2 \right]  r - \frac{f(\phi(A_t)-\phi'(A^\delta_t) r)}{- \eta_t \phi'(A_t)} \\
& \leq & |\varpi_t| + \left[ \frac{\kappa^1_t}{\eta_t A^\eps_t} + \dfrac{ \kappa^1_t \kappa^2_t}{2}  \left( \dfrac{Z^A_t}{A_t} \right)^2\right]  r - \frac{f(\phi(A_t)-\phi'(\delta/\etamax)  r)}{- \eta_t \phi'(A_t)}  =  \Phi^\sharp_r(t).
\end{eqnarray*}
Note that $\frac{\kappa^1}{\eta A^\eps}$ is bounded on $[0,T]$ and that $\varpi$ belongs to $\bL^p(\Omega\times [0,T])$ for any $p > 1$.
From Condition \ref{C5_inte}, using Estimate \eqref{eq:exponen_moment_Z^A_bis}, we deduce that $\Phi^\sharp_r$ also belongs to $\bL^p(\Omega\times [0,T])$ for any $p > 1$. Hence for $p<q$, using H\"older's inequality
\begin{align*}
&\bE \left[ \left( \int_\tau^T e^{\mu^\eps_s}  \Phi^\sharp_r(s) ds \right)^p \right]  \leq \bE \left[e^{p\mu^\eps_T}  \left( \int_\tau^T  \Phi^\sharp_r(s) ds \right)^p \right] \\
& \qquad \leq\left(  \bE \left[e^{q\mu^\eps_T} \right]\right)^{\frac{p}{q}} \left( \bE \left[ \left( \int_\tau^T  \Phi^\sharp_r(s) ds \right)^{\frac{pq}{q-p}} \right]\right)^{\frac{q-p}{q}} < +\infty.
\end{align*}
Using \cite[Proposition 5.24]{pard:rasc:14}, we deduce that there exists a unique solution $(H^{\delta,\eps},Z^{H,\delta,\eps})$ satisfying the desired estimate. The non-negativity of $H^{\delta,\eps}$ is obtained by comparison principle (\cite[Proposition 5.34]{pard:rasc:14}) since $F^H(\cdot,0) \geq 0$ a.s. 
\end{proof}

\vspace{0.5cm}
Let us begin with an a priori estimate on $H^{\delta,\eps}$. Recall that the function $\vartheta$ is defined just before Lemma \ref{lem:gene_a_priori_estim}.
\begin{Lem} \label{lem:a_priori_estim_H}
For all $t \in [\tau,T)$,
\begin{equation} \label{eq:a_priori_estimate_H}
0 \leq H^{\delta,\eps}_t \leq \frac{\vartheta(T-t)}{-\phi'(A_t)}.
\end{equation}
In particular $H^{\delta,\eps}$ is bounded on any interval $[\tau,T-\theta]$, $0 < \theta < T-\tau$.
\end{Lem}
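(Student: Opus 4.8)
The plan is to undo the change of variables that produced the regularized equation \eqref{eq:approx_H_BSDE} and to reduce the claim to the ODE comparison already established for $\vartheta$ in Lemmata \ref{lem:gene_a_priori_estim} and \ref{lem:control_vartheta}. The lower bound $H^{\delta,\eps}_t \geq 0$ is already contained in Lemma \ref{lem:existence_H_delta_eps}, so only the upper estimate requires work. I set $U^{\delta,\eps}_t = \phi(A_t)-\phi'(A_t)H^{\delta,\eps}_t$ and note that, since $-\phi'(A_t)>0$, the asserted bound is equivalent to $U^{\delta,\eps}_t \leq \phi(A_t)+\vartheta(T-t)$; as $\phi(A_t)\geq 0$ it suffices to prove the sharper inequality $U^{\delta,\eps}_t \leq \vartheta(T-t)$ and then drop the non-negative term $\phi(A_t)$ at the end.

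The core step is to apply It\^o's formula to $U^{\delta,\eps}$, using $-dA_t=\eta_t^{-1}dt+Z^A_t\,dW_t$ and $\phi'=f\circ\phi$, that is to run the heuristic derivation of \eqref{eq:dyn_H_sym_case}--\eqref{eq:dyn_BSDE_H} backwards (working under $\bP$, so that the linear-in-$z$ coefficient $\kappa^1 Z^A/A$ reappears in the drift once we return from $W^\bQ$ to $W$). The feature I rely on is that all the $Z^A$-quadratic and $Z^AZ^H$ cross contributions cancel: the second order terms $\tfrac12\phi''(A)(Z^A)^2$ and $\tfrac12\phi^{(3)}(A)(Z^A)^2H$ produced by the two It\^o expansions are matched by the terms $\tfrac{\kappa^1}{2}(Z^A)^2/A$ and $\tfrac{\kappa^1\kappa^2}{2}(Z^A/A)^2h$ hidden in $\varpi$ and in $f^{\delta,\eps}$, while the reintroduced $\kappa^1 Z^A/A$ coefficient absorbs the cross term $-\phi''(A)Z^AZ^H$. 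After these cancellations I expect the drift of $U^{\delta,\eps}$ to collapse to the purely first-order expression
\[
-\frac{1}{\eta_t} f\!\left(\phi(A_t)-\phi'(A^\delta_t)H^{\delta,\eps}_t\right)-\lambda_t+\frac{\phi''(A_t)}{\eta_t}H^{\delta,\eps}_t\left(1-\frac{A_t}{A^\eps_t}\right),
\]
which carries no surviving martingale integrand of $A$ and is therefore directly comparable to a deterministic supersolution.

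With this dynamics in hand I would argue exactly as in Lemma \ref{lem:gene_a_priori_estim}: shift the singularity by fixing $0<\theta<T-\tau$, regard $\vartheta(T-\theta-\cdot)$ as the deterministic solution of the ODE \eqref{eq:upper_ODE}, and apply the comparison principle for monotone BSDEs on $[\tau,T-\theta]$ (monotonicity in the unknown being guaranteed by $f$ non-increasing). Since $\vartheta(0)=+\infty$, the terminal comparison at $T-\theta$ is automatic, and letting $\theta\downarrow 0$ yields $U^{\delta,\eps}_t\leq\vartheta(T-t)$ on $[\tau,T)$, hence \eqref{eq:a_priori_estimate_H}. The final boundedness on any $[\tau,T-\theta]$ is then immediate from \eqref{eq:bound_A}: there $\theta/\etamax\leq A_t\leq (T-\tau)/\etamin$, so $\vartheta(T-t)\leq\vartheta(\theta)<+\infty$ and $-\phi'(A_t)\geq -\phi'((T-\tau)/\etamin)>0$, whence $\vartheta(T-t)/(-\phi'(A_t))$ stays bounded.

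The step I expect to be the main obstacle is controlling the two residual terms created by the regularizing shifts $A^\delta_t=A_{t-\delta}$ and $A^\eps_t$, namely the difference $f(\phi(A)-\phi'(A)H)-f(\phi(A)-\phi'(A^\delta)H)$ that separates the displayed generator from the genuine $Y$-generator $-\tfrac1\eta f(U)-\lambda$, and the factor $1-A/A^\eps$. When $\delta=\eps=0$ these vanish and $U^{\delta,\eps}$ solves \eqref{eq:sing_BSDE} exactly, so the comparison with $\vartheta$ is precisely Lemma \ref{lem:gene_a_priori_estim}; for $\delta,\eps>0$ I must verify that, after majorizing $\eta$ by $\etamax$ and $\lambda$ by $\lambdamax$, the shift terms do not raise the effective generator of $U^{\delta,\eps}$ above the $\vartheta$-generator $\lambdamax+f(\cdot)/\etamax$. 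This is where I would exploit the monotonicity of $f$, the convexity of $\phi$ (so that $\phi'$ is increasing and $\phi''\geq 0$), and the two-sided estimate \eqref{eq:bound_A} comparing $A^\delta_t$ and $A^\eps_t$ with $A_t$, so as to keep each correction on the favourable side of the comparison inequality.
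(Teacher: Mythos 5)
Your strategy is the same as the paper's. The proof of Lemma \ref{lem:a_priori_estim_H} sets $Y^{\delta,\eps}_t=\phi(A_t)-\phi'(A_t)H^{\delta,\eps}_t$, runs It\^o's formula, observes exactly the cancellations you describe (the $(Z^A)^2$ and $(Z^A)^2H$ contributions against the corresponding pieces of $\varpi$ and of the linear coefficient, the cross term against the reappearing $\kappa^1 Z^A/A$ coefficient), arrives at the same collapsed drift $\frac{1}{\eta}f(\cdot)+\lambda-\phi''(A)\frac{H}{\eta}(1-A/A^\eps)$, puts $Y^{\delta,\eps}$ into the form \eqref{eq:a_priori_upper_bound_dyn}, invokes Lemma \ref{lem:gene_a_priori_estim}, and concludes by dropping the non-negative term $\phi(A_t)$. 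The lower bound is taken from Lemma \ref{lem:existence_H_delta_eps} and the final boundedness from \eqref{eq:bound_A}, exactly as you propose.

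The step you flag as the main obstacle is indeed the only delicate one, but the way you propose to close it does not work for the $\delta$-shift. To apply Lemma \ref{lem:gene_a_priori_estim} you must write the drift as $\lambda_s+\frac{1}{\eta_s}f(Y^{\delta,\eps}_s)-\Theta_s$ with $\Theta_s\geq 0$, so the $\delta$-correction to be controlled is $\frac{1}{\eta}\bigl[f(\phi(A)-\phi'(A^\delta)H)-f(\phi(A)-\phi'(A)H)\bigr]$. Since $-\phi'$ is non-increasing ($\phi''\geq 0$) and, close to $T$, $A^\delta_t=A_{t-\delta}\geq \delta/\etamax$ lies deterministically above $A_t\leq (T-t)/\etamin$ (the regularization is designed precisely so that $-\phi'(A^\delta)$ stays finite at $T$), one gets $-\phi'(A^\delta_t)\leq-\phi'(A_t)$, hence $\phi(A_t)-\phi'(A^\delta_t)H_t\leq Y^{\delta,\eps}_t$ and, $f$ being non-increasing, $f(\phi(A_t)-\phi'(A^\delta_t)H_t)\geq f(Y^{\delta,\eps}_t)$: this correction \emph{raises} the drift rather than lowering it, and since $-f'(Y)\sim\kappa^1/A$ blows up at $T$ it is not absorbed by the favourable $\eps$-term $-\phi''(A)\frac{H}{\eta}(1-A/A^\eps)$. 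For what it is worth, the paper's own proof passes over this point: when expanding $-\phi'(A_t)F^{\delta,\eps}(t,H^{\delta,\eps}_t)$ it writes $f(Y^{\delta,\eps}_t)$ in place of $f(\phi(A_t)-\phi'(A^\delta_t)H^{\delta,\eps}_t)$, i.e.\ it treats the $\delta$-regularization as if it were absent. So you have located the right difficulty, but ``monotonicity of $f$ plus convexity of $\phi$'' puts this particular correction on the wrong side of the comparison; your argument is not complete until that term is genuinely dealt with (for instance by redefining the comparison process with $\phi'(A^\delta)$, at the cost of the clean It\^o cancellations, or by justifying the identification the paper makes).
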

\begin{proof}
For fixed $\delta$ and $\eps$, the dynamics of $Y^{\delta,\eps}_t= \phi(A_t) -\phi'(A_t) H^{\delta,\eps}_t$ is given by:
\begin{align*}
&-dY^{\delta,\eps}_t =   \phi'(A_t) \frac{1}{\eta_t } dt+ \phi'(A_t) Z^A_t dW_t  -  \frac{1}{2} \phi^{(2)}(A_t) (Z^A_t)^2 dt \\
&\qquad -   \phi^{(2)}(A_t) \frac{H^{\delta,\eps}_t}{\eta_t } dt- \phi^{(2)}(A_t) Z^A_t H^{\delta,\eps}_t dW_t  +  \frac{1}{2} \phi^{(3)}(A_t) (Z^A_t)^2 H^{\delta,\eps}_t dt\\
&\qquad - \phi'(A_t)  F^{\delta,\eps}(t, H^{\delta,\eps}_t) dt - \phi'(A_t) F^H(t,0) dt - \phi'(A_t) \alpha_t H^{\delta,\eps}_t dt\\
&\qquad  -  \phi'(A_t) Z^{H,\delta,\eps}_t dW_t.
\end{align*}
Recall that 
$$- \phi'(A_t) F^H(t,0) = \lambda_t - \phi'(A_t) \frac{\kappa^1_t}{2} \frac{(Z^A_t)^2}{A_t} = \lambda_t +\frac{\phi^{(2)}(A_t)}{2} (Z^A_t)^2.$$
Since $F^{\delta,\eps}$ is given by \eqref{eq:def_F_delta_eps}, 
\begin{eqnarray*} 
- \phi'(A_t) F^{\delta,\eps}(t,H^{\delta,\eps}_t)& = &\frac{1}{\eta_t} \left[ f(Y^{\delta,\eps}_t) - \phi'(A_t) \right] +  \frac{\phi^{(2)}(A_t)A_t}{\eta_t A^\eps_t}  H^{\delta,\eps}_t .
\end{eqnarray*}
And
$$- \phi'(A_t)\alpha_t =  \dfrac{ \kappa^1_t \kappa^2_t}{2}  \left( \dfrac{Z^A_t}{A_t} \right)^2 =- \frac{1}{2} \phi^{(3)}(A_t) (Z^A_t)^2.$$
Therefore we obtain
\begin{eqnarray*}
-dY^{\delta,\eps}_t & =  &\left[ \frac{1}{\eta_t}  f(Y^{\delta,\eps}_t) +\lambda_t \right] dt  - \phi^{(2)}(A_t) \frac{H^{\delta,\eps}_t}{\eta_t } \left[1- \frac{A_t}{A^\eps_t} \right] dt \\
&+& \left[ \phi'(A_t) Z^A_t - \phi^{(2)}(A_t) Z^A_t H^{\delta,\eps}_t-  \phi'(A_t) Z^{H,\delta,\eps}_t \right] dW_t.
\end{eqnarray*}
In other words the process $Y^{\delta,\eps}$ satisfies the BSDE:
\begin{eqnarray*}
Y^{\delta,\eps}_t & = & Y^{\delta,\eps}_{T-\theta} +\int_t^{T-\theta} \left[ \lambda_s + \frac{1}{\eta_s} f(Y^{\delta,\eps}) \right]ds - \int_t^{T-\theta} \Theta_s ds  -  \int_t^{T-\theta}  Z^{Y^{\delta,\eps}}_s dW_s,
\end{eqnarray*}
with a non negative $\Theta$. Using Lemma \ref{lem:gene_a_priori_estim}, we deduce that 
$$\forall t \in [\tau,T), \quad Y^{\delta,\eps}_t \leq \vartheta \left( T -t \right) .$$
This leads to the conclusion of the Lemma.
\end{proof}

%
Now by the comparison principle, for a fixed $\delta > 0$, since $\kappa^1_t/\eta_t \geq 0$, $(H^{\delta,\eps}_t, \ \eps > 0)$ is a increasing sequence when $\eps$ decreases to zero, and for a fixed $\eps > 0$, $(H^{\delta,\eps}_t, \ \delta > 0)$ is a decreasing sequence when $\delta$ decreases to zero. Thereby for any $\eps_1 < \eps_2$ and $\delta_1 < \delta_2 \leq \delta$ for some $\delta > 0$, we have the following inequalities: a.s. 
$$0 \leq H^{\delta_1,\eps_1}_t \leq H^{\delta_2,\eps_1}_t \leq H^{\delta}_t,$$
and
$$0 \leq H^{\delta_1,\eps_2}_t \leq H^{\delta_1,\eps_1}_t \leq H^{\delta}_t,$$
where
\begin{equation} \label{eq:def_H_delta}
H^\delta_t = \lim_{\eps \downarrow 0} H^{\delta,\eps}_t.
\end{equation}
Note that $H^\delta$ also satisfies \eqref{eq:a_priori_estimate_H}. Now for a fixed $\eps > 0$, we define
\begin{equation} \label{eq:def_H_eps}
H^\eps_t = \lim_{\delta \downarrow 0} H^{\delta,\eps}_t,
\end{equation}
and
\begin{equation} \label{eq:def_H}
H_t = \lim_{\eps \downarrow 0} H^{\eps}_t.
\end{equation}
Since $H^{\delta,\eps}_T = 0$ a.s., we have immediately that a.s. $H_T = 0$ and for all $t\in [\tau,T]$, $H_t \geq 0$.

\begin{Prop}  \label{prop:existence_H_gene}
There exists $Z^H \in \bH^p(0,T-)$, such that the couple $(H,Z^H)$ solves the BSDE \eqref{eq:BSDE_H} with generator $F^H$ on the interval $[0,T-\theta]$ for any $0< \theta < T$. 
\end{Prop}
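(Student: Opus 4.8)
The plan is to pass to the limit in the monotone approximations $H^{\delta,\eps}$ on each compact subinterval $[0,T-\theta]$, where the generator ceases to be singular. Indeed, on $[0,T-\theta]$ the bound \eqref{eq:bound_A} gives $A_s\geq (T-s)/\etamax\geq \theta/\etamax$, and since $A$ is non-increasing, $A^\eps_s=A_{s-\delta}$ and $A^\eps_s=A_{s-\eps}$ are bounded below by $\theta/\etamax$ uniformly in $\delta,\eps$; together with the boundedness of $\kappa^1,\kappa^2$ and with $Z^A/A\in\bH^2(0,T-\theta)$, this turns $F^H$ and all its approximations $f^{\delta,\eps}$ into genuine (non-singular) monotone drivers on $[0,T-\theta]$. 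I keep working under the measure $\bQ$ fixed in this section, so that the driver is $F^H=F+\varpi+\tfrac{\kappa^1\kappa^2}{2}(Z^A/A)^2h$; undoing the Girsanov shift at the very end returns the statement to the original BSDE \eqref{eq:BSDE_H} under $\bP$.

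First I record the convergence of the states. By the monotonicity established just before the statement and the $(\delta,\eps)$-independent a priori bound of Lemma~\ref{lem:a_priori_estim_H}, namely $0\leq H^{\delta,\eps}_t\leq \bar H_t:=\vartheta(T-t)/(-\phi'(A_t))$, the iterated limit \eqref{eq:def_H} gives $H^{\delta,\eps}_t\to H_t$ a.s., dominated on $[0,T-\theta]$ by $\sup_{t\leq T-\theta}\bar H_t<+\infty$. Dominated convergence then yields $H^{\delta,\eps}\to H$ in $L^p(\Omega\times[0,T-\theta])$ for every $p\geq 1$. Next I pass to the limit in the driver. For $h\in[0,\bar H_t]$ the inequality $A^\delta_t=A_{t-\delta}\geq A_t$ together with the monotonicity of $-\phi'$ gives $0\leq -\phi'(A^\delta_t)h\leq -\phi'(A_t)\bar H_t=\vartheta(T-t)$, so the argument of $f$ in $F^{\delta,\eps}$ stays in the bounded range $[\phi(A_t),\phi(A_t)+\vartheta(T-t)]$; hence on $[0,T-\theta]$, using the boundedness of $1/A^\eps$, $\kappa^1$, $\kappa^2$ and $Z^A/A\in\bH^2(0,T-\theta)$, the family $f^{\delta,\eps}(\cdot,H^{\delta,\eps})$ admits a dominating function in $L^p(\Omega\times[0,T-\theta])$ that is independent of $(\delta,\eps)$. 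Since $A^\delta,A^\eps\to A$ and $f,\phi,\phi'$ are continuous, $f^{\delta,\eps}(s,H^{\delta,\eps}_s)\to F^H(s,H_s)$ a.s., and therefore in $L^p(\Omega\times[0,T-\theta])$.

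It remains to identify $Z^H$. Writing the approximate equation \eqref{eq:approx_H_BSDE} in integral form on $[0,T-\theta]$ yields
$$\int_0^{T-\theta}Z^{H,\delta,\eps}_s\,dW_s=H^{\delta,\eps}_0-H^{\delta,\eps}_{T-\theta}+\int_0^{T-\theta}f^{\delta,\eps}(s,H^{\delta,\eps}_s)\,ds,$$
so by the It\^o isometry the $\bH^2(0,T-\theta)$-distance between two such $Z^{H,\delta,\eps}$ and $Z^{H,\delta',\eps'}$ is controlled by the $L^2$-differences of the endpoints $H^{\delta,\eps}_0,H^{\delta,\eps}_{T-\theta}$ and (via Cauchy--Schwarz) by the $L^2(\Omega\times[0,T-\theta])$-difference of the drivers, all of which vanish along the approximation by the previous paragraph. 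Thus $(Z^{H,\delta,\eps})$ is Cauchy in $\bH^2(0,T-\theta)$; I call its limit $Z^H$ and pass to the limit in the integral equation to conclude that $(H,Z^H)$ solves the BSDE with driver $F^H$ on $[0,T-\theta]$ (and hence, after the Girsanov shift, the BSDE \eqref{eq:BSDE_H} under $\bP$). To obtain $Z^H\in\bH^p(0,T-\theta)$, hence $\bH^p(0,T-)$, I apply It\^o's formula to $|H^{\delta,\eps}|^2$ on $[0,T-\theta]$ and combine the Burkholder--Davis--Gundy inequality with the uniform bounds on $H^{\delta,\eps}$ and on $f^{\delta,\eps}(\cdot,H^{\delta,\eps})$ to bound $\bE(\int_0^{T-\theta}|Z^{H,\delta,\eps}_s|^2\,ds)^{p/2}$ independently of $(\delta,\eps)$, and then invoke lower semicontinuity of the norm.

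I expect the main obstacle to be the bookkeeping of the double monotone limit in $(\delta,\eps)$ and, within it, the uniform domination of the nonlinear term $f(\phi(A_s)-\phi'(A^\delta_s)H^{\delta,\eps}_s)$: the naive bound $\Phi^\sharp_r$ of Lemma~\ref{lem:existence_H_delta_eps} degenerates as $\delta\downarrow 0$ because $\phi'(\delta/\etamax)\to-\infty$, so one must instead exploit $A^\delta_s\geq A_s$ together with the a priori bound to pin the argument of $f$ inside the bounded interval $[\phi(A_s),\phi(A_s)+\vartheta(T-s)]$. The second delicate point is that the truncated linear coefficient $\kappa^1/(\eta A^\eps)$ may only be replaced by its limit $\kappa^1/(\eta A)$ because $A$ stays bounded away from zero on $[0,T-\theta]$; this is precisely why the conclusion is confined to the subintervals $[0,T-\theta]$ rather than the whole of $[0,T)$.
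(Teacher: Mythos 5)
Your route is genuinely different from the paper's and, on the compact subintervals $[0,T-\theta]$, arguably more economical. The paper proves that $(H^{\delta,\eps},Z^{H,\delta,\eps})$ is Cauchy in $\bS^p(\tau,T-\theta)$ by applying It\^o's formula to $e^{p\widehat\mu}\lvert\Delta H\rvert^p$ with an exponential weight built from $\kappa^1\kappa^2(Z^A/A)^2$ (which is why its exponent is capped by the exponential-moment order of \eqref{eq:exponen_moment_Z^A_bis}), using the monotonicity of $f$ to reduce everything to the off-diagonal increments $\Delta f$ and $1/A^{\eps_1}-1/A^{\eps_2}$, and then a BDG/Young argument for the martingale parts. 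You instead get the states by dominated convergence, the drivers by dominated convergence, and the $Z$'s by the It\^o isometry; this is viable precisely because under $\bQ$ the driver no longer depends on $z$ and because on $[0,T-\theta]$ every ingredient except $(Z^A/A)^2$ is bounded. Your unweighted argument even yields $Z^H\in\bH^p(0,T-\theta)$ for every $p$, slightly more than the paper's weighted estimates give.

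Two concrete points need repair. First, $A$ is \emph{not} pathwise non-increasing: by \eqref{eq:dynamics_A} it is a supermartingale with a nontrivial martingale part, so the inequality $A^\delta_t=A_{t-\delta}\geq A_t$, on which you rely to confine the argument of $f$ to $[\phi(A_t),\phi(A_t)+\vartheta(T-t)]$, is false. The domination you need survives for a different reason: on $[0,T-\theta]$ one has $A^\delta_t\geq (T-t+\delta)/\etamax\geq\theta/\etamax$ by \eqref{eq:bound_A}, hence $-\phi'(A^\delta_t)\leq-\phi'(\theta/\etamax)$ while $H^{\delta,\eps}_t\leq\vartheta(\theta)/(-\phi'(T/\etamin))$, so the argument of $f$ stays in a compact subset of $(0,\infty)$ uniformly in $(\delta,\eps)$. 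Second, you treat $(\delta,\eps)\to(0,0)$ as a joint limit, but $H^{\delta,\eps}$ increases as $\eps\downarrow 0$ and decreases as $\delta\downarrow 0$, and for such doubly monotone arrays the joint limit need not exist nor coincide with the iterated limit \eqref{eq:def_H}; the paper's closing Remark even notes that the other iterated limit $H^\natural=\lim_{\delta}\lim_{\eps}H^{\delta,\eps}$ is only known to dominate $H$. Your Cauchy/isometry argument must therefore be run twice, sequentially --- first $\delta\downarrow 0$ at fixed $\eps$, producing $(H^\eps,Z^{H,\eps})$ solving \eqref{eq:BSDE_H_eps}, then $\eps\downarrow 0$ --- exactly as the paper does; each stage goes through with your estimates unchanged. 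A minor ordering issue: to invoke the It\^o isometry you should first secure the uniform $\bH^2(0,T-\theta)$ bound on $Z^{H,\delta,\eps}$ via your $\lvert H\rvert^2$-It\^o/BDG step, since Lemma \ref{lem:existence_H_delta_eps} only guarantees integrability of order $p<q$ a priori.
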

\begin{proof}
Let us define $c_p = p(1\wedge (p-1))$. First we work on the interval $[\tau,T]$.

\noindent {\it Step 1.} Given $\eps_1 < \eps_2$ and $\delta_1 < \delta_2$, applying It\^o's formula to $\Delta H = H^{\delta_1,\eps_1} - H^{\delta_2,\eps_2}$ on the interval $[t,T-\theta]$, $\theta > 0$, leads to:
\begin{eqnarray*}
&&e^{p\widehat \mu_t} |\Delta H_t|^p + \frac{c_p}{2} \int_t^{T-\theta}e^{p\widehat \mu_s} |\Delta H_s|^{p-2} \mathbf{1}_{\Delta H_s \neq 0} |\Delta Z^H_s|^2 ds \\
&& \quad \leq e^{p\widehat \mu_T} |\Delta H_{T-\theta}|^p \\
&& \qquad + p \int_t^{T-\theta}e^{p\widehat \mu_s} |\Delta H_s|^{p-2} \mathbf{1}_{\Delta H_s \neq 0} \Delta H_s (f^{\delta_1,\eps_1}(s,H^{\delta_1,\eps_1}_s) - f^{\delta_2,\eps_2}(s,H^{\delta_2,\eps_2}_s)) ds \\
&& \qquad - p \int_t^{T-\theta} \left(\alpha_s + \dfrac{\kappa^1_s}{\eta_s \theta}  + 2\frac{p-1}{p} \right) e^{p\widehat \mu_s}|\Delta H_s|^p ds  - p \int_t^{T-\theta} e^{p\widehat \mu_s}|\Delta H_s|^{p-2} \mathbf{1}_{\Delta H_s \neq 0} \Delta H_s\Delta Z^H_s dW_s
\end{eqnarray*}
with $\Delta Z^H = Z^{H,\delta_1,\eps_1} - Z^{H,\delta_2,\eps_2}$ and 
$$\displaystyle \widehat \mu_t = \int_0^t \left( \alpha_s + \dfrac{\kappa^1_s\etamax}{\eta_s \theta} +2 \frac{p-1}{p} \right) ds.$$ 
Remark that from the monotonicity of $f$:
\begin{eqnarray*}
&&\Delta H_s(f^{\delta_1,\eps_1}(s,H^{\delta_1,\eps_1}_s) - f^{\delta_2,\eps_2}(s,H^{\delta_2,\eps_2}_s)) =\Delta H_s \left[ \frac{\kappa^1_s}{\eta_s A^{\eps_1}_s} H^{\delta_1,\eps_1}_s -\frac{\kappa^1_s}{\eta_s A^{\eps_2}_s} H^{\delta_2,\eps_2}_s \right] \\
&&\qquad + \frac{\Delta H_s}{-\phi'(A_s)\eta_s} \left[  f(\phi(A_s) - \phi'(A^{\delta_1}_s) H^{\delta_1,\eps_1}_s) - f(\phi(A_s) - \phi'(A^{\delta_2}_s) H^{\delta_2,\eps_2}_s) \right] +\alpha_s (\Delta H_s)^2  \\
&& \quad \leq  \left[ \frac{\kappa^1_s}{\eta_s A^{\eps_1}_s}+ \alpha_s  \right]   (\Delta H_s)^2 + \frac{\kappa^1_s}{\eta_s }  \Delta H_s H^{\delta_2,\eps_2}_s \left[ \frac{1}{A^{\eps_1}_s} -\frac{1}{A^{\eps_2}_s}\right]  \\
&&\qquad + \frac{\Delta H_s}{-\phi'(A_s)\eta_s}  \left[  f(\phi(A_s) - \phi'(A^{\delta_1}_s) H^{\delta_2,\eps_2}_s) - f(\phi(A_s) - \phi'(A^{\delta_2}_s) H^{\delta_2,\eps_2}_s) \right] .
\end{eqnarray*}
We denote 
$$\Delta f_s =   f(\phi(A_s) - \phi'(A^{\delta_1}_s) H^{\delta_2,\eps_2}_s) - f(\phi(A_s) - \phi'(A^{\delta_2}_s) H^{\delta_2,\eps_2}_s).$$
We deduce that 
\begin{eqnarray*}
&&e^{p\widehat \mu_t} |\Delta H_t|^p + \frac{c_p}{2} \int_t^{T-\theta}e^{p\widehat \mu_s} |\Delta H_s|^{p-2} \mathbf{1}_{\Delta H_s \neq 0} |\Delta Z^H_s|^2 ds \\
&& \quad \leq e^{p\widehat \mu_{T-\theta}}|\Delta H_{T-\theta}|^p + p \int_t^{T-\theta} e^{p\widehat \mu_s}\left[\frac{\kappa^1_s}{\eta_s A^{\eps_1}_s}- \dfrac{\kappa^1_s\etamax }{\eta_s \theta}  -2\frac{p-1}{p}\right] |\Delta H_s|^{p} ds \\
&& \qquad + p \int_t^{T-\theta}e^{p\widehat \mu_s} |\Delta H_s|^{p-2} \mathbf{1}_{\Delta H_s \neq 0} \frac{\kappa^1_s}{\eta_s }  \Delta H_s H^{\delta_2,\eps_2}_s \left( \frac{1}{A^{\eps_1}_s} -\frac{1}{A^{\eps_2}_s}\right) ds \\ 
&&\qquad + p \int_t^{T-\theta}e^{p\widehat \mu_s} |\Delta H_s|^{p-2} \mathbf{1}_{\Delta H_s \neq 0}  \frac{\Delta H_s}{-\phi'(A_s)\eta_s} \Delta f_s ds \\
&& \qquad - p \int_t^{T-\theta}e^{p\widehat \mu_s} |\Delta H_s|^{p-2} \mathbf{1}_{\Delta H_s \neq 0} \Delta H_s\Delta Z^H_s dW_s.
\end{eqnarray*}
Note that for any $t\leq T-\theta$
$$A^{\eps_1}_t \geq \frac{T-t+\eps_1}{\etamax} \geq \frac{\theta+\eps_1}{\etamax} \geq \frac{\theta}{\etamax} .$$
By Young's inequality
\begin{eqnarray*}
&& p\int_t^{T-\theta}e^{p\widehat \mu_s} |\Delta H_s|^{p-2} \mathbf{1}_{\Delta H_s \neq 0}  \frac{\kappa^1_s}{\eta_s }  \Delta H_s H^{\delta_2,\eps_2}_s \left( \frac{1}{A^{\eps_1}_s} -\frac{1}{A^{\eps_2}_s}\right)ds \\
&&\quad \leq (p-1) \int_t^{T-\theta}e^{p\widehat \mu_s} |\Delta H_s|^{p}ds +  \int_t^{T-\theta}e^{p\widehat \mu_s} \left[H^{\delta_2,\eps_2}_s \frac{\kappa^1_s}{\eta_s }   \left( \frac{1}{A^{\eps_1}_s} -\frac{1}{A^{\eps_2}_s}\right) \right]^p ds 
\end{eqnarray*}
and
\begin{eqnarray*}
&& p\int_t^{T-\theta}e^{p\widehat \mu_s} |\Delta H_s|^{p-2} \mathbf{1}_{\Delta H_s \neq 0} \frac{\Delta H_s}{-\phi'(A_s)\eta_s} \Delta f_sds \\
&&\quad \leq (p-1) \int_t^{T-\theta}e^{p\widehat \mu_s} |\Delta H_s|^{p}ds  +  \int_t^{T-\theta}e^{p\widehat \mu_s} \left[\frac{1}{-\phi'(A_s)\eta_s}\left|  \Delta f_s \right| \right]^p ds .
\end{eqnarray*}
Recall that $\kappa^1$, $1/\eta$ are bounded on $[0,T]$ whereas $1/\phi'(A)$ is bounded on $[0,T-\theta]$. From Estimate \eqref{eq:a_priori_estimate_H}, on the interval $[0,T-\theta]$, $H^{\eps_2,\delta_2}$ is also bounded and we have:
\begin{eqnarray} \nonumber 
&&e^{p\widehat \mu_t} |\Delta H_t|^p + \frac{c_p}{2} \int_t^{T-\theta}e^{p\widehat \mu_s} |\Delta H_s|^{p-2} \mathbf{1}_{\Delta H_s \neq 0} |\Delta Z^H_s|^2 ds \\ \nonumber 
&& \quad \leq e^{p\widehat \mu_{T-\theta}}|\Delta H_{T-\theta}|^p   + C \int_t^{T-\theta} \left[ \left| \Delta f_s \right|^p +  \left( \frac{1}{A^{\eps_1}_s} -\frac{1}{A^{\eps_2}_s}\right)^p \right] ds\\ \label{eq:estim_diff_conv}
&& \qquad - p \int_t^{T-\theta}e^{p\widehat \mu_s} |\Delta H_s|^{p-2} \mathbf{1}_{\Delta H_s \neq 0} \Delta H_s\Delta Z^H_s dW_s.
\end{eqnarray}
The constant $C$ depends on all bounds of our coefficients and on $\theta$. This constant explodes when $\theta$ goes to zero.

\noindent {\it Step 2.} Let us fix $\eps=\eps_1=\eps_2 > 0$. Since for $ \delta_2 \leq \delta$, $H^{\delta_2,\eps}\leq H^{\delta}$ and $H^\delta$ satisfies the estimate \eqref{eq:a_priori_estimate_H}, using the dominated convergence theorem 
$$\bE  \int_\tau^{T-\theta} e^{p\widehat \mu_s} \left|   f(\phi(A_s) - \phi'(A^{\delta_1}_s) H^{\delta_2,\eps_2}_s) - f(\phi(A_s) - \phi'(A^{\delta_2}_s) H^{\delta_2,\eps_2}_s) \right|^p ds \to 0,$$
as $\delta_1$ and $\delta_2$  tend to zero. Therefore using \eqref{eq:estim_diff_conv} and taking the expectation we deduce that 
$$\bE \int_\tau^{T-\theta}e^{p\widehat \mu_s} |\Delta H_s|^{p-2} \mathbf{1}_{\Delta H_s \neq 0} |\Delta Z^H_s|^2 ds $$
tends to zero when $\delta_1$ and $\delta_2$ go to zero. Moreover remark that if 
$$\Lambda_t = \int_\tau^t e^{p\widehat \mu_s} |\Delta H_s|^{p-2} \mathbf{1}_{\Delta H_s \neq 0} \Delta H_s\Delta Z^H_sdW_s ,$$
then the bracket $[\Lambda]_{T-\theta}^{1/2}$ can be handled as in \cite{bria:dely:hu:03}: for any $C> 0$
\begin{eqnarray*}
\bE \left(  [\Lambda]_{T-\theta}^{1/2} \right) & \leq & \frac{C}{2} \bE \left( \sup_{t\in [\tau,T-\theta]} e^{p\widehat \mu_{t}}|\Delta H_{t}|^p \right) \\
& + & \frac{1}{2C} \bE \left( \int_\tau^{T-\theta} e^{p\widehat \mu_s} |\Delta H_s|^{p-2} \mathbf{1}_{\Delta H_s \neq 0} |\Delta Z^H_s|^2 ds \right).
\end{eqnarray*}
Thereby $(H^{\delta,\eps}, \ \delta > 0)$ is a Cauchy sequence:
$$\bE \left( \sup_{t\in [\tau,T-\theta]} e^{p\widehat \mu_{t}}|\Delta H_{t}|^p \right) \to 0$$
as $\delta_1$ and $\delta_2$ tend to zero. Finally 
\begin{eqnarray*} \nonumber
&& \bE \left( \int_\tau^{T-\theta} e^{2\widehat \mu_s}|\Delta Z^H_s|^2 ds \right)^{p/2}  =  \bE \left( \int_\tau^{T-\theta}e^{2\widehat \mu_s} \left( \Delta H_{s} \right)^{2-p}\left( \Delta H_{s} \right)^{p-2}\mathbf{1}_{\Delta H_s \neq 0}  |\Delta Z^H_s|^2ds \right)^{p/2} \\ \nonumber
&& \quad \leq \bE \left[ \left( \sup_{t\in [\tau,T-\theta]} e^{\widehat \mu_{t}}|\Delta H_{t}|\right)^{p(2-p)/2} \left(\int_\tau^{T-\theta} e^{p\widehat \mu_s} \left( \Delta H_{s} \right)^{p-2}\mathbf{1}_{\Delta H_s \neq 0}  |\Delta Z^H_s|^2 ds \right)^{p/2}\right]\\ \nonumber
&& \quad \leq \left\{\bE  \left( \sup_{t\in [\tau,T-\theta]} e^{p\widehat \mu_{t}}|\Delta H_{t}|^p \right)\right\}^{(2-p)/2}  \left\{\bE \int_\tau^{T-\theta} e^{p\widehat \mu_s} \left( \Delta H_{s} \right)^{p-2}\mathbf{1}_{\Delta H_s \neq 0}  |\Delta Z^H_s|^2 ds \right\}^{p/2} \\ \label{eq:trick_control_mart_part}
&& \quad  \leq \frac{2-p}{2} \bE \left[ \sup_{t\in [\tau,T-\theta]} e^{p\widehat \mu_{t}}|\Delta H_{t}|^p\right] + \frac{p}{2} \bE \int_\tau^{T-\theta} e^{p\widehat \mu_s} \left( \Delta H_{s} \right)^{p-2}\mathbf{1}_{\Delta H_s \neq 0}  |\Delta Z^H_s|^2  ds
\end{eqnarray*}
where we have used H\"older's and Young's inequality with $ \frac{2-p}{2} + \frac{p}{2}=1$. 

Hence we obtain that $(H^{\delta,\eps},Z^{H,\delta,\eps})$ converges in $\bS^p(\tau,T-\theta)$ to some process $(H^\eps,Z^{H,\eps})$. The process $H^\eps$ is non negative and also satisfies the a priori estimate \eqref{eq:a_priori_estimate_H} with $H^\eps_T = 0$, and we have for any $\tau\leq t \leq T-\theta  < T$
\begin{eqnarray}\nonumber
H^\eps_t & = & H^\eps_{T-\theta} + \int_t^{T-\theta} \varpi_s ds  +  \int_t^{T-\theta}  \frac{\kappa^1_s}{\eta_s A^\eps_s} H^\eps_s ds- \int_t^{T-\theta} Z^{H,\eps}_s dW_s \\ \label{eq:BSDE_H_eps}
 &+ & \int_t^{T-\theta} \frac{1}{-\phi'(A_s) \eta_s}\left[  f(\phi(A_s) -\phi'(A_s) H^\eps_s) - f (\phi(A_s)) \right] ds .
\end{eqnarray}

\noindent {\it Step 3.} Let us prove the convergence of $(H^\eps,Z^{H,\eps})$ when $\eps$ tends to zero. The arguments are almost the same as in the second step. Indeed the formula \eqref{eq:estim_diff_conv} becomes:
\begin{eqnarray*} \nonumber 
&&e^{p\widehat \mu_t} |\Delta H_t|^p + \frac{c_p}{2} \int_t^{T-\theta}e^{p\widehat \mu_s} |\Delta H_s|^{p-2} \mathbf{1}_{\Delta H_s \neq 0} |\Delta Z^H_s|^2 ds \\ \nonumber 
&& \quad \leq e^{p\widehat \mu_{T-\theta}}|\Delta H_{T-\theta}|^p   + C \int_t^{T-\theta} \left[ \left( \frac{1}{A^{\eps_1}_s} -\frac{1}{A^{\eps_2}_s}\right)^p \right] ds\\ \nonumber 
&& \qquad - p \int_t^{T-\theta}e^{p\widehat \mu_s} |\Delta H_s|^{p-2} \mathbf{1}_{\Delta H_s \neq 0} \Delta H_s\Delta Z^H_s dW_s.
\end{eqnarray*}
with  $\Delta H = H^{\eps_1} - H^{\eps_2}$ and $\Delta Z^H = Z^{H,\eps_1} - Z^{H,\eps_2}$. The conclusion follows from the same arguments as in step 2.

Hence we have construct $(H,Z^H)$ solution of the BSDE \eqref{eq:BSDE_H} on the interval $[\tau,T)$. Now if we consider the BSDE on $[0,\tau]$
$$H_t = H_{\tau} + \int_t^\tau F^H(s,H_s) ds - \int_t^\tau Z^H_s dW_s,$$
we can apply directly \cite[Proposition 5.24]{pard:rasc:14}. Indeed the terminal value $H_\tau$ is bounded, and the arguments of the proof can be applied. 
\end{proof}

Note that from \eqref{eq:a_priori_estimate_H}  and the arguments to prove \eqref{eq:a_priori_estim_Y_bis}, we obtain that for any $\eps > 0$,  a.s. on $[0,T)$
\begin{equation} \label{eq:upper_Y_estimate}
0\leq \phi(A_t) - \phi'(A_t) H_t \leq \phi \left( \frac{T-t}{(1+\eps)\etamax}\right) .
\end{equation}
and 
\begin{equation} \label{eq:H_estimate}
0\leq  H_t \leq C \frac{\phi \left( \frac{T-t}{(1+\eps)\etamax}\right)}{-\phi'\left( \frac{T-t}{\etamax}\right)} \leq C \frac{\phi \left( \frac{T-t}{(1+\eps)\etamax}\right)}{-\phi'\left( \frac{T-t}{(1+\eps)\etamax}\right)} = C \frac{\phi \left( \frac{T-t}{(1+\eps)\etamax}\right)}{-f\left(\phi\left( \frac{T-t}{(1+\eps)\etamax}\right)\right)}. 
\end{equation}
Since $f$ is a non positive and non increasing function, from \ref{A3}, the function $y\mapsto -1/f(y)$ is an integrable, non-negative and non increasing function. Thereby we know that (see \cite[Section 178]{hardy:08})
$$\lim_{y \to + \infty} \frac{y}{-f(y)} = 0.$$ 
Arguing as in the proof of Proposition \ref{prop:existence_sol_BSDE_sing_gen}, we obtain that this solution $(H,Z^H)$ is a solution of the BSDE \eqref{eq:BSDE_H} (in the sense of Definition \ref{def:sing_gene_sol}). To summarize
\begin{Thm} \label{thm:exist_H}
Assume that {\rm \ref{A1}} to {\rm \ref{A3}}, {\rm \ref{C1_concave}} to {\rm \ref{C5_inte}}  and {\rm \ref{H_diff}} hold. There exists a process $(H,Z^H)$, which is the minimal non-negative solution of the BSDE \eqref{eq:BSDE_H}, that is:
\begin{itemize}
\item $H$ is non negative and essentially bounded: for any $0 \leq t < T$, $0\leq \sup_{s\in[0,t]} H_s <+\infty$ a.s. and 
$$\bE \int_0^T | F^H(s,H_s,Z^H_s)| ds < +\infty.$$
\item The process $Z^H$ belongs to $\bH^1(0,T) \cap \bH^p(0,T-)$ for some $p>1$.
\item For any $0 \leq t \leq T$
$$H_t = \int_t^T F^H(s,H_s,Z^H_s) ds - \int_t^T Z^H_s dW_s.$$
In particular
$$\lim_{t\to T} H_t = 0 = H_T.$$
\item For any  $(\widehat H, \widehat Z)$ solution of the BSDE  \eqref{eq:BSDE_H}, a.s. for any $t \in [0,T]$, $\widehat H_t \geq H_t$. 
\end{itemize}
\end{Thm}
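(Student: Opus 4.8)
The plan is to assemble the ingredients already established in this appendix, and to reduce the two points that are not yet in hand — the global integrability required by Definition~\ref{def:sing_gene_sol} and the minimality — to the monotone structure of the approximation $(H^{\delta,\eps})$. Working throughout under the measure $\bQ$ (so that the generator no longer depends on $z$, the term $\kappa^1 Z^A/A$ having been absorbed by Girsanov), I would organise the argument in three stages.

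First, existence on $[0,T)$ and the basic estimates come directly from Proposition~\ref{prop:existence_H_gene}: the process $(H,Z^H)$ defined through the double limit \eqref{eq:def_H_eps}--\eqref{eq:def_H} solves \eqref{eq:BSDE_H} on every interval $[0,T-\theta]$ with $Z^H\in\bH^p(0,T-)$, and passing the a priori bound \eqref{eq:a_priori_estimate_H} to the limit yields \eqref{eq:upper_Y_estimate} and \eqref{eq:H_estimate}. In particular $H$ is non-negative and essentially bounded on each $[0,t]$, $t<T$. From \eqref{eq:H_estimate} one has $0\le H_t\le C\,\phi(x)/(-f(\phi(x)))$ with $x=(T-t)/((1+\eps)\etamax)$, and since $-f$ is non-increasing the classical estimate of \cite[Section~178]{hardy:08} gives $y/(-f(y))\to0$; hence $\lim_{t\to T}H_t=0=H_T$.

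Second, to obtain the global dynamics and the integrability of Definition~\ref{def:sing_gene_sol}, I would mimic the proof of Proposition~\ref{prop:existence_sol_BSDE_sing_gen}. Corollary~\ref{lem:integrability_f_H_0} already gives $\varpi=F^H(\cdot,0,0)\in\bL^p(\Omega\times[0,T])$, and the boundedness of $\kappa^1,\kappa^2$ together with \eqref{eq:estim_ZA} controls the linear part $L$ on $\Omega\times[0,T]$; the bound \eqref{eq:H_estimate} is exactly what makes $\frac{-f'(\phi(A))}{\eta}H$ and $\tfrac12\kappa^1\kappa^2(Z^A/A)^2H$ integrable, as in Lemma~\ref{lem:integrable_0_T} and the estimate \eqref{eq:integrable_sing_2}. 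Taking expectations in the dynamics and using $\bE H_t\to0$ forces the non-negative nonlinear term $\frac{1}{\eta\phi'(A)}[f(\phi(A)-\phi'(A)H)-f(\phi(A))]$ to be integrable as well; hence $\bE\int_0^T|F^H(s,H_s,Z^H_s)|\,ds<\infty$, the relation extends to $[0,T]$, and $\bE\sup_{t<T}|\int_0^tZ^H\,dW|<\infty$ gives $Z^H\in\bH^1(0,T)$ by Burkholder--Davis--Gundy.

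The last and main point is minimality. Let $(\widehat H,\widehat Z)$ be any solution; I would first show $\widehat H\ge0$ by repeating the argument from the first part of Proposition~\ref{prop:sing_gene_uniqueness} (It\^o's formula for $(\widehat H)^-$ combined with Girsanov, Lemma~\ref{lem:expo_moment_1}). The key structural remark is that the intermediate process $H^\eps=\lim_{\delta\downarrow0}H^{\delta,\eps}$ solves \eqref{eq:BSDE_H} with the generator obtained from $f^{\delta,\eps}$ in \eqref{eq:def_F_delta_eps} by letting $\delta\downarrow0$, which differs from $F^H$ only in that the coefficient of its linear term is $\kappa^1/(\eta A^\eps)$ instead of $\kappa^1/(\eta A)$; since $A^\eps_t\ge A_t$, this generator lies pointwise below $F^H$ for $h\ge0$. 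Comparing $H^\eps$ and $\widehat H$ on $[t,T-\theta]$ through the linearised equation, whose monotonicity coefficient $\beta_s=\kappa^1_s/(\eta_s A^\eps_s)+\tfrac12\kappa^1_s\kappa^2_s(Z^A_s/A_s)^2$ admits the exponential moment \eqref{eq:exponen_moment_Z^A_bis}, and using $\widehat H\ge0$ together with $H^\eps_{T-\theta}\to0$ as $\theta\to0$ (again from \eqref{eq:H_estimate} and $y/(-f(y))\to0$), I obtain $H^\eps_t\le\widehat H_t$; letting $\eps\downarrow0$ yields $H_t\le\widehat H_t$. The hard part is precisely this comparison: the weight $\beta$ blows up at $T$, so one must run the argument on $[t,T-\theta]$ with the exponential-moment change of measure and let the terminal layer vanish, rather than invoking a comparison theorem on $[t,T]$ directly.
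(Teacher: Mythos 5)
Your proposal is correct and follows essentially the same route as the paper: existence and the terminal behaviour are taken from Proposition \ref{prop:existence_H_gene} together with the bounds \eqref{eq:upper_Y_estimate}--\eqref{eq:H_estimate}, global integrability is obtained by repeating the argument of Proposition \ref{prop:existence_sol_BSDE_sing_gen}, and minimality is proved exactly as in the paper by comparing $\widehat H$ with $H^\eps$ on $[t,T-\theta]$ via the linearised equation, using that $A^\eps_s\geq A_s$ makes the residual term $\frac{\kappa^1_s}{\eta_s}\bigl(\frac{1}{A_s}-\frac{1}{A^\eps_s}\bigr)\widehat H_s$ non-negative, then letting $\theta\downarrow 0$ and $\eps\downarrow 0$. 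No substantive difference from the paper's proof.
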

\begin{proof}
The only thing to prove is the minimality. Let $(\widehat H, \widehat Z)$ be another solution of the BSDE  \eqref{eq:BSDE_H}. The first part of the proof of Proposition \ref{prop:sing_gene_uniqueness} shows that $\widehat H$ is a non-negative process. 

Now for $\eps > 0$, from \eqref{eq:BSDE_H_eps}, the process $\Delta H = \widehat H - H^\eps$ satisfies for any $\theta > 0$:
\begin{eqnarray*}
\Delta H _t & = & \Delta H_{T-\theta} + \int_t^{T-\theta} \left[ \frac{\kappa^1_s}{\eta_s}\left( \frac{1}{A_s} -\frac{1}{A^\eps_s} \right)\widehat H_s +\left( \frac{\kappa^1_s}{\eta_s A^\eps_s} +\dfrac{ \kappa^1_s \kappa^2_s}{2}  \left( \dfrac{Z^A_s}{A_s} \right)^2\right) \Delta H_s  \right] ds \\
& + &  \int_t^{T-\theta} \frac{1}{-\phi'(A_s) \eta_s }\left[ f(\phi(A_s) - \phi'(A_s) \widehat H_s) - f(\phi(A_s) - \phi'(A_s) H^\eps_s) \right] ds \\
& - &  \int_t^{T-\theta} (\widehat Z_s - Z^\eps_s) dW^{\bQ}_s.
\end{eqnarray*}
If 
$$\nu_s =  \frac{1}{-\phi'(A_s) \eta_s }\left[ f(\phi(A_s) - \phi'(A_s) \widehat H_s) - f(\phi(A_s) - \phi'(A_s) H^\eps_s) \right]  \frac{1}{\Delta H_s} \mathbf{1}_{\Delta H_s \neq 0},$$
then this process $\nu$ is bounded from above by zero since $f$ is monotone. Thus if
$$\Gamma_{t,s} = \exp \left( \int_t^s\left( \frac{\kappa^1_u}{\eta_u A^\eps_u} + \dfrac{ \kappa^1_u \kappa^2_u}{2}  \left( \dfrac{Z^A_u}{A_u} \right)^2\right) du \right),$$
by standard arguments concerning linear BSDE (see \cite[Lemma 10]{krus:popi:16} or \cite[Proposition 5.31]{pard:rasc:14}), we have:
$$\Delta H _t  = \bE^{\bQ} \left[ \Delta H_{T-\theta} \Gamma_{t,T-\theta} + \int_t^{T-\theta}  \frac{\kappa^1_s}{\eta_s}\left( \frac{1}{A_s} -\frac{1}{A^\eps_s} \right)\widehat H_s\Gamma_{t,s} ds \bigg| \cF_t\right] \geq \bE^{\bQ} \left[ \Delta H_{T-\theta} \Gamma_{t,T-\theta} \bigg| \cF_t\right]. $$
By Fatou's lemma, letting $\theta$ going to zero, we obtain that for any $\eps > 0$, $\widehat H_t \geq H^\eps_t$. Hence the minimality of $H$ is proved.
\end{proof}

The arguments developed above, with straightforward modifications, show the next result. 
\begin{Prop} \label{prop:exist_H}
Assume that {\rm \ref{A1}} to {\rm \ref{A3}}, {\rm \ref{C2_tech}} and {\rm \ref{C5_inte}} hold. There exists a process $(\widehat H,Z^{\widehat H})$, which is the minimal non-negative solution of the BSDE \eqref{eq:BSDE_H_general_case}. 
\end{Prop}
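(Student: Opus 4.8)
The plan is to repeat the self-contained construction of Section~\ref{ssect:constr_H} (Lemma~\ref{lem:existence_H_delta_eps} through Theorem~\ref{thm:exist_H}) verbatim, replacing $F^H$ by the generator $F^{\widehat H}$ of \eqref{eq:BSDE_H_general_case}. The key observation that makes the weaker hypotheses sufficient is that $F^{\widehat H}$ does \emph{not} depend on $z$, so the Girsanov step together with Lemma~\ref{lem:expo_moment_1} and assumption \ref{H_diff} are no longer needed; and since the generator contains no $\kappa^2$-term, the concavity assumptions \ref{C1_concave} and \ref{C3_tech} are not needed either. Only \ref{A1}--\ref{A3}, \ref{C2_tech} (for the bounded coefficients) and \ref{C5_inte} (for the integrability of the approximating drivers) remain in play.

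First I would check that the coefficients $\widehat\kappa^1$ and $\kappa^3$ are non-negative and bounded on $[0,T]$. Writing $x=(T-t)/\etamax$ one has $\kappa^3_t=-x\phi^{(2)}(x)/\phi'(x)$, i.e.\ the deterministic evaluation of $\kappa^1$, so its boundedness near $T$ is exactly Lemma~\ref{lem:bounded_cond}; boundedness of $\widehat\kappa^1_t=(\phi'(A_t)/\psius(t))\kappa^1_t$ follows from \eqref{eq:increment_deriv_phi} (to control $\phi'(A_t)/\phi'((T-t)/\etamax)$ via \eqref{eq:bound_A}) combined with the boundedness of $\kappa^1$; away from $T$ both are bounded by the regularity \ref{A1}--\ref{A2} of $f$. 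The free term $F^{\widehat H}(\cdot,0)=\lambda/(-\psius)+\tfrac12\widehat\kappa^1(Z^A)^2/A$ then lies in $\bL^p(\Omega\times[0,T])$ for every $p>1$ by \eqref{eq:estim_ZA_bis}, exactly as in Corollary~\ref{lem:integrability_f_H_0}.

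Next I would set up the approximation. Mimicking \eqref{eq:def_F_delta_eps}, I regularize the singular linear coefficient $\kappa^3_t/(T-t)$ and the argument of the bracket (replacing $T-t$ by $T-t+\eps$ and shifting the inner $\psius$) to obtain a generator that is globally monotone in $h$ with growth bound $|F^{\widehat H,\delta,\eps}(t,h)|\le\Phi^\sharp_r(t)$ for $|h|\le r$, where $\Phi^\sharp_r\in\bL^p$ by \ref{C5_inte}; here, crucially, no exponential-moment weight is required (in contrast with Lemma~\ref{lem:existence_H_delta_eps}) precisely because there is no $z$. Then \cite[Proposition 5.24]{pard:rasc:14} yields a unique non-negative $\widehat H^{\delta,\eps}$, and the a priori bound $0\le\widehat H^{\delta,\eps}_t\le\vartheta(T-t)/(-\psius(t))$ is obtained as in Lemma~\ref{lem:a_priori_estim_H}: the process $\widehat Y^{\delta,\eps}=\phi(A)-\psius\widehat H^{\delta,\eps}$ is, after It\^o's formula, a supersolution of \eqref{eq:sing_BSDE} on each $[0,T-\theta]$, so Lemma~\ref{lem:gene_a_priori_estim} gives $\widehat Y^{\delta,\eps}\le\vartheta(T-\cdot)$.

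Finally I would pass to the monotone limit and recover minimality, following Proposition~\ref{prop:existence_H_gene} and Theorem~\ref{thm:exist_H}. By the comparison principle $\widehat H^{\delta,\eps}$ is monotone in $\delta$ and $\eps$, so the pointwise limits \eqref{eq:def_H_delta}--\eqref{eq:def_H} exist; the It\^o-on-differences estimate together with the trick of \cite{bria:dely:hu:03} shows $(\widehat H^{\delta,\eps},Z^{\widehat H,\delta,\eps})$ is Cauchy in $\bS^p(\tau,T-\theta)$, producing $(\widehat H,Z^{\widehat H})$ solving \eqref{eq:BSDE_H_general_case} on $[\tau,T)$, and on $[0,\tau]$ one invokes \cite[Proposition 5.24]{pard:rasc:14} with the bounded terminal value $\widehat H_\tau$. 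Integrability of $F^{\widehat H}(\cdot,\widehat H)$ on $\Omega\times[0,T]$, hence $Z^{\widehat H}\in\bH^1$ and $\widehat H_T=0$, follows from \ref{C5_inte}, the a priori estimate and $\lim_{y\to\infty}y/(-f(y))=0$ (\cite[Section 178]{hardy:08}), as in Proposition~\ref{prop:existence_sol_BSDE_sing_gen}; minimality is the linear-BSDE comparison argument of Theorem~\ref{thm:exist_H}, which here needs no change of measure. The main obstacle is thus not conceptual but bookkeeping: arranging the $\delta,\eps$-regularization of the singular term $\kappa^3/(T-t)$ so that both the approximate comparison structure and the final $\bL^1$-control of the generator near $T$ survive the limit — everything else is lighter than the full construction, since the absence of $z$-dependence removes the need for \ref{H_diff}, Girsanov, and the estimates on $\kappa^2$.
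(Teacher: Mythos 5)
Your proposal is correct and follows essentially the same route as the paper, which disposes of this proposition by stating that the construction of Section~\ref{ssect:constr_H} applies "with straightforward modifications"; you have simply spelled out those modifications (boundedness of $\widehat\kappa^1$ and $\kappa^3$ via Lemma~\ref{lem:bounded_cond} and \eqref{eq:increment_deriv_phi}, the $\delta,\eps$-regularization, the a priori bound through $\widehat Y^{\delta,\eps}$ and Lemma~\ref{lem:gene_a_priori_estim}, the monotone/Cauchy limit, and the comparison argument for minimality). Your observation that the absence of $z$-dependence removes the need for Girsanov, \ref{H_diff} and the $\kappa^2$-estimates is exactly the reason the paper can weaken the hypotheses to \ref{A1}--\ref{A3}, \ref{C2_tech} and \ref{C5_inte}.
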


Concerning uniqueness, from Proposition \ref{prop:sing_gene_uniqueness}, the minimal solution of Theorem \ref{thm:exist_H} is unique. However in the preceding Proposition, the process $(\widehat H,Z^{\widehat H})$ solves a BSDE with singular driver in the sense of \cite{jean:reve:14}. As mentioned in \cite[Proposition 3.1]{jean:reve:14}, uniqueness is not an obvious property for such kind of BSDEs. In our case assume that $(\widetilde H, \widetilde Z)$ be another non negative solution of the BSDE \eqref{eq:BSDE_H_general_case}. Then 
\begin{eqnarray*}
\Delta H _t  &= & \int_t^{T} \left[ f^{\widehat H}(s,\widetilde H_s) - f^{\widehat H}(s,\widehat H_s) \right] ds - \int_t^{T} (\widetilde Z_s - Z^{\widehat H}_s) dW_s \\
& = & \int_t^{T} \nu_s \Delta H_s ds- \int_t^{T} \Delta Z_s  dW_s .
\end{eqnarray*}
Hence we have a linear BSDE with singular generator with 
$$\nu_s =  \frac{\kappa^3_s}{T-s} + \frac{1}{-\psius(s) \eta_s } \left[ f(\phi(A_s) - \psius(s) \widetilde H_s) - f(\phi(A_s) - \psius(s) \widehat H_s)\right] \frac{1}{\Delta H_s} \mathbf{1}_{\Delta H_s \neq 0}.$$
Nevertheless we cannot apply the result in \cite[Propositions 3.1 and 3.5]{jean:reve:14}, since we don't know the sign of the drift $\nu$. The concavity assumption \ref{C1_concave} has a key role to obtain the sign and thus uniqueness. 

\begin{Rem}
In the proof of Proposition \ref{prop:existence_H_gene}, we can define $\displaystyle H^{\natural}_t = \lim_{\delta \downarrow 0} H^{\delta}_t$. Then $H_t \leq H^\natural_t$.  We can prove that $H^\sharp$ also satisfies the BSDE \eqref{eq:BSDE_H} (or the BSDE \eqref{eq:BSDE_H_general_case}). Since the BSDE \eqref{eq:BSDE_H} has a unique solution, $H=H^\natural$. However for BSDE \eqref{eq:BSDE_H_general_case}, it seems difficult to prove that these two processes are equal. In other words, as remarked above, we don't have any comparison or uniqueness result concerning the BSDE \eqref{eq:BSDE_H_general_case}.
\end{Rem}

\bibliographystyle{plain}
\bibliography{biblio}

\end{document}